\def\cW{\mathcal W}
\def\cE{\mathcal E}
\def\cF{\mathcal F}
\def\cG{\mathcal G}
\def\cP{\mathcal P}
\def\cL{\mathcal L}
\def\cM{\mathcal M}
\def\cN{\mathcal N}
\def\cQ{\mathcal Q}
\def\fm{\mathfrak m}
\def\voplus#1#2{\overset{#1}{\underset{#2}{\oplus}}}
\def\a{\alpha}
\def\b{\beta}
\def\fp{\mathfrak p}
\def\Hom{\operatorname{Hom}}
\def\cone{\operatorname{cone}}
\def\and{\, \text{ and } \,}
\newcommand{\xra}[1]{\xrightarrow{#1}}
\newcommand{\vp}[1]{\vspace{#1in}}
\newcommand{\hp}[1]{\hspace{#1in}}
\def\darrow#1#2{\xtofrom[#2]{#1}}
\numberwithin{equation}{section}
\theoremstyle{plain} 
\newtheorem{thm}[equation]{Theorem}
\newtheorem*{introthm*}{Theorem}
\newtheorem{cor}[equation]{Corollary}
\newtheorem{lem}[equation]{Lemma}
\newtheorem{prop}[equation]{Proposition}
\theoremstyle{definition}
\newtheorem{defn}[equation]{Definition}
\newtheorem{ex}[equation]{Example}
\theoremstyle{remark}
\newtheorem{rem}[equation]{Remark}
\begin{document}

\title{Chern character for matrix factorizations via Chern-Weil}

\author{Xuan Yu}

\maketitle

\begin{abstract}

Given a matrix factorization, we use the Atiyah class to give an algebraic Chern-Weil type construction to its Chern character; this allows us to realize the Chern character in an explicit way. It also generalizes the existing result to any smooth $k$-algebra $Q$ ($k$ a commutative ring containing $\mathbb{Q}$) and any $f\in Q$, which agrees with a recent result of Platt \cite{platt2012chern}. We also study some basic properties of the Chern character.

\end{abstract}

\section{Introduction}

Given a commutative ring $Q$ and an element $f\in Q$, a {\em matrix factorization} of $f$ is a finitely generated $\mathbb{Z}/2$-graded projective $Q$-module $M=M_0\oplus M_1$ together with an odd degree endomorphism $d_M$ such that $d_M^2=f\cdot 1_M$. Matrix factorizations were introduced by Eisenbud \cite{eisenbud1980homological} to study modules over the ring $Q/f$. 

The theory of matrix factorizations is very active in recent years and one of the things that's been heavily studied is the Chern character. Polishchuk-Vaintrob \cite{polishchuk2010chern} establishes a Chern character for matrix factorizations when $Q=k[[x_1, \cdots, x_n]]$ ($k$ is a field of characteristic $0$) and $f$ is an isolated singularity. Dyckerhoff-Murfet \cite{dyckerhoff2010kapustin} produce the same Chern character by an explicit description of a local duality isomorphism (also later in the frame work of pushing forward of matrix factorizations \cite{dyckerhoff2011pushing}). Carqueville-Murfet \cite{carqueville2012adjunctions} studies the bicategory of Landau-Ginzburg models and recover the Chern character in this setting. Recently, Platt \cite{platt2012chern} gives an explicit formula for the boundary bulk map; and in the case when the matrix factorization admits a connection, an explicit formula for the Chern character.

In this paper, we use the Atiyah class $At$ of matrix factorizations to give an algebraic Chern-Weil type construction to the Chern character. In order to do this, we need to require $At$ to be a strict morphism of matrix factorizations. Our observation is that $At$ is not a strict morphism; however, $id+At$ is. The Chern-Weil type construction allows us to generalize the Chern character to any finitely generated smooth $k$-algebra $Q$ ($k$ a commutative ring containing $\mathbb{Q}$) and any element $f$ of $Q$. Carqueville-Murfet \cite{carqueville2012adjunctions} defines and studies Atiyah classes in a more general setting, the one we use in this paper is the more elementary one that has already been given out in an earlier paper of Dyckerhoff-Murfet \cite{dyckerhoff2011pushing}. We also show that the Chern character is in fact a ring homomorphism from the Grothendieck ring of the homotopy category of matrix factorizations to its Hochschild homology (Corollary 5.18). As a byproduct, this construction gives a very concrete way of constructing a complex that can be used to calculate the Hochschild homology for matrix factorizations \cite{walker2013support}. We also prove the naive funtoriality (Proposition 5.21) at the end.

\smallskip

\smallskip

\emph{Convention} Throughout all rings are Noetherian, commutative and unital. All modules are finitely generated. All complexes are bounded.

\smallskip


\section{Matrix factorizations}

First we recall the theory of matrix factorizations. 

\begin{defn} Given a ring $Q$ and an element $f$ of $Q$, a {\em matrix factorization}  $\cM$ of $f\in Q$ is a
$\mathbb{Z}/2$-graded $Q$-module $M=M_0\oplus M_1$, where $M$ is a finitely generated
projective $Q$-module, together with an odd endomorphism $$ d = \begin{bmatrix} 0 & d_1 \\
d_0 & 0 \end{bmatrix}
$$ such that $d\circ d=f\cdot 1_M$.

\end{defn}

Equivalently, a matrix factorization for $(Q,f)$ consists of a pair
of finitely generated projective $Q$-modules $M_0$ and $M_1$ and $Q$-linear maps $d_0:
M_0 \rightarrow M_1$ and $d_1: M_1 \rightarrow M_0$ such that each
composition is multiplication by $f$: $$d_0\circ d_1=f\cdot 1_{M_1}  \, \text{ and } \,  d_1\circ d_0=f\cdot
1_{M_0}.$$

We write a matrix factorization as $$(M_1 \darrow{d_1}{d_0} M_0)
\text{ or } (M_1\xrightarrow{d_1} M_0\xrightarrow{d_0} M_1).$$ Note that the degree $1$ part is on the left for the first version. For the second version, we have the degree
$0$ piece in the middle and degree $1$ pieces elsewhere.

\begin{ex}

Let $Q=\mathbb{C}[[x]]$ and $f=x^n$, then we have matrix factorizations $(Q \darrow{x^i}{x^{n-i}} Q),$ for all $i$.

\end{ex}

\begin{ex} Given $Q=\mathbb{C}[[x,y,z]], f=xy+yz+zx$, then it's easy to check that $(Q^2 \darrow{d_1}{d_0} Q^2),$ with $d_1=\begin{bmatrix} z & y
\\x & -x-y \end{bmatrix}$ and $d_0=\begin{bmatrix} x+y & y \\x & -z
\end{bmatrix}$ is a matrix factorization of $f$. \end{ex}

\smallskip

\begin{defn}

A {\em strict morphism} of matrix factorizations from $\cM$ to $\cN$ is a $\mathbb{Z}/2$-graded $Q$-linear map of degree zero $\alpha: \cM\rightarrow \cN$ such that $d^{\cN} \circ \alpha = \alpha \circ d^{\cM}$

Equivalently, a strict morphism is a pair of $Q$-linear maps $\alpha_0:M_0\rightarrow N_0$ and $\alpha_1:M_1\rightarrow N_1$ such that the two evident squares commute.

\end{defn}

We write $MF(Q,f)$ for the category of all matrix factorizations of $(Q,f)$ with morphisms given by the set of strict morphisms. 

\begin{defn}
Two strict morphisms $\alpha, \beta: \cM \to \cN$ are {\em homotopic} if there exists morphisms $h_1 \in \Hom(M_1,N_0)$ and $h_0\in\Hom(M_0,N_1)$ such that $$d^{\cN} \circ h + h \circ d^{\cM} = \alpha - \beta.$$

\end{defn} 

We visual a homotopy as the following: \begin{displaymath}
    \xymatrixcolsep{3pc}\xymatrix{
        M_1 \ar[d]_{\alpha-\beta} \ar[r]^{d^{\cM}}  & M_0 \ar@{.>}[ld]^{h_0} \ar[d]^{\alpha-\beta} \ar[r]^{d^{\cM}}  & M_1  \ar@{.>}[ld]^{h_1} \ar[d]^{\alpha-\beta}  \\
        N_1 \ar[r]_{d^\cN} & N_0 \ar[r]_{d^\cN} & N_1 }
\end{displaymath}

This is an equivalent relation and is preserved by composition of strict morphism. The homotopy category $[MF(Q,f)]$ is obtained from $MF(Q,f)$ by modding out the hom sets by this equivalence relation.

A strict morphism $\alpha: \cM \to \cN$ that becomes an isomorphism in $[MF(Q,f)]$ is called a {\em homotopy equivalence}, i.e., $\alpha$ is a homotopy equivalence if and only if there exists a strict morphism $\beta: \cN \to \cM$ such that $\a \circ \b$ and $\b \circ \a$ are each homotopic to the appropriate identity map.

\begin{defn} 

For $\cM \in MF(Q,f)$, the {\em shift} $\cM[1] \in MF(Q,f) $ is defined to be: $$\left(M_1 \darrow{d_1}{d_0} M_0\right)[1] = \left(M_0\darrow{-d_0}{-d_1} M_1\right).$$

\end{defn}

\begin{defn} 

The {\em cone} of a strict morphism $\alpha: \cM \to \cN$ is the following matrix factorization of $(Q,f)$: $$\cone(\alpha) = \left( N_1\oplus M_0 \darrow{
\begin{bmatrix}
d^N_1 & \alpha_0 \\
0 & -d^M_0
\end{bmatrix}}
{
\begin{bmatrix}
d^N_0 & \alpha_1 \\
0 & -d^M_1
\end{bmatrix}
}  N_0\oplus M_1 \right).
$$ \end{defn}

There are canonical maps $$\cN \to \cone(\alpha) \and \cone(\alpha)\to \cM[1],$$ as the classic situation of chain complexes. These will give the ``distinguished triangles'' in the triangulated structure discussed in the next proposition.

\begin{prop} For any ring $Q$ and element $f \in Q$, the category $[MF(Q,f)]$ is a triangulated category. The shift functor is $M \mapsto M[1]$ and the distinguished triangles are those isomorphic (in $[MF(Q,f)]$) to triangles of the form $$\cM \xra{\alpha} \cN \xra{} \cone(\alpha) \xra{} \cM[1] $$ for any strict morphism $\alpha$.
\end{prop}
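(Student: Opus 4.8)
The plan is to verify the Verdier axioms directly by transporting the entire classical homotopy theory of chain complexes into this $\mathbb Z/2$-graded setting, where the role of the differential is played by the odd endomorphism $d$ satisfying $d^2 = f$. The key structural observation is that every construction we need — mapping cones, mapping cylinders, the rotation of triangles — is formally identical to the case of complexes over an abelian category; the only novelty is bookkeeping the $\mathbb Z/2$-grading and carrying the constraint $d^2 = f \cdot 1$ through each construction. So I would proceed axiom by axiom.

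First I would check the axioms concerning the shift and the existence of triangles. The shift $\cM \mapsto \cM[1]$ is visibly invertible on $[MF(Q,f)]$ (indeed $\cM[2] \cong \cM$ since negating the differentials twice returns the original), so it is an autoequivalence. For axiom (TR1), every $\a$ embeds in a triangle $\cM \xra{\a} \cN \to \cone(\a) \to \cM[1]$ by the stated construction, and one must check that $\cM \xra{1} \cM \to 0 \to \cM[1]$ is distinguished, which follows because $\cone(1_\cM)$ is contractible — its identity map is null-homotopic via the homotopy built from $d^\cM$ itself, exactly as for complexes. For (TR3), the completion-of-morphisms axiom, given a homotopy-commutative square I would define the induced map on cones by the usual block-matrix formula and verify it is a strict morphism (this uses the homotopy filling the square to correct the off-diagonal block), which is routine.

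The central work, and the step I expect to be the main obstacle, is the octahedral axiom (TR4) together with the rotation axiom (TR2). For rotation I must show that the triangle $\cN \to \cone(\a) \to \cM[1] \xra{-\a[1]} \cN[1]$ is again distinguished, i.e. isomorphic in $[MF(Q,f)]$ to the standard triangle on the map $\cone(\a) \to \cM[1]$. This requires producing an explicit homotopy equivalence between $\cone$ of the projection $\cone(\a)\to\cM[1]$ and $\cN[1]$, and checking that the comparison maps are compatible with the canonical maps up to homotopy. The sign conventions in the shift (the $-d$) are exactly what make these homotopies close up, and getting every sign consistent is the delicate bookkeeping. For the octahedral axiom I would take three composable-data matrix factorizations and build the standard octahedron from iterated cones, again mirroring the classical chain-level proof: the needed strict morphisms and their homotopy inverses are given by explicit block matrices, and the required triangle is produced as a cone of a cone.

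The honest approach, rather than reproving all of Verdier's axioms by hand, is to invoke the well-known principle that $[MF(Q,f)]$ is equivalent as a category-with-shift to the homotopy category of a Frobenius exact category (matrix factorizations form a Frobenius category whose projective-injective objects are the contractible ones), and the homotopy category of any Frobenius category is canonically triangulated by Happel's theorem, with cones computed exactly by the stated formula. So the cleanest proof is to (i) exhibit the Frobenius exact structure on $MF(Q,f)$, taking the admissible exact sequences to be the degreewise-split ones, (ii) identify the projective-injective objects with the contractible matrix factorizations and confirm there are enough of them, and (iii) check that the suspension and triangles produced by Happel's general construction coincide with the shift and cone given above. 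Under this route the main obstacle reduces to verifying the Frobenius property — that admissible exact sequences have enough projectives and injectives and that these two classes coincide — after which triangulatedness is automatic and the explicit cone formula is merely a computation identifying Happel's abstract cone with our block matrix.
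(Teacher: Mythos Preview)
The paper states this proposition without proof, treating it as a standard background result; there is no argument to compare against. Your proposal is correct in substance: both the direct verification of Verdier's axioms via the chain-complex analogy and the Frobenius-category route through Happel's theorem are the two standard ways this result is established in the literature, and either one would constitute a complete proof. Your identification of the octahedral axiom and the sign bookkeeping in (TR2) as the delicate points in the direct approach is accurate, and your sketch of the Frobenius structure (degreewise-split exact sequences, contractible objects as projective-injectives) is the right one.
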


\begin{defn} Given $f, f'\in Q$ and matrix factorizations
$\cM \in MF(Q,f)$ and $\cN \in MF(Q,f')$, the {\em tensor product} of
$\cM$ and $\cN$ is $$\cM \otimes_{mf} \cN:=((M_1\otimes_Q N_0)\oplus(M_0\otimes_Q N_1)
\darrow{d_{M\otimes N}}{d_{M\otimes N}} (M_0\otimes_Q
N_0)\oplus(M_1\otimes_Q N_1)),$$  where $d_{M\otimes N}(m\otimes n)=d_M(m)\otimes n + (-1)^{|m|}m\otimes d_N(n)$ for simple, homogeneous tensor $m\otimes n$. The tensor product is a matrix factorization of $f+f'$. For further details, see \cite{yoshino1998tensor}.

\end{defn}

The tensor product $-\otimes_{mf}-$ of matrix factorizations is well-defined on the homotopy category of matrix factorizations.

\begin{prop}(Lemma 2.2 of \cite{yoshino1998tensor})

Given any three matrix factorizations $\cM,\cN$ and $\cL$, we have $$(\cM\oplus \cN)\otimes_{mf} \cL\cong (\cM\otimes_{mf} \cL)\oplus(\cN\otimes_{mf} \cL) $$

\end{prop}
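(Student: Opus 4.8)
The plan is to prove this by unwinding the definitions and producing an explicit $\mathbb{Z}/2$-graded $Q$-linear isomorphism of the underlying modules, then checking that it is a \emph{strict} morphism of matrix factorizations (i.e. commutes with the differentials); since the underlying modules are built out of tensor products and direct sums, the isomorphism will be the canonical ``distribute and reorder'' map and everything else is bookkeeping. First I would write out the underlying graded modules of both sides. On the left, the degree-$0$ part of $(\cM\oplus\cN)\otimes_{mf}\cL$ is
$$\bigl((M_0\oplus N_0)\otimes_Q L_0\bigr)\oplus\bigl((M_1\oplus N_1)\otimes_Q L_1\bigr),$$
with the degree-$1$ part obtained by swapping the roles of the $L_i$. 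On the right, the degree-$0$ part of $(\cM\otimes_{mf}\cL)\oplus(\cN\otimes_{mf}\cL)$ is
$$(M_0\otimes_Q L_0)\oplus(M_1\otimes_Q L_1)\oplus(N_0\otimes_Q L_0)\oplus(N_1\otimes_Q L_1).$$

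Next I would invoke the standard natural isomorphism $(A\oplus B)\otimes_Q C\cong(A\otimes_Q C)\oplus(B\otimes_Q C)$, which identifies the left-hand module with the right-hand one up to a permutation of direct summands; call the resulting $Q$-linear graded isomorphism $\Phi$. This identification is entirely formal at the level of modules, and the same $\Phi$ works in both degrees, so $\Phi$ is a well-defined degree-zero $\mathbb{Z}/2$-graded map.

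The substantive step is verifying that $\Phi$ intertwines the two differentials, i.e. that $\Phi$ is a strict morphism. The key observation is that the differential of the matrix factorization $\cM\oplus\cN$ is \emph{block diagonal}: it acts as $d_M$ on the $M$-summands and as $d_N$ on the $N$-summands, with no cross terms. Feeding this into the tensor differential formula $d_{(M\oplus N)\otimes L}(z\otimes\ell)=d_{M\oplus N}(z)\otimes\ell+(-1)^{|z|}z\otimes d_L(\ell)$ and distributing over the direct sum, one sees that on the $M$-summand the output is exactly $d_M(m)\otimes\ell+(-1)^{|m|}m\otimes d_L(\ell)=d_{M\otimes L}(m\otimes\ell)$, and likewise on the $N$-summand, which is precisely the differential on $(\cM\otimes_{mf}\cL)\oplus(\cN\otimes_{mf}\cL)$ transported through $\Phi$.

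I expect the only mild obstacle to be keeping the sign $(-1)^{|z|}$ and the grading conventions consistent across the permutation of summands; because $\Phi$ preserves the $\mathbb{Z}/2$-grading and the signs depend only on that grading (not on which of $M$ or $N$ a homogeneous element lives in), the signs match summand by summand and no genuine difficulty arises. Finally, since $\Phi$ is natural in all three arguments, it descends to an isomorphism in $[MF(Q,f+f')]$ as well, so the statement holds in the homotopy category too.
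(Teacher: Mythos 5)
Your proof is correct. The paper itself offers no argument for this proposition --- it is simply quoted as Lemma 2.2 of Yoshino's paper on tensor products of matrix factorizations --- so there is nothing in-text to compare against; your direct verification, using the canonical distributivity isomorphism $(A\oplus B)\otimes_Q C\cong(A\otimes_Q C)\oplus(B\otimes_Q C)$ on underlying modules and the observation that $d_{\cM\oplus\cN}$ is block diagonal (so that the Koszul sign $(-1)^{|z|}$ depends only on the $\mathbb{Z}/2$-degree and is preserved summand by summand), is the standard and expected route and gives a genuine isomorphism already in $MF(Q,f+f')$, hence a fortiori in the homotopy category.
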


For a complex of $Q$-modules, we have the following definition.

\begin{defn}

Given any complex $C^{\cdot}$ of $Q$-modules we denote by $C^{\cdot}_{\mathbb{Z}/2}$
the $\mathbb{Z}/2${\em -folding}, which has $\bigoplus\limits_{i \in
2\mathbb{Z}} C^i$ in degree zero and $\bigoplus\limits_{i \in
2\mathbb{Z}+1} C^i$ in degree one, together with the obvious
differentials. This is a matrix factorization of $0$.

\end{defn}

\begin{rem} We can talk about tensor product (in the sense of definition 2.9) between complexes of projective $Q$-modules and matrix factorizations. If one of the factors in the tensor product is simply a projective $Q$-module, or more generally a complex of projective $Q$-modules, we first view it as a matrix factorization of zero using the $\mathbb{Z}/2$-folding (for the case of a single module, we follow the usual convention by placing it in the degree $0$ piece of a complex), then tensor everything as matrix factorizations, i.e., $P^{\cdot}\otimes M:=P^{\cdot}_{\mathbb{Z}/2}\otimes_{mf} M$ for $P^{\cdot}$ a complex. We have the following proposition addressing the
problem of compatibility.

\end{rem}

\begin{prop}

Given two complexes $X^{\cdot}$ and $Y^{\cdot}$ of projective $Q$-modules, we have
$(X^{\cdot}\otimes_{cx}Y^{\cdot})_{\mathbb{Z}/2}= X^{\cdot}_{\mathbb{Z}/2}\otimes_{mf}
Y^{\cdot}_{\mathbb{Z}/2}$, where $\otimes_{cx}$ stands for the usual tensor product of complexes.

\end{prop}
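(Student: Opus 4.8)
The plan is to prove the equality directly, by checking that both sides are the same matrix factorization of $0$: first I would identify the underlying $\mathbb{Z}/2$-graded $Q$-modules, and then I would verify that the two differentials coincide summand by summand. Since each construction yields a direct sum of the tensor products $X^p\otimes_Q Y^q$, the whole statement reduces to a bookkeeping comparison of gradings and Koszul signs, and I expect no genuine obstruction beyond keeping those signs straight.

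First I would compute the two graded modules. Write $M=X^{\cdot}_{\mathbb{Z}/2}$ and $N=Y^{\cdot}_{\mathbb{Z}/2}$, so that $M_0=\bigoplus_{p\text{ even}}X^p$, $M_1=\bigoplus_{p\text{ odd}}X^p$, and similarly for $N$. The folding of the complex tensor product has in degree zero the sum $\bigoplus_{p+q\text{ even}}X^p\otimes_Q Y^q$ and in degree one the sum $\bigoplus_{p+q\text{ odd}}X^p\otimes_Q Y^q$. Splitting each of these according to the parities of $p$ and $q$ separately, the degree-zero part is $\bigl(\bigoplus_{p,q\text{ even}}X^p\otimes_Q Y^q\bigr)\oplus\bigl(\bigoplus_{p,q\text{ odd}}X^p\otimes_Q Y^q\bigr)=(M_0\otimes_Q N_0)\oplus(M_1\otimes_Q N_1)$, while the degree-one part is $\bigl(\bigoplus_{p\text{ odd},q\text{ even}}\bigr)\oplus\bigl(\bigoplus_{p\text{ even},q\text{ odd}}\bigr)=(M_1\otimes_Q N_0)\oplus(M_0\otimes_Q N_1)$. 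These are exactly the degree-zero and degree-one pieces of $M\otimes_{mf}N$ in Definition 2.9, so the two $\mathbb{Z}/2$-graded modules are identified canonically by rearranging direct summands.

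Next I would compare the differentials. On a summand $X^p\otimes_Q Y^q$ the folded differential of the complex tensor product is $d_X\otimes 1+(-1)^p\,1\otimes d_Y$, the sign being the usual Koszul sign for the homological degree $p$. On the matrix-factorization side, Definition 2.9 gives $d_M(x)\otimes y+(-1)^{|x|}x\otimes d_N(y)$, where $|x|$ is the $\mathbb{Z}/2$-degree of $x\in X^p$ inside $M$. Since the folding places $X^p$ in $\mathbb{Z}/2$-degree $p\bmod 2$, we have $|x|\equiv p\pmod 2$, and because the sign depends only on the parity of the exponent, $(-1)^{|x|}=(-1)^p$. Moreover $d_M$ and $d_N$ restrict, by the definition of the folding, to the complex differentials $d_X$ and $d_Y$ on each graded piece. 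Hence the two differentials agree on every summand.

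The one point requiring care is precisely this sign bookkeeping in the last step: one must observe that the Koszul sign attached to an element is insensitive to replacing its $\mathbb{Z}$-degree by its residue modulo $2$, which is exactly what makes folding compatible with the two notions of tensor product. With the identification of graded modules and the matching of differentials both established, the equality of the two matrix factorizations follows.
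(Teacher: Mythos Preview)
Your proposal is correct and follows essentially the same approach as the paper: identify the underlying $\mathbb{Z}/2$-graded modules by splitting the direct sums according to parities of $p$ and $q$, then check that the differentials agree. In fact your treatment of the differentials is more explicit than the paper's, which simply asserts that the differentials match ``by carefully keeping track of where elements go'' without writing out the Koszul-sign comparison you give.
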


\begin{proof} For note that the underlying modules for $(X^{\cdot}\otimes_{cx} Y^{\cdot})_{\mathbb{Z}/2}$ 
and $X^{\cdot}_{\mathbb{Z}/2}\otimes_{mf} Y^{\cdot}_{\mathbb{Z}/2}$ are identical.

Indeed, we have $$((X^{\cdot}\otimes_{cx}Y^{\cdot})_{\mathbb{Z}/2})_1=\bigoplus\limits_{k
\text{ is odd }}(\bigoplus\limits_{i+j=k}(X^i\otimes
Y^j))$$ and $$(X_{\mathbb{Z}/2}\otimes_{mf}
Y_{\mathbb{Z}/2})_1=[(\bigoplus\limits_{i \text{ is
odd}}X^i)\otimes(\bigoplus\limits_{j \text{ is
even}}Y^j)]\bigoplus[(\bigoplus\limits_{i \text{ is
even}}X^i)\otimes (\bigoplus\limits_{j \text{ is
odd}}Y^j)]=\bigoplus\limits_{k \text{ is odd
}}(\bigoplus\limits_{i+j=k}(X^i\otimes Y^j)).$$ Similarly, $((X^{\cdot}\otimes_{cx} Y^{\cdot})_{\mathbb{Z}/2})_0=(X^{\cdot}_{\mathbb{Z}/2}\otimes_{mf} Y^{\cdot}_{\mathbb{Z}/2})_0.$

The fact that the differentials are the same can be seen by carefully keeping track of where elements go.

\end{proof}


\section{Algebraic Chern-Weil Theory}

We review the basic Chern-Weil theory from the algebraic point of view in this section, which will be used later in our construction. From now on, $Q$ is a finitely generated commutative $k$-algebra, where $k$ is a commutative ring. All modules are finitely generated. Also, let $\Omega_{Q/k}^1$ be the $Q$-module of differential $1$-forms and $\Omega_{Q/k}^n:=\wedge_Q^n\Omega_{Q/k}^1$, the $Q$-module of differential $n$-forms. For details, see \cite{loday1998cyclic}.

\begin{defn}
Let $Q$ be a commutative $k$-algebra and $E$ a $Q$-module. A {\em connection} on the $Q$-module $E$ is a $k$-linear map $\nabla: E\rightarrow \Omega_{Q/k}^1\otimes_Q E$ such that for any $e\in E$ and $q\in Q$ the following Leibniz rule holds: $$\nabla(qe)=(dq)\otimes e +q\nabla(e).$$

\end{defn}

Just like the exterior differential operator $d$, a connection $\nabla$ can be extended canonically to a map, which we still denote by $\nabla$, $$ \Omega_{Q/k}^{\bullet}\otimes_Q E \rightarrow \Omega_{Q/k}^{\bullet+1}\otimes_Q E$$ such that for any homogeneous element $u\in\Omega_{Q/k}^{\bullet}$ and $e\in E$ $$\nabla(u\otimes e)=(du)\otimes e+(-1)^{|u|}u\wedge\nabla(e).$$

\begin{ex}

For $E=Q$, the exterior differential operator $d$ is a connection. More generally, if $E=Q^r$, $$\Omega_{Q/k}^{\bullet}\otimes_Q E\cong (\Omega_{Q/k}^{\bullet})^r \, \text{ and } \, d\cdot I_r: (\Omega_{Q/k}^{\bullet})^r \rightarrow (\Omega_{Q/k}^{\bullet+1})^r$$ is a connection for $Q^r$.

\end{ex}

Every finitely generated projective module $E$ possesses a
connection. Given such an $E$, choose an idempotent $e$ in $M_r(Q)$ for some $r$ such that $E=Im(e)$. Then,
from the connection on $Q^r$ in the previous example, we get a
connection on $E$ as the following composition:
$$ \xymatrix{\Omega_{Q/k}^\bullet\otimes E \ar@{^{(}->}[r] & \Omega_{Q/k}^\bullet\otimes Q^r \ar[r]^{d\cdot I_r} & \Omega_{Q/k}^{\bullet+1}\otimes Q^r \ar[r]^{1\otimes e} & \Omega_{Q/k}^{\bullet+1}\otimes E }$$

\begin{defn}

This connection on $E=Im(e)$ is called the {\em Levi-Civita} connection by analogy with the classical situation in differential geometry.

\end{defn}

\begin{defn}

The {\em curvature} $R$ of a connection $\nabla$ on a finitely generated $Q$-module $E$ is defined to be $$R:=\nabla\circ\nabla: E\rightarrow \Omega_{Q/k}^2\otimes_{Q} E.$$

\end{defn}

It can be shown that $R$ is $Q$-linear.

\begin{prop} (Example 4.2.6 of \cite{huybrechts2005complex})

\begin{enumerate}

\item Let $E_1$ and $E_2$ be projective Q-modules with connections $\nabla_1$ and $\nabla_2$, respectively. Then for $e_1\in E_1$ and $e_2\in E_2$, we set 
$$\nabla(e_1\oplus e_2)=\nabla_1(e_1)\oplus \nabla_2(e_2).$$

This defines a natural connection on the direct sum $E_1\oplus E_2$.

\item In order to define a connection on the tensor product $E_1\otimes E_2$, one defines $$\nabla(e_1\otimes e_2)=\nabla_1(e_1)\otimes e_2+e_1\otimes\nabla_2(e_2).$$ Note that the second component naturally lands in $E_1\otimes_Q(\Omega_{Q/k}^1\otimes_Q E_2)$, so we need to apply an isomorphism $\tau: E_1\otimes_Q\Omega_{Q/k}^1\cong \Omega_{Q/k}^1\otimes_Q E_1$, which sends $e_1\otimes w$ to $w\otimes e_1$ to make it into an element of the target module $\Omega_{Q/k}^1\otimes_QE_1\otimes_QE_2$.

\end{enumerate}

\end{prop}

Before getting into the next proposition, we want to inform the reader that by $exp(R)$ we mean the series $1+R+\frac{R^2}{2!}+\frac{R^3}{3!}+\cdots+\frac{R^n}{n!}+\cdots.\in \prod_n End_Q(E)\otimes_Q\Omega_{Q/k}^{2n}.$ In order to do this, we need to make the extra assumption that $k\supseteq \mathbb{Q}$. The exterior operator $d$ can be extended to maps $\Omega_{Q/k}^n\rightarrow\Omega_{Q/k}^{n+1}$ (for any $n\in \mathbb{N}$) by $$d(a_0da_1\cdots da_n)=da_0da_1\cdots da_n.$$ Since $d(1)=0$ it is immediate that $d^2=0$, and the following sequence $$\xymatrix{ Q=\Omega_{Q/k}^0 \ar[r]^d & \Omega_{Q/k}^1 \ar[r]^d & \cdots \ar[r]^d & \Omega_{Q/k}^n \ar[r]^d & \cdots} $$ is a complex called the \em{de Rham complex} of $Q$ over $k$. The cohomology groups of the de Rham complex are
denoted $H_{DR}^n(Q)$ and are called the \em{de Rham cohomology} of $Q$ over $k$.

\begin{prop}(Proposition 8.1.6 of \cite{loday1998cyclic}) The homogeneous component of degree 2n of $ch(E,\nabla):=tr(exp(R))$ is a cycle in $\Omega^{2n}_{Q/k}$ (of the de Rham complex), where $tr$ stands for the trace map for projective modules (details in Section ).  

\end{prop}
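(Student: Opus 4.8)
The plan is to prove the stronger statement that the total form $\operatorname{tr}(\exp(R))$ is closed under the de Rham differential $d$, i.e. that $d\,\operatorname{tr}(\exp(R))=0$. Since $d$ raises form-degree by one while every homogeneous component of $\operatorname{tr}(\exp(R))$ sits in an even degree $\Omega^{2n}_{Q/k}$, the components of $d\,\operatorname{tr}(\exp(R))$ in distinct degrees cannot interfere; hence the single equation $d\,\operatorname{tr}(\exp(R))=0$ is equivalent to $d\bigl(\tfrac{1}{n!}\operatorname{tr}(R^n)\bigr)=0$ for every $n$, which is exactly the assertion that each degree-$2n$ component is a cycle.

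First I would package $R=\nabla\circ\nabla$ as an element of $\End_Q(E)\otimes_Q\Omega^2_{Q/k}$ (this is the $Q$-linearity remarked on right after the definition of the curvature) and introduce the \emph{induced connection} on $\End(E)$. Regarding any $\phi\in\End_Q(E)\otimes_Q\Omega^p_{Q/k}$ as an $\Omega^\bullet_{Q/k}$-linear operator of degree $p$ on $\Omega^\bullet_{Q/k}\otimes_Q E$, I set $\nabla^{\End}(\phi):=\nabla\circ\phi-(-1)^p\,\phi\circ\nabla$, the graded commutator with $\nabla$. Because $\nabla$ is a first-order operator while $\phi$ is $\Omega$-linear, this commutator is again $\Omega$-linear, so $\nabla^{\End}(\phi)\in\End_Q(E)\otimes_Q\Omega^{p+1}_{Q/k}$, and $\nabla^{\End}$ is a genuine connection, in particular a graded derivation for the composition product on endomorphism-valued forms. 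The key structural input is the \emph{Bianchi identity}: since $R$ has even degree,
\[ \nabla^{\End}(R)=\nabla\circ R-R\circ\nabla=\nabla\circ\nabla^2-\nabla^2\circ\nabla=\nabla^3-\nabla^3=0. \]

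The heart of the argument, and the step I expect to be the main obstacle, is the compatibility of the trace with the two differentials:
\[ d\bigl(\operatorname{tr}(\phi)\bigr)=\operatorname{tr}\bigl(\nabla^{\End}(\phi)\bigr)\qquad\text{for all }\phi\in\End_Q(E)\otimes_Q\Omega^\bullet_{Q/k}. \]
I would establish this in three moves. For a free module $E=Q^r$ with the componentwise exterior differential $\nabla=d\cdot I_r$ it is a direct matrix computation: $\nabla^{\End}(\phi)$ has entries $d\phi_{ij}$, so $\operatorname{tr}(\nabla^{\End}\phi)=\sum_i d\phi_{ii}=d\,\operatorname{tr}(\phi)$. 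Next, the identity is independent of the chosen connection: any two connections differ by some $A\in\End_Q(E)\otimes_Q\Omega^1_{Q/k}$, changing $\nabla^{\End}(\phi)$ by the graded commutator $[A,\phi]$, and $\operatorname{tr}$ annihilates graded commutators, while the left-hand side $d\,\operatorname{tr}(\phi)$ does not involve the connection at all. Finally, for a general projective $E$ I would realize it as $E=\operatorname{im}(e)$ for an idempotent $e\in M_r(Q)$, equip it with the Levi-Civita connection, and relate $\operatorname{tr}_E$ to $\operatorname{tr}_{Q^r}$ through $e$, thereby reducing the projective case to the free case already settled; the care here lies precisely in checking that the trace defined for projective modules via $\End_Q(E)\cong E\otimes_Q E^\vee\to Q$ is compatible with this reduction.

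With these two ingredients in hand the conclusion is immediate. Using trace compatibility and then the derivation property of $\nabla^{\End}$,
\[ d\,\operatorname{tr}(\exp(R))=\operatorname{tr}\bigl(\nabla^{\End}(\exp(R))\bigr)=\operatorname{tr}\Bigl(\sum_{n\ge1}\tfrac{1}{n!}\sum_{j=0}^{n-1}R^{j}\,\nabla^{\End}(R)\,R^{\,n-1-j}\Bigr), \]
and every summand contains the factor $\nabla^{\End}(R)=0$ by Bianchi, so the whole expression vanishes. As explained in the first paragraph, this forces each $\tfrac{1}{n!}\operatorname{tr}(R^n)\in\Omega^{2n}_{Q/k}$ to be a de Rham cycle, which is the claim.
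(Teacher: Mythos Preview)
Your argument is correct and is the standard Chern--Weil proof: Bianchi identity $\nabla^{\End}(R)=0$, compatibility $d\circ\operatorname{tr}=\operatorname{tr}\circ\nabla^{\End}$, and then the derivation property of $\nabla^{\End}$ applied to $\exp(R)$. There is nothing to compare against here, however: the paper does not supply its own proof of this proposition but simply quotes it as Proposition~8.1.6 of Loday's \emph{Cyclic Homology}, along with the two results that follow it. Your proof is essentially the one found in that reference, so in effect you have reproduced the cited argument rather than diverged from anything in the paper.
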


This proposition implies that $ch(E,\nabla)$ defines a cohomology class in the de Rham cohomology of $Q$.

\begin{thm} (Theorem-Definition 8.1.7 of \cite{loday1998cyclic})
The cohomology class of $ch(E,\nabla):=tr(exp(R))$ is independent of the connection $\nabla$ and defines an element $$ch(E)\in \prod\limits_{n\geqslant 0} H^{2n}_{DR}(Q)$$ which is called the Chern character of the finitely generated projective $Q$-module $E$.

\end{thm}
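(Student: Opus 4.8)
The plan is to take the closedness of each homogeneous component of $ch(E,\nabla)$ as given (the preceding proposition) and to prove only that its de Rham class is independent of the choice of connection; the element $ch(E)\in\prod_{n\geqslant 0}H^{2n}_{DR}(Q)$ is then the resulting well-defined class. The entire argument is the algebraic transgression (Chern--Weil) computation. First I would fix two connections $\nabla_0,\nabla_1$ on $E$ and observe that their difference $\theta:=\nabla_1-\nabla_0$ is $Q$-linear, since the Leibniz terms cancel; thus $\theta\in\Omega^1_{Q/k}\otimes_Q\End_Q(E)$. To interpolate algebraically I would pass to the polynomial ring $Q[t]$, form $E[t]:=E\otimes_Q Q[t]$, and consider the connection $\nabla_t:=\nabla_0+t\theta$, with $t$-derivative $\dot\nabla_t=\theta$ and curvature $R_t:=\nabla_t\circ\nabla_t$.

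Before differentiating I would record the two standard inputs, both consequences of the graded Leibniz rule for $\nabla_t$ and the cyclic invariance of $tr$: (i) the fundamental identity $d\,tr(\alpha)=tr(\nabla_t\alpha)$ for every endomorphism-valued form $\alpha$, which says that $tr$ is a chain map from $(\Omega^{\bullet}_{Q/k}\otimes_Q\End_Q(E),\nabla_t)$ to the de Rham complex; and (ii) the Bianchi identity $\nabla_t R_t=0$, whence $\nabla_t\,exp(R_t)=0$ as $R_t$ is even.

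Next I would differentiate in $t$. Since $\frac{d}{dt}R_t=\dot\nabla_t\circ\nabla_t+\nabla_t\circ\dot\nabla_t=\nabla_t\theta+\theta\nabla_t$, which is exactly the action of the induced connection on the degree-one endomorphism form $\theta$, we get $\dot R_t=\nabla_t\theta$. Using cyclicity of the trace and $\mathbb{Q}\subseteq k$ to divide by the factorials, term-by-term differentiation of $exp(R_t)=\sum_n R_t^n/n!$ gives $\frac{d}{dt}\,tr(exp(R_t))=tr(exp(R_t)\,\dot R_t)$. Combining with (ii) in the form $exp(R_t)\,\dot R_t=\nabla_t\big(exp(R_t)\,\theta\big)$ and then applying (i),
$$\frac{d}{dt}\,tr(exp(R_t))=tr\big(\nabla_t(exp(R_t)\theta)\big)=d\,tr\big(exp(R_t)\,\theta\big).$$
Integrating in $t$ from $0$ to $1$ — again using $\mathbb{Q}\subseteq k$, since $\int_0^1 t^m\,dt=\tfrac1{m+1}$ makes sense as formal integration of polynomials in $t$ — yields $ch(E,\nabla_1)-ch(E,\nabla_0)=d\big(\int_0^1 tr(exp(R_t)\theta)\,dt\big)$, an exact form. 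Hence the two classes agree in each degree $2n$, proving independence.

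The main obstacle is the rigorous algebraic meaning of the $t$-family and of its ``integration'': unlike the smooth setting there is no analytic integral, so I would realize the transgression inside the de Rham complex of $Q[t]$, extracting the $dt$-component of the curvature of the combined connection $\nabla_0+t\theta+dt\wedge\partial_t$ on $E[t]$ over $Q[t]/k$ and applying the polynomial homotopy operator $\int_0^1(-)\,dt$. Equivalently, one invokes homotopy invariance $H^{\bullet}_{DR}(Q[t])\cong H^{\bullet}_{DR}(Q)$: the two evaluations $t=0,1$ induce the same map on de Rham cohomology, and $ch(E[t],\nabla_t)$ restricts to $ch(E,\nabla_i)$ at $t=i$. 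This homotopy invariance is precisely the algebraic fact requiring $k\supseteq\mathbb{Q}$, and it is the step where care is needed.
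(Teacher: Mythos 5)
Your transgression argument is correct and complete in its essentials: the paper itself gives no proof of this statement, quoting it directly as Theorem--Definition 8.1.7 of Loday's \emph{Cyclic Homology}, and the argument you give (linearity of $\theta=\nabla_1-\nabla_0$, the interpolated connection $\nabla_t$ over $Q[t]$, the identities $d\,tr(\alpha)=tr([\nabla_t,\alpha])$ and $\nabla_t R_t=0$, and formal integration of the resulting exact term using $k\supseteq\mathbb{Q}$) is precisely the standard Chern--Weil proof found in that reference. No gaps; your closing remark correctly isolates the only delicate point, namely that the ``integration'' is the polynomial homotopy operator on $Q[t]$ rather than an analytic integral.
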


\begin{thm} (Theorem 8.2.4 of \cite{loday1998cyclic})

The Chern character induces a ring homomorphism $ch: K_0(Q) \rightarrow H^{even}_{DR}(Q).$

\end{thm}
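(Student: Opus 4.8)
The plan is to establish three facts about the assignment $E \mapsto ch(E)$ on finitely generated projective $Q$-modules, leveraging the connection-independence from the preceding Theorem-Definition throughout. Recall that $K_0(Q)$ is the Grothendieck group of finitely generated projective $Q$-modules modulo the relations $[E_1\oplus E_2]=[E_1]+[E_2]$, with ring multiplication induced by $\otimes_Q$ and unit $[Q]$. Thus I must verify: (i) additivity $ch(E_1\oplus E_2)=ch(E_1)+ch(E_2)$, so that $ch$ descends to a well-defined group homomorphism on $K_0(Q)$; (ii) the unit condition $ch(Q)=1\in H^0_{DR}(Q)$; and (iii) multiplicativity $ch(E_1\otimes_Q E_2)=ch(E_1)\cdot ch(E_2)$, where the product on the right is the cup product (wedge of forms) in $H^{even}_{DR}(Q)$. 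Since $ch(E)$ is independent of the chosen connection, in each case I am free to compute with whichever connection is most convenient.

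For additivity, I would equip $E_1\oplus E_2$ with the direct-sum connection $\nabla_1\oplus\nabla_2$ from the Proposition on connections for $\oplus$ and $\otimes$. Its curvature is block-diagonal, $R=R_1\oplus R_2$, so $exp(R)=exp(R_1)\oplus exp(R_2)$ is again block-diagonal, and additivity of the trace gives $tr(exp(R))=tr(exp(R_1))+tr(exp(R_2))$. For the unit, take $E=Q$ with the connection $d$; then $R=d^2=0$, so $exp(R)=\text{id}$ and $ch(Q)=tr(\text{id})=1$, landing in $H^0_{DR}(Q)$.

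The substance of the proof is multiplicativity. I would equip $E_1\otimes_Q E_2$ with the tensor-product connection $\nabla=\nabla_1\otimes 1+1\otimes\nabla_2$ from the same Proposition. The first step is to compute $\nabla\circ\nabla$ and check that the mixed first-order terms cancel, leaving the curvature $R=R_1\otimes 1+1\otimes R_2$ as an endomorphism of $E_1\otimes_Q E_2$ valued in $\Omega^2_{Q/k}$. Because $R_1$ and $R_2$ are two-forms (even degree in the exterior part), the two summands commute with no sign, so $exp(R)=exp(R_1\otimes 1)\cdot exp(1\otimes R_2)$, which I identify with $exp(R_1)\otimes exp(R_2)$ under the wedge pairing of forms. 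Taking traces and using that the trace of a tensor product of endomorphisms is the product of traces, I obtain $tr(exp(R))=tr(exp(R_1))\wedge tr(exp(R_2))$, that is $ch(E_1\otimes_Q E_2)=ch(E_1)\cdot ch(E_2)$.

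The main obstacle is the curvature computation for the tensor-product connection: one must carefully apply the extended Leibniz rule to $\nabla\circ\nabla$ and track the transposition isomorphism $\tau$ so that the cross terms cancel and only $R_1\otimes 1+1\otimes R_2$ survives, together with the sign bookkeeping needed to factor $exp(R)$ and to move the trace past the wedge product of forms. Everything else reduces to the already-established connection-independence and elementary properties of the trace.
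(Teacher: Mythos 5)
The paper gives no proof of this statement: it is quoted verbatim from Loday (Theorem 8.2.4 of \emph{Cyclic Homology}) and used as a black box. Your sketch --- additivity via the block-diagonal connection on $E_1\oplus E_2$, the unit via $(Q,d)$ with zero curvature, and multiplicativity via the tensor-product connection with curvature $R_1\otimes 1+1\otimes R_2$ and the identity $tr(\exp(R_1)\otimes\exp(R_2))=tr(\exp(R_1))\wedge tr(\exp(R_2))$ --- is exactly the standard Chern--Weil argument given in that reference, and it is correct, including your identification of the cancellation of cross terms in $\nabla\circ\nabla$ as the one computation requiring care.
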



\smallskip

\section{Main constructions}

Given a $k$-algebra $Q$, $k$ a Noetherian commutative ring, for a matrix factorization $\cE=(E_1\xrightarrow{A} E_0\xrightarrow{B} E_1)$ of $f\in Q$ (so the odd endomorphism of this matrix factorization is $d=\begin{bmatrix} 0 & A \\ B & 0 \end{bmatrix}$), choose connections $\nabla_i:E_i\rightarrow \Omega^1_{Q/k}\otimes_Q E_i$ for
$i=0,1$. By Proposition 3.5, $\nabla_0$ and $\nabla_1$ induce a natural connection for the underlying module $E=E_0\oplus E_1$ of the form $$ \nabla = \begin{bmatrix} \nabla_0 & 0 \\ 0 & \nabla_1 \end{bmatrix}.$$

\begin{defn} (\cite{dyckerhoff2011pushing}) The {\em Atiyah class} of $\mathcal{E}$, written $At_{\mathcal{E}, \nabla}$ (or simply
just $At$ if there is no confusion), is the map $$\nabla\circ d - (1\otimes d)\circ \nabla =^{def}At_{\mathcal{E}, \nabla}:  \mathcal{E}\rightarrow
\Omega^1[1]\otimes_{mf} \mathcal{E}.$$ As usual, we regard the single module $\Omega^1$ as a complex with $\Omega^1$ in the degree $0$ piece, so $\Omega^1[1]$ is the shift of this complex. Since we will take the $\mathbb{Z}/2$-folding while tensoring a complex with a matrix factorization, $\Omega^1[1]$ is really the matrix factorization $(\Omega^1\darrow{}{} 0)$ (degree $1$ piece on the left). See Remark 2.12 for details of the tensor product of a module and a matrix factorization.

\end{defn} 

It's easy to check that the Atiyah class is a $Q$-module homomorphism from the $\mathbb{Z}/2$-graded $Q$-module $E$ to the $\mathbb{Z}/2$-graded $Q$-module $\Omega^1\otimes_Q E$.

Compositions of Atiyah classes are defined in the following way. For example, by definition, we have $(1\otimes At)\circ At : \mathcal{E}\rightarrow \Omega^1[1] \otimes_{mf} \mathcal{E}\rightarrow \Omega^1[1]\otimes_{mf}\Omega^1[1]\otimes_{mf} \mathcal{E}$, which is defined as $$((1\otimes\nabla)\circ(1\otimes d) - (1\otimes 1\otimes d)\circ(1\otimes\nabla))\circ(\nabla\circ d - (1\otimes d)\circ\nabla).$$ For simplicity, we denote this composition by $\widetilde{At^2}$. Similarly, we can define $\widetilde{At^i}$ (for nature numbers $i\geqslant 2$) recursively by $$\widetilde{At^i}:=(1_{\underbrace{\Omega^1\otimes\cdots\Omega^1}_{i-1}}\otimes At)\circ \widetilde{At^{i-1}}.$$ Hence the
map $$\widetilde{At^i}: \mathcal{E}\rightarrow \overbrace{\Omega^1[1]\otimes_{mf}\cdots\otimes_{mf}\Omega^1[1]}^i\otimes_{mf} \mathcal{E}$$ has $i$ copies of $\Omega^1[1]$
in the target.

\begin{defn}: Define $At^i$ to be the composition: 

$$\mathcal{E} \xrightarrow{\widetilde{At^i}} \overbrace{\Omega^1[1]\otimes_{mf}\cdots\otimes_{mf}\Omega^1[1]}^i\otimes_{mf} \mathcal{E} \xrightarrow{\wedge} \Omega^i[i]\otimes_{mf} \mathcal{E}.$$

\end{defn}

Note that we have $At=At^1=\widetilde{At^1}$.

\smallskip

\subsection{Basic construction: the strict morphism $\varphi$}\hp{1}

\begin{defn}

Define $\mathcal{E}^{(1)} = (\xymatrix{ Q \ar[r]^{df\wedge} & \Omega^{1}})\otimes_{mf} \mathcal{E}$, with $Q$ in degree $0$ and $\Omega^1$ in degree $1$.

\end{defn}

Explicitly, $\mathcal{E}^{(1)}$ is by definition the following: $$\mathcal{E}^{(1)} = (\xymatrix{\voplus{E_1}{\Omega^{1}\otimes E_0}\ar[r]^{\overline{A}} & \voplus{E_0}{\Omega^{1}\otimes E_1} \ar[r]^{\overline{B}} & \voplus{E_1}{\Omega^{1}\otimes E_0} })$$ with $\overline{A} = \begin{bmatrix} A & 0 \\
df\wedge & -B \end{bmatrix} $ and $ \overline{B} = \begin{bmatrix} B & 0 \\ df\wedge & -A \end{bmatrix} $. For details, see Proposition 2.13. (Note that the $\mathbb{Z}/2$-folding of $(\xymatrix{ Q \ar[r]^{df\wedge} & \Omega^{1}})$ is the matrix factorization $(\Omega^1\darrow{0}{df\wedge} Q)$).

Consider the following diagram (commutativity will be checked below in Proposition 4.5) 

\begin{displaymath}
    \xymatrixcolsep{5pc}\xymatrix{
        E_1 \ar[d]_{\varphi_1} \ar[r]^A  & E_0 \ar[d]^{\varphi_0} \ar[r]^{B}  & E_1 \ar[d]^{\varphi_1}  \\
        \voplus{E_1}{\Omega^{1}\otimes E_0} \ar[r]^{\overline{A}} & \voplus{E_0}{\Omega^{1}\otimes E_1} \ar[r]^{\overline{B}} & \voplus{E_1}{\Omega^{1}\otimes E_0} }
\end{displaymath}

$$\text{where} \hp{0.5} \varphi_1 = \begin{bmatrix} 1 \\ 
\nabla_0 A-(1\otimes A)\nabla_1 \end{bmatrix}, \hp{0.3} \varphi_0 =
\begin{bmatrix} 1 \\ \nabla_1B-(1\otimes B)\nabla_0 \end{bmatrix}.$$

\smallskip

We can make the following definition:

\begin{defn}

Define $\varphi_{\mathcal{E},\nabla}: \mathcal{E}\rightarrow \mathcal{E}^{(1)}$ to be the morphism $\begin{bmatrix} 1 \\ At_{\mathcal{E}} \end{bmatrix}$.

\end{defn}

\begin{prop} $\varphi_{\mathcal{E},\nabla}$ is a strict morphism of matrix factorizations. \end{prop}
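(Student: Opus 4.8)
The plan is to check directly that $\varphi_{\mathcal{E},\nabla}$ intertwines the two differentials. Since $\varphi$ is by construction a degree-zero $\mathbb{Z}/2$-graded $Q$-linear map (block diagonal with components $\varphi_0,\varphi_1$), the definition of a strict morphism (Definition 2.4) reduces the claim to the commutativity of the two squares in the diagram preceding Definition 4.4, i.e.
$$\overline{A}\circ\varphi_1 = \varphi_0\circ A \and \overline{B}\circ\varphi_0 = \varphi_1\circ B.$$
I would verify these by a direct block-matrix computation and observe that each collapses, after the obvious cancellations, to the Leibniz rule for the chosen connections. First I would record, by expanding $At_{\mathcal{E}}=\nabla\circ d-(1\otimes d)\circ\nabla$ (Definition 4.1) in block form with respect to $E=E_0\oplus E_1$, that its $E_1\to\Omega^1\otimes E_0$ component equals $\nabla_0 A-(1\otimes A)\nabla_1$ and its $E_0\to\Omega^1\otimes E_1$ component equals $\nabla_1 B-(1\otimes B)\nabla_0$; this is precisely what places $\varphi_1,\varphi_0$ in the form stated just after the diagram.

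For the first square I would multiply $\overline{A}=\begin{bmatrix} A & 0 \\ df\wedge & -(1\otimes B)\end{bmatrix}$ against $\varphi_1=\begin{bmatrix} 1 \\ \nabla_0 A-(1\otimes A)\nabla_1\end{bmatrix}$ and compare with $\varphi_0\circ A$ (here the symbol $B$ in the lower-right block of $\overline{A}$ acts as $1\otimes B$ on the $\Omega^1$-factor). The top entries agree trivially, $A=A$, and in the bottom entry the terms $-(1\otimes B)\nabla_0 A$ occur on both sides and cancel; using the matrix-factorization relation $B\circ A=f\cdot 1_{E_1}$ to rewrite $(1\otimes B)(1\otimes A)=1\otimes(BA)=f\cdot 1$, the surviving identity is
$$df\wedge e + f\,\nabla_1(e)=\nabla_1(f e)\qquad\text{for }e\in E_1,$$
which is exactly the Leibniz rule for $\nabla_1$. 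The second square is entirely symmetric: expanding $\overline{B}\circ\varphi_0$ and comparing with $\varphi_1\circ B$, the top entries give $B=B$, the terms $-(1\otimes A)\nabla_1 B$ cancel, and the relation $A\circ B=f\cdot 1_{E_0}$ reduces the claim to the Leibniz rule $df\wedge e + f\,\nabla_0(e)=\nabla_0(fe)$ for $\nabla_0$.

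The computation itself is routine block bookkeeping; the only real content worth highlighting is the role of the off-diagonal entries $df\wedge$ in $\overline{A}$ and $\overline{B}$. The Atiyah class measures exactly the failure of $\nabla$ to commute with multiplication by $f$, and on each summand that failure is the map $df\wedge$; the target $\mathcal{E}^{(1)}$ is built as the tensor product with $(\,Q\xrightarrow{df\wedge}\Omega^1)$ precisely so that these $df\wedge$ terms absorb the discrepancy. Thus the only apparent obstacle — that, as noted in the introduction, $At$ by itself is not a strict morphism — is resolved automatically once it is bundled with the identity into $\varphi=\begin{bmatrix}1\\ At\end{bmatrix}$, and no genuine difficulty remains beyond keeping the blocks straight.
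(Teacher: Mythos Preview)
Your proof is correct and follows essentially the same approach as the paper: both check the two squares directly, note that the top entries agree trivially, and reduce the bottom entries to the Leibniz rule for $\nabla_1$ (resp.\ $\nabla_0$) via the relation $BA=f\cdot 1_{E_1}$ (resp.\ $AB=f\cdot 1_{E_0}$). The only cosmetic difference is that the paper tracks elements $x\in E_1$ rather than writing the block-matrix product, but the computation is identical.
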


\begin{proof} First, let's check the commutativity for the square on the left; that is $\overline{A}\circ\varphi_1(x)=\varphi_0\circ A(x)$ for any $x\in E_1$. It's enough to look at the second component. Let $\pi_2$ be the projection to the second component, then

\vp{0.1}

\hp{1.6}$\pi_2\circ \overline{A}\circ\varphi_1(x)=df\wedge
x-B(\nabla_{0}A-A\nabla_{1})(x)$

\vp{0.1}

\hp{2.5}$=df\wedge x-B\nabla_{0}A(x)+f\nabla_{1}(x)$

\vp{0.1}

\hp{2.5}$=df\wedge x+f\nabla_{1}(x)-B\nabla_{0}A(x)$

\vp{0.1}

\hp{2.5}$=\nabla_{1}(f\cdot x)-B\nabla_{0}A(x)$

\vp{0.1}

\hp{2.5}$=(\nabla_{1}B-B\nabla_{0})(A(x))$

\vp{0.1}

\hp{2.5}$=\pi_2\circ\varphi_0\circ A(x)$

The commutativity of the right square can be proved in a similar way. Therefore $\varphi_{\mathcal{E},\nabla}$ is a strict morphism of matrix factorizations. \end{proof}

\begin{prop}

$\varphi_{\mathcal{E},\nabla}$ is independent of the choice of connections up to homotopy.

\end{prop}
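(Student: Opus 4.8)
The plan is to show that two connections $\nabla,\nabla'$ on $E$ produce homotopic strict morphisms $\varphi_{\cE,\nabla},\varphi_{\cE,\nabla'}\colon\cE\to\cE^{(1)}$ by writing down an explicit homotopy built from the difference of the connections. The key preliminary observation is the standard fact that, although a single connection is only $k$-linear, the difference $\theta:=\nabla-\nabla'$ is $Q$-linear: subtracting the two Leibniz rules cancels the $(dq)\otimes e$ term, so $\theta\in\Hom_Q(E,\Omega^1\otimes_Q E)$. Since each of $\nabla,\nabla'$ respects the $\Z/2$-grading (both are block diagonal $\begin{bmatrix}\nabla_0 & 0\\ 0 & \nabla_1\end{bmatrix}$), so is $\theta=\begin{bmatrix}\theta_0 & 0\\ 0 & \theta_1\end{bmatrix}$ with $\theta_i=\nabla_i-\nabla_i'\in\Hom_Q(E_i,\Omega^1\otimes_Q E_i)$.

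Next I would compute the difference of the Atiyah classes. Because $At_{\cE,\nabla}=\nabla\circ d-(1\otimes d)\circ\nabla$ is affine-linear in the connection, subtracting the two formulas gives $At_{\cE,\nabla}-At_{\cE,\nabla'}=\theta\circ d-(1\otimes d)\circ\theta$, where $d$ is the differential of $\cE$. Hence $\varphi_{\cE,\nabla}-\varphi_{\cE,\nabla'}=\begin{bmatrix}0\\ \theta\circ d-(1\otimes d)\circ\theta\end{bmatrix}$, a strict morphism landing entirely in the $\Omega^1\otimes E$ summand of $\cE^{(1)}$. The fact that this difference is the graded commutator of the $Q$-linear operator $\theta$ with the differentials is exactly what suggests that $\theta$ itself should serve as a contracting homotopy.

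I would then define the homotopy $h\colon\cE\to\cE^{(1)}$ to be the odd $Q$-linear map $h=\begin{bmatrix}0\\ \theta\end{bmatrix}$, that is $h_1=\begin{bmatrix}0\\ \theta_1\end{bmatrix}\colon E_1\to E_0\oplus(\Omega^1\otimes E_1)$ and $h_0=\begin{bmatrix}0\\ \theta_0\end{bmatrix}\colon E_0\to E_1\oplus(\Omega^1\otimes E_0)$, sending $e$ into the (degree-shifted) $\Omega^1\otimes E$ part of $\cE^{(1)}$. It then remains to verify the homotopy identity $d^{\cE^{(1)}}\circ h+h\circ d^{\cE}=\varphi_{\cE,\nabla}-\varphi_{\cE,\nabla'}$, which unwinds into the two component equations $\overline{B}\,h_1+h_0\,A=(\varphi_{\cE,\nabla}-\varphi_{\cE,\nabla'})_1$ and $\overline{A}\,h_0+h_1\,B=(\varphi_{\cE,\nabla}-\varphi_{\cE,\nabla'})_0$. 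Using the explicit matrices $\overline{A}=\begin{bmatrix}A & 0\\ df\wedge & -B\end{bmatrix}$ and $\overline{B}=\begin{bmatrix}B & 0\\ df\wedge & -A\end{bmatrix}$ (with the lower-right entries acting as $1\otimes B$ and $1\otimes A$), the vanishing first component of $h$ kills the $df\wedge$ entries, and the surviving second components reproduce exactly $\theta_0 A-(1\otimes A)\theta_1$ and $\theta_1 B-(1\otimes B)\theta_0$, i.e. the two components of $\theta\circ d-(1\otimes d)\circ\theta$, as required.

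I expect the only genuinely delicate point to be the first step: the $Q$-linearity of $\theta$, since this is what makes $h$ a legitimate homotopy in $MF(Q,f)$ (a map of $Q$-modules) rather than merely a $k$-linear map. Everything afterward is bookkeeping whose success rests on the two features of $\overline{A},\overline{B}$ just noted—the $df\wedge$ terms annihilating the zero first slot of $h$, and the signs $-B,-A$ in the lower-right corners yielding the correct $(1\otimes B),(1\otimes A)$ contributions. Organizing the computation around the observation that the difference of Atiyah classes is the graded commutator $[\theta,d]$, tautologically contracted by $\theta$, keeps the sign tracking transparent.
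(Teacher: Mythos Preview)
Your proposal is correct and is essentially identical to the paper's own proof: the paper also observes that $\nabla_i-\nabla_i'$ is $Q$-linear, defines the homotopy by $\alpha_i=\begin{bmatrix}0\\ \nabla_i-\nabla_i'\end{bmatrix}$ (your $h_i$), and checks $\overline{A}\circ\alpha_0+\alpha_1\circ B=\varphi-\varphi'$ directly. Your additional framing of the difference of Atiyah classes as the graded commutator $[\theta,d]$ is a nice way to see in advance why the homotopy must be $\theta$, but the underlying computation is the same.
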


\begin{proof}

Suppose we choose other connections for the $E_i$, say $\nabla_i^{'}:E_i\rightarrow \Omega^1_Q\otimes_Q E_i$, $i=0,1$. We show that $\varphi=\varphi_{\mathcal{E},\nabla}$ is homotopic to $\varphi'=\varphi_{\mathcal{E},\nabla'}$.

First, $\nabla_i-\nabla_i'$ is a morphisms of $Q$-modules: for any $q\in Q$, $x\in E_i$, $$(\nabla_i-\nabla_i')(q\cdot x)=\nabla_i(q\cdot x)-\nabla_i'(q\cdot x)=(dq\wedge x+q\cdot\nabla_i(x))-(dq\wedge x+q\cdot\nabla_i'(x))=q\cdot (\nabla_i-\nabla_i')(x).$$

Therefore, we can define $\alpha_0 =
\begin{bmatrix}
   0 \\ (\nabla_{0}-\nabla_0')
\end{bmatrix},
\alpha_1 = \begin{bmatrix}
    0 \\ (\nabla_1-\nabla_1')
\end{bmatrix}$, which live in the following diagram

\begin{displaymath}
    \xymatrixcolsep{14pc}\xymatrix{
        E_1 \ar[d] \ar[r]^A  & E_0 \ar@{.>}[ld]^{\alpha_0} \ar[d]^{\varphi-\varphi'} \ar[r]^{B}  & E_1  \ar@{.>}[ld]^{\alpha_1} \ar[d]  \\
        \voplus{E_1}{\Omega^{1}\otimes E_0} \ar[r]_{\overline{A}} & \voplus{E_0}{\Omega^{1}\otimes E_1} \ar[r]_{\overline{B}} & \voplus{E_1}{\Omega^{1}\otimes E_0} }
\end{displaymath}

It's easy to check that $\overline{A}\circ\alpha_0+\alpha_1\circ B=\varphi-\varphi'$ and similarly for the other square. \end{proof}

Therefore, we usually drop the $\nabla$ from the notation $\varphi_{\mathcal{E},\nabla}$ to simply write it as $\varphi_{\mathcal{E}}$. When $Q$ is local or if we take $E_i$ to be free $Q$-modules, the Atiyah clase is typically like that of the following example.

\begin{ex}

For $\mathcal{E}=(Q^n\xrightarrow{A} Q^n\xrightarrow{B} Q^n), df=AdB+(dA)B$, where $dA=(d_{Q/k}(a_{ij}))$ for the matrix $A=(a_{ij})$. Since $\varphi$ is independent of choice of connection, we use the exterior differential $d$ to construct the Atiyah class, i.e., $\nabla_i=d$ for $i=0,1$. First note that we have $d\circ A-A\circ d=dA\cdot$, because $(d\circ A-A\circ d)(x)=d(A\cdot x)-A\cdot (dx)=dA\cdot x+A\cdot dx-A\cdot dx=dA\cdot x$. Therefore 

$$At_{\mathcal{E}}=\begin{bmatrix} 0 & A \\ B & 0 \end{bmatrix}\cdot \begin{bmatrix} d & 0 \\ 0 & d \end{bmatrix} - \begin{bmatrix} d & 0 \\ 0 & d \end{bmatrix}\cdot \begin{bmatrix} 0 & A \\ B & 0 \end{bmatrix}=\begin{bmatrix} 0 & d\circ A-A\circ d \\ d\circ B-B\circ d & 0 \end{bmatrix}=\begin{bmatrix} 0 & dA \\ dB & 0 \end{bmatrix}$$ hence $At^i_{\mathcal{E}}=$ 

$$\begin{bmatrix} \overbrace{dAdB\cdots dAdB}^i & 0 \\ 0 & \underbrace{dBdA\cdots dBdA}_i \end{bmatrix} \text{($i$ even)   or  } \begin{bmatrix} 0 & \overbrace{dAdB\cdots dA}^i \\ \underbrace{dBdA\cdots dB}_i & 0 \end{bmatrix} \text{($i$ odd)}$$

\end{ex}

\begin{defn}

Define $\mathcal{E}^{(i)}:=(Q \xrightarrow{df\wedge} \Omega^1)^{\otimes i}\otimes_{mf} \mathcal{E}$. Also define morphisms $$\varphi^{(i)}_{\mathcal{E}}:=1^{\otimes i-1}\otimes\varphi_{\mathcal{E}}: \hp{0.2} \mathcal{E}^{(i-1)}\rightarrow \mathcal{E}^{(i)}.$$ By our definition, $\varphi_{\mathcal{E}}=\varphi_{\mathcal{E}}^{(1)}$. Note that $\varphi^{(i)}_{\mathcal{E}}$ can be written in the form: $\begin{bmatrix} I_{2^i} \\ At_{\mathcal{E}} \cdot I_{2^i} \end{bmatrix}$, where $I_{2^i}$ means the $2^i\times 2^i$ identity matrix.

\end{defn}

The following illustrates what we mean by $\mathcal{E}^{(i)}$ and $\varphi^{(i)}_{\mathcal{E}}$:

$$\xymatrix{\mathcal{E}= & E_1 \ar[r]^A \ar[d] & E_0\ar[r]^B \ar[d] & E_1 \ar[d] \\
\mathcal{E}^{(1)}= & \voplus{E_1}{\Omega^1\otimes E_0} \ar[r]^{\overline{A}} \ar[d] & \voplus{E_0}{\Omega^1\otimes E_1} \ar[r]^{\overline{B}} \ar[d] & \voplus{E_1}{\Omega^1\otimes E_0} \ar[d] \\
\mathcal{E}^{(2)}= & \voplus{\voplus{E_1}{\Omega^1\otimes E_0}}{\voplus{\Omega^1\otimes E_0}{\Omega^1\otimes\Omega^1\otimes E_1}} \ar[r]^{\overline{\overline{A}}} \ar[d] & \voplus{\voplus{E_0}{\Omega^1\otimes E_1}}{\voplus{\Omega^1\otimes E_1}{\Omega^1\otimes\Omega^1\otimes E_0}} \ar[r]^{\overline{\overline{B}}} \ar[d] & \voplus{\voplus{E_1}{\Omega^1\otimes E_0}}{\voplus{\Omega^1\otimes E_0}{\Omega^1\otimes\Omega^1\otimes E_1}} \ar[d] \\
\vdots & \vdots & \vdots & \vdots}$$ and for any $e\in\mathcal{E}$, 

$$\varphi_{\mathcal{E}}(e)=\begin{bmatrix} 1 \\ At_{\mathcal{E}} \end{bmatrix}(e)=\begin{bmatrix} e \\ At_{\mathcal{E}} (e)\end{bmatrix}$$

$$\varphi_{\mathcal{E}}^{(1)}(\begin{bmatrix} e \\ At_{\mathcal{E}} (e)\end{bmatrix})=\begin{bmatrix} 1 &  \\  & 1 \\ At_{\mathcal{E}} & \\ & At_{\mathcal{E}}\end{bmatrix} \cdot \begin{bmatrix} e \\ At_{\mathcal{E}} (e)\end{bmatrix}=\begin{bmatrix} e \\ At_{\mathcal{E}}(e) \\ At_{\mathcal{E}}(e) \\ \widetilde{At^2_{\mathcal{E}}} (e)\end{bmatrix}=\begin{bmatrix} e \\ 2 At_{\mathcal{E}}(e) \\  \widetilde{At^2_{\mathcal{E}}}(e) \end{bmatrix}$$ and 

$$\varphi_{\mathcal{E}}^{(2)}(\begin{bmatrix} e \\ 2 At_{\mathcal{E}}(e) \\  \widetilde{At^2_{\mathcal{E}}}(e) \end{bmatrix})=\begin{bmatrix} 1 & & \\ & 1 & \\ & & 1 \\ At_{\mathcal{E}} & & \\ & At_{\mathcal{E}} & \\ & & At_{\mathcal{E}}\end{bmatrix}\cdot \begin{bmatrix} e \\ 2 At_{\mathcal{E}}(e) \\  \widetilde{At^2_{\mathcal{E}}}(e) \end{bmatrix}=\begin{bmatrix} e \\ 2At_{\mathcal{E}}(e) \\  \widetilde{At_{\mathcal{E}}^2}(e) \\ At_{\mathcal{E}}(e) \\ 2\widetilde{At_{\mathcal{E}}^2}(e) \\ \widetilde{At_{\mathcal{E}}^3}(e) \end{bmatrix}=\begin{bmatrix} e \\ 3At_{\mathcal{E}}(e) \\ 3\widetilde{At_{\mathcal{E}}^2}(e) \\ \widetilde{At_{\mathcal{E}}^3}(e)\end{bmatrix}, \hp{0.2} \cdots$$

\begin{cor}

$\varphi^{(i)}_{\mathcal{E}}$ is a strict morphism of matrix factorizations and it is independent of the choice of connections (up to homotopy) for any $i$.

\end{cor}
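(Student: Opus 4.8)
The plan is to recognize $\varphi^{(i)}_{\mathcal{E}}$ as the image of the single morphism $\varphi_{\mathcal{E}}$ under the operation of tensoring on the left with the fixed matrix factorization $\mathcal{W} := (Q \xrightarrow{df\wedge} \Omega^1)^{\otimes(i-1)}$, and then to deduce both assertions from the properties of $\varphi_{\mathcal{E}}$ already recorded in Proposition 4.5 and Proposition 4.6. Indeed, by Definition 4.8 we have $\varphi^{(i)}_{\mathcal{E}} = 1_{\mathcal{W}} \otimes_{mf} \varphi_{\mathcal{E}}$, and associativity of $\otimes_{mf}$ identifies its source $\mathcal{W} \otimes_{mf} \mathcal{E}$ with $\mathcal{E}^{(i-1)}$ and its target $\mathcal{W} \otimes_{mf} \mathcal{E}^{(1)}$ with $\mathcal{E}^{(i)}$, so that it is a morphism $\mathcal{E}^{(i-1)} \to \mathcal{E}^{(i)}$ as claimed.

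For strictness I would first record the elementary fact that tensoring a strict morphism with the identity of a fixed matrix factorization yields a strict morphism, i.e. the bifunctoriality of $\otimes_{mf}$ on $MF(Q,f)$. If $\alpha: \mathcal{M} \to \mathcal{N}$ satisfies $d^{\mathcal{N}}\alpha = \alpha d^{\mathcal{M}}$, then a short computation with the tensor differential $d_{\mathcal{W}\otimes X}(w\otimes x) = d_{\mathcal{W}}(w)\otimes x + (-1)^{|w|} w\otimes d_X(x)$ gives $d^{\mathcal{W}\otimes\mathcal{N}}\circ(1_{\mathcal{W}}\otimes\alpha) = (1_{\mathcal{W}}\otimes\alpha)\circ d^{\mathcal{W}\otimes\mathcal{M}}$, the mixed terms matching because $1_{\mathcal{W}}$ and $\alpha$ are both of even degree and $\alpha$ commutes with the differentials. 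Applying this with $\alpha = \varphi_{\mathcal{E}}$, which is strict by Proposition 4.5, shows that $\varphi^{(i)}_{\mathcal{E}}$ is strict.

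For independence up to homotopy I would invoke that $\otimes_{mf}$ descends to the homotopy category, as recorded immediately after Definition 2.9, so that $1_{\mathcal{W}}\otimes_{mf}-$ sends homotopic morphisms to homotopic morphisms. Proposition 4.6 supplies a homotopy $h$ between $\varphi_{\mathcal{E},\nabla}$ and $\varphi_{\mathcal{E},\nabla'}$; tensoring it produces a homotopy between $\varphi^{(i)}_{\mathcal{E},\nabla}$ and $\varphi^{(i)}_{\mathcal{E},\nabla'}$. Equivalently one can argue by induction on $i$ via $\varphi^{(i)}_{\mathcal{E}} = 1_{(Q\to\Omega^1)}\otimes\varphi^{(i-1)}_{\mathcal{E}}$, the base case being Proposition 4.6.

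The one genuine point of care — and the step I expect to be the main obstacle — is the Koszul sign bookkeeping in the homotopy argument: since $h$ is an odd-degree map, forming $1_{\mathcal{W}}\otimes h$ introduces signs, and with the convention $(1_{\mathcal{W}}\otimes h)(w\otimes e) = (-1)^{|w|} w\otimes h(e)$ one must verify the homotopy identity $d\circ(1_{\mathcal{W}}\otimes h) + (1_{\mathcal{W}}\otimes h)\circ d = 1_{\mathcal{W}}\otimes(\varphi_{\mathcal{E},\nabla} - \varphi_{\mathcal{E},\nabla'})$. Expanding both composites, the two terms of the form $d_{\mathcal{W}}(w)\otimes h(e)$ cancel — their signs differ precisely because $d_{\mathcal{W}}$ raises degree by one — leaving $w\otimes(d\,h + h\,d)(e)$, which is the wanted relation. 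Once this sign check is in hand, both assertions of the corollary follow formally from Propositions 4.5 and 4.6.
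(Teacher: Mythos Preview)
Your proposal is correct and follows exactly the approach the paper intends: the corollary is stated without proof, immediately after defining $\varphi^{(i)}_{\mathcal{E}} = 1^{\otimes(i-1)}\otimes\varphi_{\mathcal{E}}$, so the implicit argument is precisely that tensoring with a fixed factor preserves strictness (giving the first claim via Proposition~4.5) and preserves homotopies (giving the second via Proposition~4.6). Your careful handling of the Koszul signs in the homotopy check is more detail than the paper provides, but it is the right verification.
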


\subsection{The map $\varphi^n$}\hp{1}

For any nature number $n$,  denote the complex $$\xymatrixcolsep{4pc}\xymatrix{ Q \ar[r]^{ndf\wedge} & \Omega^1
\ar[r]^{(n-1)df\wedge} & \Omega^2 \ar[r]^{(n-2)df\wedge} & \cdots
\ar[r]^{df\wedge} & \Omega^n } \hp{0.2} (*)$$  by $\Omega^{(n)}_{Q,df}$, where $idf\wedge$ denotes left multiplication by $idf$ (i.e., $w_1\wedge\cdots\wedge w_n\mapsto idf\wedge w_1\wedge\cdots\wedge w_n)$, for any $0\leqslant i \leqslant n$. 

There is a natural map of chain complexes $(Q\xrightarrow{df\wedge} \Omega^1)^{\otimes n}\rightarrow \Omega^{(n)}_{Q,df}$, induced from the natural $Q$-module homomorphisms of the following diagram $$\xymatrix{ (Q\oplus\Omega^1)^{\otimes n} \ar@{-->}[rd] \ar@{^{(}->}[r] &  (\oplus_{i \geqslant 0}\Omega^i)^{\otimes n}  \ar[d]^{\wedge} \\ & \oplus_{i\geqslant 0} \Omega^i    }  $$ We again denote this map by $\wedge$. 

\begin{prop}

The map $\wedge: (Q\xrightarrow{df\wedge} \Omega^1)^{\otimes n}\rightarrow \Omega^{(n)}_{Q,df}$ is a map of complexes.

\end{prop}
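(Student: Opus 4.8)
The plan is to reduce everything to homogeneous simple tensors and then verify two things: that $\wedge$ respects the grading, and that it commutes with the two differentials. Since all the objects are $Q$-modules and the maps involved ($\wedge$ and the two differentials) are $Q$-linear and additive, it suffices to check the claim on elements of the form $\omega = \omega_1 \otimes \cdots \otimes \omega_n$ with each $\omega_j$ homogeneous, i.e. $\omega_j \in Q = \Omega^0$ (degree $0$) or $\omega_j \in \Omega^1$ (degree $1$). Write $|\omega_j| \in \{0,1\}$ for these degrees and $k = \sum_{j} |\omega_j|$ for the total degree of $\omega$ in the tensor complex.

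First I would record the easy point that $\wedge$ preserves degree: the image $\Phi := \omega_1 \wedge \cdots \wedge \omega_n$ is a form of degree $\sum_j |\omega_j| = k$, so $\wedge$ sends the degree-$k$ piece of $(Q \xrightarrow{df\wedge} \Omega^1)^{\otimes n}$ into $\Omega^k$, which is exactly the degree-$k$ piece of $\Omega^{(n)}_{Q,df}$. Next I would compute $\wedge \circ d^{\otimes}(\omega)$, where $d^{\otimes}$ is the tensor-product differential given by the Koszul rule
$$d^{\otimes}(\omega_1 \otimes \cdots \otimes \omega_n) = \sum_{j=1}^{n} (-1)^{|\omega_1| + \cdots + |\omega_{j-1}|}\, \omega_1 \otimes \cdots \otimes (df \wedge \omega_j) \otimes \cdots \otimes \omega_n.$$
Because the factor complex $(Q \xrightarrow{df\wedge}\Omega^1)$ has zero differential out of degree $1$, only the indices $j$ with $\omega_j \in Q$ contribute, so this is effectively a sum over the $n-k$ degree-$0$ slots.

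The crux is the sign computation for a single surviving term. Fixing $j$ with $\omega_j \in Q$, the scalar $\omega_j$ pulls out of the wedge and the $1$-form $df$ now sitting in slot $j$ can be moved to the front past $\omega_1, \ldots, \omega_{j-1}$; this transposition costs the sign $(-1)^{|\omega_1| + \cdots + |\omega_{j-1}|}$. The upshot is
$$\omega_1 \wedge \cdots \wedge (df\wedge\omega_j) \wedge \cdots \wedge \omega_n = (-1)^{|\omega_1| + \cdots + |\omega_{j-1}|}\, df \wedge \Phi,$$
and this Koszul sign exactly cancels the sign already attached to the $j$-th term of $d^{\otimes}$, leaving $df\wedge\Phi$ with coefficient $+1$. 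Summing over the $n-k$ surviving indices gives
$$\wedge\bigl(d^{\otimes}(\omega)\bigr) = (n-k)\, df \wedge \Phi.$$
On the other side, the differential of $\Omega^{(n)}_{Q,df}$ out of degree $k$ is, by the definition $(*)$, left multiplication by $(n-k)df$, so $d^{(n)}(\wedge \omega) = d^{(n)}(\Phi) = (n-k)\, df\wedge\Phi$. The two agree, which is exactly the commuting square one needs.

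The only real obstacle is the sign bookkeeping, and the point worth isolating is that the cancellation of the two Koszul signs is what forces each surviving term to appear with coefficient $+1$; the multiplicity $n-k$ of degree-$0$ slots is then precisely the scaling factor $(n-k)$ built into the target complex $(*)$. In other words, the coefficients $ndf, (n-1)df, \ldots, df$ appearing in $\Omega^{(n)}_{Q,df}$ are rigged exactly so that $\wedge$ becomes a chain map, and verifying this matching of coefficients is the whole content of the proposition.
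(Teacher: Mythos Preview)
Your proof is correct and follows essentially the same route as the paper's: reduce to homogeneous simple tensors, observe that only the degree-$0$ slots contribute to the tensor differential, check that the Koszul sign from $d^{\otimes}$ cancels the sign from moving $df$ to the front, and conclude that both sides equal $(n-k)\,df\wedge\Phi$.
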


\begin{proof}

The map $\wedge$ is obviously a $Q$-module homomorphism, so we just need to show $\wedge$ commutes with the differentials of complexes.

Let us denote the differential in $(Q\xrightarrow{df\wedge} \Omega^1)$ by $\partial$ and the differential in $\Omega^{(n)}_{Q,df}$ by $\partial'$. Note that for an element $u\in \Omega^m, \partial' u=(n-m)df\wedge u$ and for $v\in Q\oplus \Omega^1$, $$\partial(v)=\begin{dcases} df\wedge v, \text{ if } |v|=0 \\ 0, \hp{0.35}\text{else} \end{dcases}$$ Therefore, for $a_1\otimes\cdots\otimes a_m\in\Omega^m$, where $a_i\in\Omega^1$, $$\wedge\partial(a_1\otimes\cdots\otimes a_m)=\wedge(\sum_{i=1}^m (-1)^{|a_1|+\cdots+|a_{i-1}|}a_1\otimes\cdots\otimes\partial(a_i)\otimes\cdots\otimes a_m)$$ $$=\sum_{i=1}^m(-1)^{|a_1|+\cdots+|a_{i-1}|}a_1\wedge\cdots\wedge\partial(a_i)\wedge\cdots\wedge a_m$$ $$=\sum_{i=1}^m(-1)^{|a_1|+\cdots+|a_{i-1}|}a_1\wedge\cdots\wedge (df\wedge a_i)\wedge\cdots\wedge a_m$$ $$=\sum_{i=1 \text{ and } |a_i|=0}^m (-1)^{2(|a_1|+\cdots +|a_{i-1}|)} df\wedge a_1\wedge\cdots\wedge a_m$$ $$=(n-m)df\wedge a_1\wedge\cdots\wedge a_m$$ Also, $$\partial'\wedge(a_1\otimes\cdots\otimes a_m)$$ $$=\partial'(a_1\wedge\cdots\wedge a_m)$$ $$=(n-m)df\wedge a_1\wedge\cdots\wedge a_m$$ This completes the proof. \end{proof}

We obviously have: 

\begin{cor}

$\wedge\otimes 1_{\mathcal{E}}: (Q\xrightarrow{df\wedge} \Omega^1)^{\otimes n}\otimes_{mf} \mathcal{E} \rightarrow \Omega^{(n)}_{Q,df}\otimes_{mf} \mathcal{E}$ is a strict morphism of matrix factorizations, for any matrix factorization $\mathcal{E}$.

\end{cor}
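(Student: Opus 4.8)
The plan is to obtain this immediately from the preceding proposition together with two pieces of routine functoriality: that the $\mathbb{Z}/2$-folding carries a map of complexes to a strict morphism of matrix factorizations, and that the matrix-factorization tensor product $-\otimes_{mf}-$ is functorial in each variable.

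First I would record what the preceding proposition gives, namely that $\wedge$ is a map of complexes $(Q\xrightarrow{df\wedge}\Omega^1)^{\otimes n}\rightarrow \Omega^{(n)}_{Q,df}$, where both source and target are bounded complexes of finitely generated projective $Q$-modules; moreover each has square-zero differential (for $\Omega^{(n)}_{Q,df}$ this holds because consecutive differentials compose to a scalar multiple of $df\wedge df\wedge(-)=0$), so their $\mathbb{Z}/2$-foldings are genuine matrix factorizations of $0$. A chain map between complexes of projective modules, after folding, satisfies exactly the strict-morphism condition $d^{\mathrm{fold}}\circ\alpha=\alpha\circ d^{\mathrm{fold}}$: the folding only regroups the modules by parity of cohomological degree and assembles the differentials block-wise, so the folded commutation relation is nothing but the original chain-map identity read off in each parity. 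Hence the folded map, which I denote $\wedge$ again, is a strict morphism of matrix factorizations of $0$.

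Second I would invoke functoriality of $-\otimes_{mf}\cE$. For strict morphisms $\alpha\colon\cM\to\cN$ of $(Q,f)$ and $\beta\colon\cM'\to\cN'$ of $(Q,f')$, the map $\alpha\otimes\beta$ commutes with the tensor differentials $d_{M\otimes M'}$ and $d_{N\otimes N'}$: applying $\alpha\otimes\beta$ to $d_{M\otimes M'}(m\otimes m')=d_M(m)\otimes m'+(-1)^{|m|}m\otimes d_{M'}(m')$ and using that $\alpha,\beta$ are degree-preserving and strict gives $(\alpha\otimes\beta)\circ d_{M\otimes M'}=d_{N\otimes N'}\circ(\alpha\otimes\beta)$ term by term, the sign $(-1)^{|m|}$ being unaffected since $\alpha$ preserves degree. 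Specializing to $\beta=1_{\cE}$ and $\alpha=\wedge$ yields that $\wedge\otimes 1_{\cE}$ is a strict morphism from $(Q\xrightarrow{df\wedge}\Omega^1)^{\otimes n}\otimes_{mf}\cE$ to $\Omega^{(n)}_{Q,df}\otimes_{mf}\cE$, both of which are matrix factorizations of $0+f=f$, as required.

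I expect no genuine obstacle here: the corollary is the assertion that folding-then-tensoring is functorial, and the preceding proposition already supplies the only nontrivial ingredient, namely that $\wedge$ respects differentials. The sole computational point is the Leibniz comparison in the second step, and it is immediate because the degree shift in the sign is preserved under a degree-zero strict morphism.
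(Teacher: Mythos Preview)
Your proposal is correct and takes essentially the same approach as the paper, which merely states ``We obviously have'' before the corollary. You have simply unpacked the two routine functoriality steps (folding a chain map yields a strict morphism of matrix factorizations of $0$, and $-\otimes_{mf}\cE$ is functorial) that the paper leaves implicit.
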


\begin{defn}

Define $\varphi_{\mathcal{E}}^n: \mathcal{E}\rightarrow \Omega^{(n)}_{Q,df}\otimes_{mf} \mathcal{E}$ to be the composition $(\wedge\otimes 1_{\mathcal{E}})\circ \varphi_{\mathcal{E}}^{(n)}\circ \varphi_{\mathcal{E}}^{(n-1)} \circ \cdots \circ \varphi_{\mathcal{E}}^{(1)}$; i.e., $\varphi_{\mathcal{E}}^n$ is the composition of the following chain of strict morphisms $$\mathcal{E}\xrightarrow{\varphi^{(1)}} \mathcal{E}^{(1)}\xrightarrow{\varphi^{(2)}} \mathcal{E}^{(2)}\xrightarrow{\varphi^{(3)}} \cdots \xrightarrow{\varphi_{\mathcal{E}}^{(n)}} \mathcal{E}^{(n)}=(Q\xrightarrow{df\wedge} \Omega^1)^{\otimes n}\otimes_{mf} \mathcal{E} \xrightarrow{\wedge\otimes 1_{\mathcal{E}}} \Omega^{(n)}_{Q,df}\otimes_{mf} \mathcal{E}$$

\end{defn}

\begin{cor}

$\varphi^n_{\mathcal{E}}$ is a strict morphism of matrix factorizations and is independent of choice of connections up to homotopy.

\end{cor}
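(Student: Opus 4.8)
The plan is to deduce this formally from the two ingredients already in place—the strictness and connection-independence of each factor appearing in the definition of $\varphi^n_{\mathcal{E}}$ (Definition 4.12)—together with the fact that the homotopy relation is a congruence for composition.

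For the first assertion I would argue directly. By definition $\varphi^n_{\mathcal{E}} = (\wedge \otimes 1_{\mathcal{E}}) \circ \varphi^{(n)}_{\mathcal{E}} \circ \cdots \circ \varphi^{(1)}_{\mathcal{E}}$. Each $\varphi^{(i)}_{\mathcal{E}}$ is a strict morphism by Corollary 4.9, and $\wedge \otimes 1_{\mathcal{E}}$ is a strict morphism by Corollary 4.11. Since a composite of strict morphisms is again strict—commutation with the differential $d$ is evidently preserved under composition—$\varphi^n_{\mathcal{E}}$ is strict.

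For the second assertion I would fix two systems of connections $\nabla$ and $\nabla'$ and compare the two resulting strict morphisms factor by factor. The map $\wedge \otimes 1_{\mathcal{E}}$ involves no connection, so it is literally the same in both cases, while Corollary 4.9 gives $\varphi^{(i)}_{\mathcal{E},\nabla} \simeq \varphi^{(i)}_{\mathcal{E},\nabla'}$ for each $1 \leqslant i \leqslant n$. I would then invoke the fact (recorded when $[MF(Q,f)]$ was constructed) that homotopy is preserved by composition on either side: whenever $\alpha \simeq \alpha'$ and $\beta \simeq \beta'$ are composable, $\beta \circ \alpha \simeq \beta' \circ \alpha \simeq \beta' \circ \alpha'$. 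Applying this inductively along the chain of $n$ composable factors gives $\varphi^n_{\mathcal{E},\nabla} \simeq \varphi^n_{\mathcal{E},\nabla'}$, which is the claimed independence up to homotopy.

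I do not expect a genuine obstacle here: once Corollaries 4.9 and 4.11 are available, the statement is purely formal, amounting to the observation that passing to the homotopy category $[MF(Q,f)]$ turns each $\varphi^{(i)}_{\mathcal{E}}$ into a morphism that does not depend on the connection, so that their composite is likewise well defined there. The only point deserving an explicit line is the congruence property of homotopy under composition, and this is exactly what guarantees that $[MF(Q,f)]$ is a category in the first place.
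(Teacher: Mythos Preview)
Your proposal is correct and follows essentially the same approach as the paper: both deduce the result directly from Corollary 4.9 (strictness and connection-independence of each $\varphi^{(i)}_{\mathcal{E}}$) together with Corollary 4.11, the paper's proof being a one-line invocation of these while you spell out the congruence-under-composition step explicitly.
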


\begin{proof}

We know that each of the $\varphi^{(i)}_{\mathcal{E}}$'s is independent of choice of connections, and thus $\varphi^n_{\mathcal{E}}$ is too. \end{proof}

\begin{prop}

We have $(\wedge\otimes 1_{\mathcal{E}})\circ (1_{\Omega^1[1]^{\otimes(i-1)}}\otimes At)=(\wedge\otimes 1_{\mathcal{E}})\circ (1_{\Omega^{i-1}}\otimes At)\circ(\wedge\otimes 1_{\mathcal{E}})$, for all $i$; that is, the following diagram commutes:

$$\xymatrixcolsep{5pc}\xymatrix{
        \overbrace{\Omega^1[1]\otimes_{mf}\cdots\otimes_{mf}\Omega^1[1]}^{i-1} \otimes_{mf} \mathcal{E} \ar[d]_{1_{\Omega^1[1]^{\otimes(i-1)}}\otimes At} 
        \ar[r]^{\wedge\otimes 1_\mathcal{E}}  & \Omega^{i-1}[i-1]\otimes_{mf} \mathcal{E} \ar[d]^{1_{\Omega^{i-1}} \otimes At}   \\
       \underbrace{\Omega^1[1]\otimes_{mf}\Omega^1[1]\otimes_{mf}\cdots\otimes_{mf}\Omega^1[1]}_i \otimes_{mf} \mathcal{E} 
       \ar[rd]_{\wedge\otimes 1_{\mathcal{E}}} & \Omega^{i-1}[i-1]\otimes_{mf}\Omega^1[1]\otimes_{mf} \mathcal{E} \ar[d]^{\wedge\otimes 1_{\mathcal{E}}}\\ 
         & \Omega^i[i]\otimes_{mf} \mathcal{E} }$$

\end{prop}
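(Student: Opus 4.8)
The plan is to verify the stated equality of the two composites by evaluating both on a general element, the whole diagram being built from $Q$-module homomorphisms. Indeed the Atiyah class $At$ is $Q$-linear, as recorded immediately after its definition, and each wedge arrow $\wedge$ is $Q$-linear because the wedge product is $Q$-multilinear; hence both composites are $Q$-module homomorphisms (in fact strict morphisms of matrix factorizations, by the preceding corollaries), and it suffices to check that they agree on a homogeneous generator $\omega_1\otimes\cdots\otimes\omega_{i-1}\otimes e$ with each $\omega_k\in\Omega^1$ and $e\in\mathcal{E}$.

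First I would write the image of the Atiyah class as $At(e)=\sum_j\eta_j\otimes e_j$ with $\eta_j\in\Omega^1$ and $e_j\in\mathcal{E}$. The structural point that drives the whole argument is that along both paths $At$ is inserted in exactly the same tensor slot, the one immediately adjacent to $\mathcal{E}$; the two routes differ only in how the exterior one-form factors are bracketed before being wedged together.

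Then I would trace the two composites. Along the left arrow $(\wedge\otimes 1_{\mathcal{E}})\circ(1_{\Omega^1[1]^{\otimes(i-1)}}\otimes At)$, applying $At$ in the final slot gives $\omega_1\otimes\cdots\otimes\omega_{i-1}\otimes\sum_j\eta_j\otimes e_j$, and wedging all $i$ factors produces $\sum_j(\omega_1\wedge\cdots\wedge\omega_{i-1}\wedge\eta_j)\otimes e_j$. Along the right arrow $(\wedge\otimes 1_{\mathcal{E}})\circ(1_{\Omega^{i-1}}\otimes At)\circ(\wedge\otimes 1_{\mathcal{E}})$, wedging the first $i-1$ factors first gives $(\omega_1\wedge\cdots\wedge\omega_{i-1})\otimes e$, applying $At$ gives $(\omega_1\wedge\cdots\wedge\omega_{i-1})\otimes\sum_j\eta_j\otimes e_j$, and the final wedge again yields $\sum_j(\omega_1\wedge\cdots\wedge\omega_{i-1}\wedge\eta_j)\otimes e_j$. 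The two results coincide by the associativity of the wedge product.

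The step I expect to demand the most care, and really the only obstacle, is the bookkeeping of the Koszul signs produced by the shifts $[1]$ and by the matrix-factorization tensor product of Definition 2.9. My expectation is that these signs match on the nose along the two paths: since $At$ occupies the same $\mathcal{E}$-adjacent slot in both, the sign incurred in commuting it past the exterior factors depends only on their total degree, which is the same under either bracketing, and the coherence of the wedge maps with the shift identifications ($\Omega^1[1]^{\otimes i}\to\Omega^i[i]$ versus wedging in two stages) absorbs the remaining signs uniformly. Making this sign comparison explicit is what turns the formal computation above into a genuine proof.
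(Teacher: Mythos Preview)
Your approach is correct and is precisely the ``direct check'' the paper alludes to: the paper's own proof consists only of the sentence ``We can check this directly. For example, it is obvious for $i=1$,'' whereas you have actually written out the element-level verification and identified associativity of the wedge product as the reason the two bracketings agree. Your cautionary remark about Koszul signs is well placed, and your observation that $At$ sits in the same $\mathcal{E}$-adjacent slot along both routes is exactly why the signs cancel; this is more detail than the paper provides.
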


\begin{proof}

We can check this directly. For example, it is obvious for $i=1$. \end{proof}

For the sake of simplicity, we will drop all the $1\otimes\cdots\otimes 1$ if there is no confusion from now on. For example, for the above proposition, we will in fact write it as
$(\wedge\otimes 1_{\mathcal{E}})\circ At=(\wedge\otimes 1_{\mathcal{E}})\circ At\circ(\wedge\otimes 1_{\mathcal{E}})$.

Similarly, we have the following corollary.

\begin{cor}

$(\wedge\otimes 1_{\mathcal{E}})\circ \varphi^{(i)}=(\wedge\otimes 1_{\mathcal{E}})\circ
\varphi^{(i)}\circ(\wedge\otimes 1_{\mathcal{E}})$, for any $i$. That is, we
have the following commutative diagram:

$$\xymatrixcolsep{5pc}\xymatrix{
\mathcal{E}^{(i-1)} \ar[d]_{\wedge\otimes 1_{\mathcal{E}}} \ar[r]^{\varphi^{(i)}} & \mathcal{E}^{(i)} \ar[r]^{\wedge\otimes 1_{\mathcal{E}}} & \Omega^{(i)}_{Q,df} \otimes_{mf} \mathcal{E}\\
\Omega^{(i-1)}_{Q,df}\otimes_{mf} \mathcal{E} \ar[r]_{\varphi^{(i)}} &
\Omega^{(i-1)}_{Q,df}\otimes_{mf}(Q\rightarrow \Omega^1)\otimes_{mf} \mathcal{E}
\ar[ur]_{\wedge\otimes 1_{\mathcal{E}}}}$$

\end{cor}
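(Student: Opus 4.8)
The plan is to reduce this corollary to the preceding proposition by exploiting the block form of $\varphi^{(i)}$. Recall that $\varphi^{(i)} = 1^{\otimes(i-1)}\otimes\varphi_{\mathcal{E}}$ and that $\varphi_{\mathcal{E}} = \begin{bmatrix} 1 \\ At_{\mathcal{E}}\end{bmatrix}$, so on the underlying $\mathbb{Z}/2$-graded modules the map $\varphi^{(i)}$ splits as a sum of two pieces: the piece $\iota_0$ that inserts $\mathcal{E}^{(i-1)}$ into the degree-$0$ (the $Q$) slot of the newly appended factor $(Q\xrightarrow{df\wedge}\Omega^1)$ via $x\mapsto 1\otimes x$, and the piece that inserts it into the degree-$1$ (the $\Omega^1$) slot via the Atiyah class $At_{\mathcal{E}}$. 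Since $\wedge\otimes 1_{\mathcal{E}}$ is additive and $\varphi^{(i)}$ acts only in the appended factor, I would verify the two pieces of the square separately and add them.

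First I would treat the identity piece. Wedging a form with the scalar $1\in Q$ leaves it unchanged and does not raise its degree, so the degree-$d$ summand $\Omega^d\subseteq\Omega^{(i-1)}_{Q,df}$ is carried into the degree-$d$ summand $\Omega^d\subseteq\Omega^{(i)}_{Q,df}$, compatibly with the differential convention $\partial' u = (n-m)df\wedge u$ on $\Omega^m$. Consequently, collapsing $(Q\to\Omega^1)^{\otimes i}$ after inserting $x$ into the degree-$0$ slot of the last factor produces exactly the same element of $\Omega^{(i)}_{Q,df}\otimes_{mf}\mathcal{E}$ as first collapsing $(Q\to\Omega^1)^{\otimes(i-1)}$ via $\wedge\otimes 1_{\mathcal{E}}$ and then inserting into the degree-$0$ slot and collapsing again. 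Hence the identity piece commutes around the diagram trivially: $(\wedge\otimes 1_{\mathcal{E}})\circ\iota_0 = (\wedge\otimes 1_{\mathcal{E}})\circ\iota_0\circ(\wedge\otimes 1_{\mathcal{E}})$.

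The $At$ piece is exactly the content of the preceding proposition. Indeed, inserting via $At_{\mathcal{E}}$ into the last slot and then applying $\wedge\otimes 1_{\mathcal{E}}$ is the map $(\wedge\otimes 1_{\mathcal{E}})\circ At$, while going down, inserting, and collapsing is $(\wedge\otimes 1_{\mathcal{E}})\circ At\circ(\wedge\otimes 1_{\mathcal{E}})$; these agree by the proposition, which asserts precisely $(\wedge\otimes 1_{\mathcal{E}})\circ At = (\wedge\otimes 1_{\mathcal{E}})\circ At\circ(\wedge\otimes 1_{\mathcal{E}})$ in the simplified notation. Adding the identity piece and the $At$ piece yields $(\wedge\otimes 1_{\mathcal{E}})\circ\varphi^{(i)} = (\wedge\otimes 1_{\mathcal{E}})\circ\varphi^{(i)}\circ(\wedge\otimes 1_{\mathcal{E}})$, as claimed.

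The main obstacle, such as it is, is purely bookkeeping: one must be careful that in the two paths the Atiyah output $At_{\mathcal{E}}(\,\text{--}\,)\in\Omega^1\otimes\mathcal{E}$ is wedged onto the correct side, and that the identity slot really does collapse compatibly with the degree conventions built into $\Omega^{(i)}_{Q,df}$. Once the decomposition of $\varphi^{(i)}$ into its $Q$-slot and $\Omega^1$-slot pieces is fixed, both verifications are immediate, the second being a direct citation of the previous proposition.
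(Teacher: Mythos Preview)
Your proposal is correct and follows essentially the same approach as the paper: both split $\varphi^{(i)}$ into its identity block and its $At$ block, observe that the identity block commutes with $\wedge\otimes 1_{\mathcal{E}}$ trivially, and invoke the preceding proposition for the $At$ block. The paper records this computation in the compact matrix notation $\varphi^{(i)}=\begin{bmatrix} I_{2^i}\\ At\cdot I_{2^i}\end{bmatrix}$, whereas you spell out the two pieces in prose, but the content is identical.
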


\begin{proof}

Note that \hp{0.2}$(\wedge\otimes 1_{\mathcal{E}})\circ\varphi^{(i)}=(\wedge\otimes 1_{\mathcal{E}})\circ\begin{bmatrix} I_{2^i} \\
At\cdot I_{2^i} \end{bmatrix}$

\vp{0.2}

\hp{2.2} $=\begin{bmatrix} I_{2^i} \\
(\wedge\otimes 1_{\mathcal{E}})\circ At\cdot I_{2^i} \end{bmatrix}$

\vp{0.2}

\hp{2.2} $=\begin{bmatrix} I_{2^i} \\
(\wedge\otimes 1_{\mathcal{E}})\circ At\circ(\wedge\otimes 1_{\mathcal{E}})\cdot I_{2^i}
\end{bmatrix}$

\vp{0.2}

\hp{2.2} $=(\wedge\otimes 1_{\mathcal{E}})\circ \begin{bmatrix} I_{2^i} \\
 At\cdot I_{2^i}
\end{bmatrix}\circ(\wedge\otimes 1_{\mathcal{E}})$

\vp{0.2}

\hp{2.2} $=(\wedge\otimes 1_{\mathcal{E}})\circ \varphi^{(i)}\circ(\wedge\otimes
1_{\mathcal{E}}).$ \end{proof}

\begin{cor}

We have $\varphi^n=\displaystyle\sum\limits_{i=0}^n\dbinom{n}{i}At^i\in \Omega^{(n)}_{Q,df}\otimes_{mf}\mathcal{E}$.

\end{cor}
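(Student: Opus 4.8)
The plan is to prove the identity by induction on $n$, the base case $n=0$ being the tautology $\varphi^0 = \mathrm{id}_{\mathcal{E}} = At^0$ (with the standing convention $At^0 = \mathrm{id}$, landing in $\Omega^0\otimes_{mf}\mathcal{E} = \mathcal{E}$). The essential first move is to turn the definition
$$\varphi^n = (\wedge\otimes 1_{\mathcal{E}})\circ \varphi^{(n)}\circ \varphi^{(n-1)}\circ\cdots\circ\varphi^{(1)}$$
into a genuine recursion relating $\varphi^n$ to $\varphi^{n-1}$. Applying Corollary 4.13 with $i=n$ gives $(\wedge\otimes 1_{\mathcal{E}})\circ\varphi^{(n)} = (\wedge\otimes 1_{\mathcal{E}})\circ\varphi^{(n)}\circ(\wedge\otimes 1_{\mathcal{E}})$; inserting this into the composition and recognizing that $(\wedge\otimes 1_{\mathcal{E}})\circ\varphi^{(n-1)}\circ\cdots\circ\varphi^{(1)} = \varphi^{n-1}$, one obtains
$$\varphi^n = (\wedge\otimes 1_{\mathcal{E}})\circ\varphi^{(n)}\circ\varphi^{n-1},$$
where $\varphi^{(n)}$ is now read as the map $1_{\Omega^{(n-1)}_{Q,df}}\otimes\varphi_{\mathcal{E}}$ acting on the already-wedged module $\Omega^{(n-1)}_{Q,df}\otimes_{mf}\mathcal{E}$. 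The point of this reformulation is that it keeps us inside the collapsed target at every stage rather than the exponentially growing tensor power $\mathcal{E}^{(n)}$.

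Next I would feed the inductive hypothesis $\varphi^{n-1} = \sum_{i=0}^{n-1}\binom{n-1}{i}At^i$ into this recursion. The $\Omega^i$-graded piece of $\varphi^{n-1}(e)$ is $\binom{n-1}{i}At^i(e)$. Since $\varphi_{\mathcal{E}} = \begin{bmatrix} 1 \\ At \end{bmatrix}$, the map $1\otimes\varphi_{\mathcal{E}}$ splits this piece into an \emph{identity} summand, which wedges back to the same degree $\Omega^i$, and an \emph{Atiyah} summand $(\wedge\otimes 1_{\mathcal{E}})\circ(1_{\Omega^i}\otimes At)\circ At^i$, which lands in degree $\Omega^{i+1}$. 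The key computation is that this second summand equals exactly $At^{i+1}$: unwinding $At^{i+1} = (\wedge\otimes 1_{\mathcal{E}})\circ\widetilde{At^{i+1}} = (\wedge\otimes 1_{\mathcal{E}})\circ(1^{\otimes i}\otimes At)\circ\widetilde{At^i}$ and using Proposition 4.12 to commute the wedge past the final Atiyah class gives $(\wedge\otimes 1_{\mathcal{E}})\circ(1_{\Omega^i}\otimes At)\circ At^i = At^{i+1}$.

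Collecting contributions by output degree, the coefficient of $At^j$ in $\varphi^n$ receives $\binom{n-1}{j}$ from the identity summand of the degree-$j$ input piece and $\binom{n-1}{j-1}$ from the Atiyah summand of the degree-$(j-1)$ input piece; Pascal's rule $\binom{n-1}{j}+\binom{n-1}{j-1} = \binom{n}{j}$ then yields $\varphi^n = \sum_{j=0}^n\binom{n}{j}At^j$, completing the induction. I expect the main obstacle to be bookkeeping rather than anything conceptual: one must justify the reindexing in the first step honestly (that Corollary 4.13 genuinely lets us replay the whole composition inside the collapsed modules), and then track carefully which summand of $\begin{bmatrix} 1 \\ At \end{bmatrix}$ raises the form-degree, so that the two halves of Pascal's rule are assigned to the correct output slots. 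Everything else is an assembly of Corollary 4.13, Proposition 4.12, and the binomial identity.
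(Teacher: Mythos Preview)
Your proof is correct and follows essentially the same route as the paper: induction on $n$, using the identity $(\wedge\otimes 1_{\mathcal{E}})\circ\varphi^{(n)} = (\wedge\otimes 1_{\mathcal{E}})\circ\varphi^{(n)}\circ(\wedge\otimes 1_{\mathcal{E}})$ to obtain the recursion $\varphi^n = (\wedge\otimes 1_{\mathcal{E}})\circ\varphi^{(n)}\circ\varphi^{n-1}$, splitting $\varphi^{(n)}$ into its identity and Atiyah summands, and closing with Pascal's rule. If anything, you are slightly more explicit than the paper in justifying why the Atiyah summand applied to $At^i$ yields $At^{i+1}$.
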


\begin{proof} The base case is clear, now by induction, say $\varphi^{n-1}=\displaystyle\sum\limits_{i=0}^{n-1}\dbinom{n-1}{i}At^i$,
then

\vp{0.2}

\hp{2}$\varphi^n=(\wedge\otimes
1_{\mathcal{E}})\circ\varphi^{(n)}\circ\varphi^{(n-1)}\circ\cdots\circ\varphi$

\vp{0.2}

\hp{2.15} $=(\wedge\otimes 1_{\mathcal{E}})\circ
\varphi^{(n)}\circ(\wedge\otimes
1_{\mathcal{E}})\circ\varphi^{(n-1)}\circ\cdots\circ\varphi$

\vp{0.2}

\hp{2.15} $=(\wedge\otimes 1_{\mathcal{E}})\circ
\varphi^{(n)}\circ\varphi^{n-1}$

\vp{0.2}

\hp{2.15} $=(\wedge\otimes 1_{\mathcal{E}})\circ
\varphi^{(n)}\circ\displaystyle\sum\limits_{i=0}^{n-1}\dbinom{n-1}{i}At^i$

\vp{0.2}

\hp{2.15} $=\begin{bmatrix} I_{2^n} \\
 (\wedge\otimes 1_{\mathcal{E}})\circ At\cdot I_{2^n}
\end{bmatrix}\circ\displaystyle\sum\limits_{i=0}^{n-1}\dbinom{n-1}{i}At^i$

\vp{0.2}

\hp{2.15} $=\displaystyle\sum\limits_{i=0}^{n-1}\dbinom{n-1}{i}At^i+
At(\displaystyle\sum\limits_{i=0}^{n-1}\dbinom{n-1}{i}At^i)$

\vp{0.2}

\hp{2.15} $=\displaystyle\sum\limits_{i=0}^n\dbinom{n}{i}At^i.$ \end{proof}

\subsection{The map $\widetilde{\varphi^n}$}\hp{1}

For any nature number $n$, define $(\Omega^\bullet, df, n)$ to be the complex: $$\xymatrixcolsep{4pc}\xymatrix{ Q \ar[r]^{df\wedge} & \Omega^1 \ar[r]^{df\wedge} & \Omega^2 \ar[r]^{df\wedge} & \cdots \ar[r]^{df\wedge} & \Omega^n }.$$

From now on, we assume in addition that $k\supset \mathbb{Q}$. Under this assumption, there is an isomorphism of complexes $\Omega^{(n)}_{Q,df}\rightarrow (\Omega^\bullet, df,n)$ defined by

$$\xymatrixcolsep{4pc}\xymatrix{ Q \ar[r]^{ndf\wedge} \ar[d]^= & \Omega^1
\ar[r]^{(n-1)df\wedge} \ar[d]^{\frac{1}{n}} & \Omega^2 \ar[r]^{(n-2)df\wedge}
\ar[d]^{\frac{1}{n(n-1)}} & \cdots \ar[r]^{df\wedge} &
\Omega^n \ar[d]^{\frac{1}{n!}} \\
Q \ar[r]^{df\wedge} & \Omega^1 \ar[r]^{df\wedge} & \Omega^2 \ar[r]^{df\wedge} & \cdots
\ar[r]^{df\wedge} & \Omega^n}$$

\vp{0.2}

We compose $\varphi^n$ with the above isomorphism to get a morphism which now called $\widetilde{\varphi^n}$; i.e., $$\widetilde{\varphi^n}: \xymatrixcolsep{2pc}\xymatrix{ \mathcal{E} \ar[r] &
(\Omega^\bullet, df,n)\otimes_{mf} \mathcal{E}}$$

By Proposition 4.16, we have an expression for $\widetilde{\varphi^n}$:

$$\widetilde{\varphi^n}=\displaystyle\sum\limits_{i=0}^n\frac{1}{i!}At^i$$


\section {Chern character for matrix factorizations and its basic
properties}

\vp{0.2}

For a $k$-algebra $Q$, where $k$ is a commutative unital ring that contains $\mathbb{Q}$, we constructed (for any given $n\in \mathbb{N}$) a strict morphism of matrix factorizations $\widetilde{\varphi^n}: \mathcal{E}\rightarrow (\Omega^\bullet,df,n)\otimes_{mf} \mathcal{E}$ in last section. Now, we can define a Chern character for matrix
factorizations. 

\subsection{Supertrace}\hp{1}

\vp{0.1}

Let $Q$ be any commutative ring and $M$ a finitely generated projective $Q$-module. Let $M^*=Hom_Q(M,Q)$ be the dual of $M$. Consider the two maps $$ \xymatrixcolsep{2pc}\xymatrix{ End_Q(M) & M^*\otimes_Q M
\ar[l]_{\xi} \ar[r]^{\epsilon} & Q }$$ given by $\xi(\alpha\otimes n)(m)=\alpha(m)n$, with $m,n\in M, \alpha\in M^*$ and by $\epsilon(\alpha\otimes n)=\alpha(n)$ respectively. If $M$ is a finitely generated projective $Q$-module, then $\xi$ is an isomorphism and the composite $\epsilon\circ\xi^{-1}$ is the standard trace map: $\epsilon\circ\xi^{-1}=tr.$ Suppose that $M$ is free of finite rank over $Q$, with $Q$ a $k$-algebra. Then a $Q$-linear map $M\rightarrow M\otimes_Q\Omega^*_{Q/k}$, upon choice of basis, is a matrix with coefficients in $\Omega^*_{Q/k}$, that is, an element of $End_Q(M)\otimes_Q\Omega^*_{Q/k}$. Since a projective module is a direct summand of a free module, the same is true when $M$ is projective, i.e., $$Hom_Q(M, M\otimes_Q\Omega^*)\cong End_Q(M)\otimes_Q \Omega^*.$$ Therefore, when $\mathcal{M}$ is a matrix factorization, the underlying module $M$ is projective so $At^i$ can be viewed as an element of $End_Q(M)\otimes_Q\Omega^i_{Q/k}$.

\begin{defn}

Given a $\mathbb{Z}/2$-graded finitely generated projective $Q$-module $M$ and an endomorphism $T$ of $M$ of degree $0$, using that $End_Q(M)_0=End_Q(M_0\oplus M_1)_0=End_Q(M_0)\oplus End_Q(M_1)$, define the \emph{supertrace} to be $$str(T):=tr(T_0)-tr(T_1)\in Q$$ where $T=T_0\oplus T_1$ with $T_i\in End_Q(M_i), i=0,1$.

\end{defn} 

\begin{prop} \hp{1}

\begin{enumerate}

\item If $\alpha, \beta: \mathcal{E}\rightarrow\mathcal{E}$ are strict morphisms of matrix factorization and $\alpha$ is homotopic to $\beta$, then $str(\alpha)=str(\beta)$.

\item $str$ is an invariant under cyclic permutations, i.e., $$str(\alpha_1\circ \cdots \circ \alpha_n)=str(\alpha_{\sigma(1)}\circ\cdots\circ \alpha_{\sigma(n)})$$ for $\sigma$ a cyclic permutation of $n$ elements.

\end{enumerate}

\end{prop}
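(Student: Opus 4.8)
The plan is to reduce both statements to the ordinary cyclic invariance of the trace, i.e.\ to the identity $tr(XY)=tr(YX)$ for $Q$-linear maps $X\colon P\to P'$ and $Y\colon P'\to P$ between finitely generated projective $Q$-modules. This identity is standard and follows directly from the description $tr=\epsilon\circ\xi^{-1}$ recalled just before the proposition. Everything else is bookkeeping with the $\mathbb{Z}/2$-grading.

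For (1), I would first use that the supertrace is additive, so $str(\alpha)-str(\beta)=str(\alpha-\beta)$, whence it suffices to show $str(\gamma)=0$ whenever $\gamma=d^{\cE}\circ h+h\circ d^{\cE}$ for an odd homotopy $h$. Writing $d^{\cE}=\begin{bmatrix}0&A\\B&0\end{bmatrix}$ and $h=\begin{bmatrix}0&h_1\\h_0&0\end{bmatrix}$ with $h_0\colon E_0\to E_1$ and $h_1\colon E_1\to E_0$, a direct block multiplication gives that $\gamma$ is the degree-$0$ endomorphism $\begin{bmatrix}Ah_0+h_1B&0\\0&Bh_1+h_0A\end{bmatrix}$, so $str(\gamma)=tr(Ah_0)+tr(h_1B)-tr(Bh_1)-tr(h_0A)$. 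Applying $tr(Ah_0)=tr(h_0A)$ and $tr(h_1B)=tr(Bh_1)$ makes this vanish. The parity of $h$ is exactly what is used here: it forces $\gamma$ to be even, so that $str(\gamma)$ is defined, and it arranges the four terms into the two cancelling cyclic pairs.

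For (2), since every cyclic permutation is a power of the basic one-step shift sending $(\alpha_1,\dots,\alpha_n)$ to $(\alpha_2,\dots,\alpha_n,\alpha_1)$, it suffices by iteration to prove $str(\alpha_1\circ(\alpha_2\circ\cdots\circ\alpha_n))=str((\alpha_2\circ\cdots\circ\alpha_n)\circ\alpha_1)$. Setting $\alpha=\alpha_1$ and $\beta=\alpha_2\circ\cdots\circ\alpha_n$, this is the two-factor identity $str(\alpha\beta)=str(\beta\alpha)$ for degree-$0$ endomorphisms. Decomposing $\alpha=\alpha_0\oplus\alpha_1$ and $\beta=\beta_0\oplus\beta_1$ into their components on $M_0$ and $M_1$, we have $str(\alpha\beta)=tr(\alpha_0\beta_0)-tr(\alpha_1\beta_1)$ and $str(\beta\alpha)=tr(\beta_0\alpha_0)-tr(\beta_1\alpha_1)$, so the claim follows by applying ordinary cyclic trace invariance separately on $M_0$ and on $M_1$.

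Neither part poses a real obstacle; the only thing requiring care is the sign/parity conventions. For (1) one must confirm that the homotopy $h$ is genuinely odd (so that $\gamma$ is even), and for (2) that the $\alpha_i$ are taken to be even endomorphisms, since only then is the stated cyclic identity sign-free --- for odd factors a Koszul sign would appear, giving $str(\alpha\beta)=(-1)^{|\alpha||\beta|}str(\beta\alpha)$. With the degree conventions of the preceding definition of the supertrace these are automatic, and I would record the evenness hypothesis explicitly.
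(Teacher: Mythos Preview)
Your proposal is correct and follows essentially the same approach as the paper: both parts are reduced to the ordinary cyclic invariance $tr(XY)=tr(YX)$, with (1) handled by writing the null-homotopic map in block form and cancelling the four trace terms in pairs, and (2) dismissed as an immediate consequence of that same identity applied componentwise. Your write-up is in fact a bit more careful than the paper's, which is terse to the point of abusing notation in (1) and simply declares (2) obvious; your explicit remark about the evenness hypothesis on the $\alpha_i$ is a worthwhile addition.
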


\begin{proof}

Say $\mathcal{E}=(E_1\darrow{A}{B} E_0)$ \begin{enumerate}

\item There are $Q$-module homomorphisms $x: E_0\rightarrow E_1$ and $y: E_1\rightarrow E_0$ such that $Ax+yB=\alpha_0-\beta_0$ and $By+xA=\alpha_1-\beta_1$. So $$(Ax+yB)-(By+xA)=(\alpha_0-\beta_0)-(\alpha_1-\beta_1)$$ $$(Ax-xA)-(yB-By)=(\alpha_0-\alpha_1)-(\beta_0-\beta_1)$$ $$tr(Ax-xA)-tr(yB-By)=tr(\alpha_0-\alpha_1)-tr(\beta_0-\beta_1)$$ $$(tr(Ax)-tr(xA))-(tr(yB)-tr(By))=0=str(\alpha)-str(\beta)$$ $$str(\alpha)=str(\beta).$$

\item This is obvious since $tr$ is an invariant under cyclic permutations. 

\end{enumerate}

\end{proof}

\subsection{Chern character}\hp{1}

\vp{0.1}

Before making the definition of the Chern character of matrix factorizations, let's first recall the definition of a smooth algebra and prove a few propositions necessary for the definition.

\begin{defn}

\begin{enumerate}

\item If $k$ is an algebraically closed field, then $Q$ is \emph{smooth of relative dimension $d$} if it is of finite type, its dimension is $d$, and the module $\Omega^1_{Q/k}$ of differentials is a finitely generated locally free $Q$-module of rank $d$.

\item Let $k$ be an arbitrary field, $\overline{k}$ its algebraic closure. Then $Q$ is \emph{smooth of relative dimension $d$} if $Q\otimes_k\overline{k}$ is smooth of relative dimension $d$ over $\overline{k}$.

\item Let $\theta: Q\rightarrow Q'$ be a ring map, then $\theta$ is \emph{smooth of relative dimension $d$} if it is flat, finitely presented, and for all primes $\fp$ of $Q$, the fibre ring $k(\fp)\otimes_Q Q'$ is smooth of relative dimension $d$ over $k(\fp)$, where $k(\fp)$ is the residue field at $\fp$.

\end{enumerate}

\end{defn}

\begin{prop}

Suppose $Q$ is a smooth $k$-algebra of relative dimension $d$ where $k$ is a commutative unital ring that contains $\mathbb{Q}$. Then $str(\widetilde{\varphi^d})=str(\widetilde{\varphi^{d+1}})=\cdots$.

\end{prop}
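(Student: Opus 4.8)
The plan is to reduce the whole statement to the explicit formula $\widetilde{\varphi^n}=\sum_{i=0}^n\frac{1}{i!}At^i$ established at the end of Section 4, combined with the vanishing of high differential forms on a smooth algebra. First I would use that the supertrace is additive together with the identification (recorded in the discussion preceding the definition of the supertrace) of each $At^i$ as an element of $\End_Q(M)\otimes_Q\Omega^i_{Q/k}$, so that $str(At^i)\in\Omega^i_{Q/k}$ and
\[
str(\widetilde{\varphi^n})=\sum_{i=0}^n\frac{1}{i!}\,str(At^i).
\]
Here I read $str(\widetilde{\varphi^n})$ componentwise through the isomorphism $\Hom_Q(M,\Omega^i\otimes_Q M)\cong\End_Q(M)\otimes_Q\Omega^i$, so that each $str(\widetilde{\varphi^n})$ lands in the single graded module $\bigoplus_{i\geqslant 0}\Omega^i_{Q/k}$; this is exactly what makes the asserted chain of equalities meaningful, since otherwise the various $\widetilde{\varphi^n}$ map into genuinely different matrix factorizations.

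The heart of the argument is the claim that $\Omega^i_{Q/k}=0$ for every $i>d$. Because $Q$ is smooth of relative dimension $d$, the module $\Omega^1_{Q/k}$ is finitely generated and locally free of rank $d$ (this is literally part of the definition in the field case, and for a smooth ring map it is the standard fact that the module of relative differentials of a smooth morphism is finite projective of rank equal to the relative dimension). Consequently $\Omega^i_{Q/k}=\wedge^i_Q\Omega^1_{Q/k}$ is again locally free, and I would verify its vanishing by localizing at each prime $\mathfrak{p}$: there $(\Omega^1_{Q/k})_{\mathfrak{p}}$ is free of rank $d$ over $Q_{\mathfrak{p}}$, and since localization commutes with exterior powers, $(\Omega^i_{Q/k})_{\mathfrak{p}}=\wedge^i(\Omega^1_{Q/k})_{\mathfrak{p}}=0$ as soon as $i>d$. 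A finitely generated module that is zero at every prime is zero, so $\Omega^i_{Q/k}=0$ for all $i>d$.

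With this vanishing in hand the conclusion is immediate. For every $i>d$ we have $At^i\in\End_Q(M)\otimes_Q\Omega^i_{Q/k}=0$, hence $str(At^i)=0$; so for any $n\geqslant d$,
\[
str(\widetilde{\varphi^n})=\sum_{i=0}^n\frac{1}{i!}\,str(At^i)=\sum_{i=0}^d\frac{1}{i!}\,str(At^i),
\]
which is manifestly independent of $n$, giving $str(\widetilde{\varphi^d})=str(\widetilde{\varphi^{d+1}})=\cdots$.

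I expect the only genuine obstacle to be the clean justification of $\Omega^i_{Q/k}=0$ for $i>d$ directly from whichever clause of the smoothness definition applies when $k$ is a general $\mathbb{Q}$-algebra rather than a field; everything else is purely formal, resting on additivity of the trace and on the remark that the values $str(\widetilde{\varphi^n})$ all lie in the same finite graded module $\bigoplus_{i=0}^d\Omega^i_{Q/k}$, so that comparing them across different $n$ is harmless.
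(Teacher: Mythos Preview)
Your proposal is correct and follows essentially the same approach as the paper: both use the expansion $\widetilde{\varphi^n}=\sum_{i=0}^n\frac{1}{i!}At^i$ together with the vanishing $\Omega^i_{Q/k}=0$ for $i>d$ to conclude that the extra summands in $str(\widetilde{\varphi^{d+1}}),str(\widetilde{\varphi^{d+2}}),\ldots$ are zero. The paper's proof simply asserts this vanishing in one line, whereas you supply the localization argument justifying it; otherwise the two are the same.
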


\begin{proof}

We have $str(At^{d+1})\in\Omega^{d+1}\otimes_{mf} \cE=0$. Therefore $$str(\widetilde{\varphi^{d+1}})=\displaystyle\sum\limits_{i=0}^{d+1}\frac{1}{i!}str(At^i)=1+str(At)+\cdots+\frac{1}{d!}str(At^d)+\frac{1}{(d+1)!}str(At^{d+1})$$

\hp{2.27} $=1+str(At)+\cdots+\dfrac{1}{d!}str(At^d)+0$

\vp{0.1}

\hp{2.27} $=str(\widetilde{\varphi^d})$

i.e., $str(\widetilde{\varphi^d})=str(\widetilde{\varphi^{d+1}})$ and hence $str(\widetilde{\varphi^d})=str(\widetilde{\varphi^{d+i}})$ for any $i\geqslant 1$. \end{proof}

\begin{prop}

Given any matrix factorization $\cE=(E_1\xrightarrow{A} E_0\xrightarrow{B} E_1)\in MF(Q,f), df\wedge str(At^i_{\cE})=0$ in $\Omega^{i+1}$ for any $i$. If $i$ is an odd integer, $str(At^i_{\cE})=0$.

\end{prop}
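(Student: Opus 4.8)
The plan is to treat the parity (odd-$i$) statement first, since it simultaneously settles the first assertion for odd $i$, and then to handle the remaining even case of the first assertion by a local computation. The starting observation is that the Atiyah class is an \emph{odd} endomorphism of the $\mathbb{Z}/2$-graded module $E = E_0\oplus E_1$: in $At = \nabla\circ d - (1\otimes d)\circ\nabla$ the connection $\nabla = \mathrm{diag}(\nabla_0,\nabla_1)$ preserves the grading (it is even, merely raising form degree by one) while $d$ is odd, so both terms, and hence $At$ itself, are odd. Consequently $At^i$, regarded as an element of $\mathrm{End}_Q(E)\otimes_Q\Omega^i$, is off-diagonal whenever $i$ is odd, its diagonal blocks vanish, and $str(At^i) = tr((At^i)_0) - tr((At^i)_1) = 0$. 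This proves the second assertion, and with it the first assertion for odd $i$, since then $df\wedge str(At^i) = df\wedge 0 = 0$.

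It remains to prove $df\wedge str(At^i) = 0$ for $i$ even, say $i = 2m$ with $m\ge 1$. Because the supertrace is homotopy invariant (Proposition 5.2) and $\widetilde{\varphi^n}$ is independent of the connection up to homotopy, the degree-$i$ component $str(At^i)\in\Omega^i$ is a well-defined global form not depending on $\nabla$; and since an element of $\Omega^{i+1}$ vanishes iff it vanishes in every localization, I may localize at a prime, assume $E_0, E_1$ are free, and compute with the trivial connection $\nabla_j = d$. The explicit free-module formula for $At^i$ (Example 4.7) then shows that $str(At^i)$ is a $\mathbb{Z}$-linear combination of $tr((dA\,dB)^m)$ and $tr((dB\,dA)^m)$, so it suffices to prove $df\wedge tr((dA\,dB)^m) = 0$, the other term being symmetric.

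The essential input is the pair of matrix identities obtained by applying $d$ to $AB = f\cdot I$ and to $BA = f\cdot I$, namely $df\cdot I = A\,dB + dA\cdot B = dB\cdot A + B\,dA$ (the first half is recorded in Example 4.7). Since $df\cdot I$ is a central $1$-form, I have $df\wedge tr((dA\,dB)^m) = tr\big((df\cdot I)(dA\,dB)^m\big)$; substituting $df\cdot I = A\,dB + dA\cdot B$ and then moving the leading factors around by the graded (Koszul-signed) cyclicity of the trace of matrices of forms, I would reorganize this into $tr\big((dB\cdot A + B\,dA)(dB\,dA)^m\big) = tr\big((df\cdot I)(dB\,dA)^m\big) = df\wedge tr((dB\,dA)^m)$. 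On the other hand, a single cyclic rotation by one factor gives $tr((dA\,dB)^m) = -\,tr((dB\,dA)^m)$, the sign $(-1)^{1\cdot(2m-1)}$ arising from passing the leading $1$-form matrix through the remaining $2m-1$. Combining the two relations, $df\wedge tr((dA\,dB)^m) = df\wedge tr((dB\,dA)^m) = -\,df\wedge tr((dA\,dB)^m)$, and since $\tfrac12\in k$ this forces $df\wedge tr((dA\,dB)^m) = 0$, hence $df\wedge str(At^i)=0$.

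I expect the only genuine obstacle to be the careful tracking of Koszul signs in the cyclicity manipulations of the previous paragraph; everything else is formal. Two side issues also deserve a word: one must confirm that $str(At^i)$ is connection-independent as an actual form (so that the reduction to the free local model is legitimate), which follows from Proposition 5.2 applied degreewise to $\widetilde{\varphi^n}$; and the degenerate case $i = 0$, excluded above, where $str(At^0) = \mathrm{rk}\,E_0 - \mathrm{rk}\,E_1$ so that $df\wedge str(At^0)=0$ amounts to $\mathrm{rk}\,E_0 = \mathrm{rk}\,E_1$ — true for a matrix factorization of a nonzerodivisor, since localizing where $f$ is invertible makes $A$ and $B$ mutually inverse isomorphisms.
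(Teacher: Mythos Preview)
Your argument is correct, and for the even case it is a genuinely different route from the paper's. Both proofs handle odd $i$ the same way (the Atiyah class is odd, so its odd powers are off-diagonal). For even $i=2m$ the paper first inverts $f$ so that $A$ becomes invertible, substitutes $B=fA^{-1}$ to write $dB = df\,A^{-1} - fA^{-1}dA\,A^{-1}$, expands $df\wedge str(At^{2m})$, and kills the surviving term as the trace of an odd-length cyclic word in $dA\,A^{-1}$. You instead stay over $Q$: from $AB=BA=f\cdot I$ you extract the \emph{two} identities $df\cdot I = A\,dB + dA\,B = dB\,A + B\,dA$ and, via graded cyclicity of the trace, turn $df\wedge tr((dA\,dB)^m)$ into $df\wedge tr((dB\,dA)^m)$; combined with $tr((dA\,dB)^m)=-tr((dB\,dA)^m)$ and $\tfrac12\in k$ this finishes. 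Your approach avoids the localization $Q\to Q[1/f]$ and the (unproved in the paper) injectivity of $\Omega^{i+1}_{Q/k}\to\Omega^{i+1}_{Q[1/f]/k}$, at the cost of using that $2$ is invertible---which is already in force since $k\supseteq\mathbb{Q}$. One small caveat: your appeal to Proposition~5.2 for connection-independence of $str(At^i)$ is not literally what that proposition says (it concerns endomorphisms of $\cE$, not maps $\cE\to(\Omega^\bullet,df,n)\otimes_{mf}\cE$); the needed extension---that the $\Omega^\bullet$-valued supertrace annihilates homotopies---is true and easy, and the paper's proof relies on the same unstated fact when it passes to the local free model with the trivial connection.
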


\begin{proof}

For the underlying finitely generated projective $Q$-module $E=E_0\oplus E_1$ of $\cE$, the trace homomorphism $tr$ is $End(E)\otimes \Omega^{\bullet}\rightarrow\Omega^{\bullet}$, so $str(At^i_{\cE})\in \Omega^i$ since it's the differentce of two elements in $\Omega^i$.

It's enough to check this locally, so we adopt the notations used in Example 4.7. In particular, since the map $\Omega^i_{Q/k}\rightarrow \Omega^{i+1}_{Q[\frac{1}{f}]/k}$ is injective, it suffices to check $df\wedge str(At^i_{\cE})=0$ after inverting $f$. Therefore we may assume $A$ and $B$ are invertible matrices with entries in $Q[\frac{1}{f}]$. Notice that $str(At^i_{\cE})=0$ when $i$ is odd so we just need to show this when $i$ is an even integer. 

Since $A$ is invertible, $B=f\cdot A^{-1}$, therefore $$dB=df\cdot A^{-1}+f\cdot dA^{-1}$$ since $A^{-1}\cdot A=I$ we have $dA^{-1}=-A^{-1}dA\cdot A^{-1}$ hence $$dB=df\cdot A^{-1}-fA^{-1}dA\cdot A^{-1}$$

Also, since $df=dA\cdot B+A\cdot dB$ and say $i=2l$, $$df\wedge str(At^i_{\cE})=2tr(df\overbrace{dAdBdAdB\cdots dAdB}^i)$$ $$=2tr(dfdA(df\cdot A^{-1}-fA^{-1}dA\cdot A^{-1})\cdots dA(df\cdot A^{-1}-fA^{-1}dA\cdot A^{-1}))$$ $$=2tr(dfdA(-fA^{-1}dA\cdot A^{-1})\cdots dA(-fA^{-1}dA\cdot A^{-1}))$$ $$=(-1)^l2tr(\underbrace{dA\cdot A^{-1}\cdots dA\cdot A^{-1}}_{\text{even numbers}})=0$$ 

We have the last equality because switching matrices of odd forms is only going to introduces a sign. Also, the product $(dA\cdot A^{-1})^{\text{even}}=(dA\cdot A^{-1})^{\text{odd}}\cdots(dA\cdot A^{-1})^{\text{odd}}$ stays the same no matter how you switch, therefore it has to be zero. \end{proof}

By the above proposition, we know that $str(At^i_{\cE})$ vanishes when $i$ is odd and is a cycle (of the complex $(\Omega^{\bullet}, df, n)$) when $i$ is even, so it defines an element of the homology. 

Now we are ready to give our definition of the Chern character. Assume that $Q$ is now a smooth $k$-algebra of relative dimension $n$ with $k$ a commutative unital ring that contains $\mathbb{Q}$.

\begin{defn}

We define the {\em Chern character} of $\cE\in MF(Q,f)$ to be $$ch(\cE):=str(\widetilde{\varphi^n_{\cE}})=\displaystyle\sum\limits_{i=0}^n\frac{1}{i!}str(At^i_{\cE})\in H_0((\Omega^{\bullet}, df, n)_{\mathbb{Z}/2})=\bigoplus\limits_{i=0}^n \dfrac{ker(\Omega^{2i}\xrightarrow{df\wedge}\Omega^{2i+1})}{Im(\Omega^{2i-1}\xrightarrow{df\wedge}\Omega^{2i})}.$$

\end{defn}

Recall that $(\Omega^{\bullet}, df, n)$ is the complex $\xymatrixcolsep{3pc}\xymatrix{ Q \ar[r]^{df\wedge} & \Omega^1 \ar[r]^{df\wedge} & \Omega^2 \ar[r]^{df\wedge} & \cdots \ar[r]^{df\wedge} & \Omega^n }$ and $(\Omega^{\bullet}, df, n)_{\mathbb{Z}/2}$ is the $\mathbb{Z}/2$-folding of this complex. Also, notice that by Proposition 5.5, $str(At^i_|{\cE})=0$ when $i$ is an odd integer. Therefore the Chern character is in fact $ch(\cE)=\sum\limits_{i\geqslant 0} \dfrac{1}{(2i)!} str(At^{2i}_{\cE})$.

Our definition of the Chern is the same as the one in Platt \cite{platt2012chern}. For the special case when $Q=k[[x_1,\cdots, x_n]]$ and $f\in Q$ an isolated singularity, see the following example.

\begin{ex}

Let $f\in Q=k[[x_1,\cdots,x_n]]$ be an isolated singularity at the origin (that is, the localizations at every prime except the maximal ideal $\fm=(x_1,\cdots, x_n)$ is regular), $E=(Q^r \darrow{A}{B} Q^r)$ a matrix factorization with $d=\begin{bmatrix} 0 & A \\B & 0 \end{bmatrix}$. $E$ is a free $Q$-module, so as before, we can choose the exterior differential $d$ to be the connections and we get that $At_{\cE}=\begin{bmatrix} 0 & dA\\ dB & 0 \end{bmatrix}$. Also, notice that in this situation $(\Omega^{\bullet}, df, n)$ is exact except in position $n$ (the dimension of $Q$) \cite{walker2013support}. Therefore we have that $ch(\cE)=\displaystyle\sum\limits_{i=0}^n\frac{1}{i!}str(At^i_{\cE})=\frac{1}{n!}str(dAdB\cdots dAdB)$, which agrees with the Chern character obtained by \cite{kapustin2003topological}, \cite{platt2012chern} and \cite{segal2009closed}. It differs by a sign with the ones in \cite{carqueville2012adjunctions}, \cite{dyckerhoff2010kapustin}, \cite{dyckerhoff2011pushing} and \cite{polishchuk2010chern}. 

\end{ex}

\begin{prop}

Given matrix factoriztions $\cE=(E_1\xrightarrow{A} E_0\xrightarrow{B} E_1)$ and $\cE'=(E'_1\xrightarrow{C} E'_0\xrightarrow{S} E'_1)$in MF(Q,f), a strict morphism $\b: \cE' \rightarrow \cE$, then the following diagram commute up to homotopy $$\xymatrixcolsep{5pc}\xymatrix{ \cE' \ar[d]^{\b} \ar[r]^{\widetilde{\varphi_{\cE}}} & (Q\xra{df\wedge}\Omega^1)\otimes_{mf} \cE'\ar[d]^{1\otimes\beta}\\
\cE \ar[r]^{\widetilde{\varphi_{\cE'}}} & (Q\xra{df\wedge}\Omega^1)\otimes_{mf} \cE }$$

\end{prop}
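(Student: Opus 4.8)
The plan is to fix connections $\nabla$ on $E=E_0\oplus E_1$ and $\nabla'$ on $E'=E'_0\oplus E'_1$, compute the two composites around the square, and exhibit an explicit homotopy between them. Since the target is $\cE^{(1)}=(Q\xra{df\wedge}\Omega^1)\otimes_{mf}\cE$, the map $\widetilde{\varphi}$ at this level is just $\varphi_{\cE}=\begin{bmatrix}1\\At_{\cE}\end{bmatrix}$ (for $n=1$ the rescaling isomorphism of Section 4.3 is the identity), and by Proposition 4.6 the choice of connection changes each $\varphi$ only up to homotopy, so it suffices to produce one homotopy for one choice. Thus I must show that the two strict morphisms $(1\otimes\beta)\circ\varphi_{\cE'}$ and $\varphi_{\cE}\circ\beta$ from $\cE'$ to $\cE^{(1)}$ are homotopic.

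First I would compute both composites. Writing $\varphi_{\cE'}=\begin{bmatrix}1\\At_{\cE'}\end{bmatrix}$ and using that $1\otimes\beta$ acts as $\beta$ on the $E'$-summand and as $1_{\Omega^1}\otimes\beta$ on the $\Omega^1\otimes E'$-summand, one gets $(1\otimes\beta)\circ\varphi_{\cE'}=\begin{bmatrix}\beta\\(1\otimes\beta)At_{\cE'}\end{bmatrix}$ while $\varphi_{\cE}\circ\beta=\begin{bmatrix}\beta\\At_{\cE}\,\beta\end{bmatrix}$. The two agree in the first ($\cE$-valued) component, so their difference lies entirely in the $\Omega^1\otimes_Q E$ summand of $\cE^{(1)}$ and equals $(1\otimes\beta)At_{\cE'}-At_{\cE}\,\beta$.

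The key step is to factor this difference. Expanding $At_{\cE'}=\nabla'd^{\cE'}-(1\otimes d^{\cE'})\nabla'$ and $At_{\cE}=\nabla d^{\cE}-(1\otimes d^{\cE})\nabla$, and using strictness of $\beta$ in the form $\beta\, d^{\cE'}=d^{\cE}\beta$ (so that $(1\otimes\beta)(1\otimes d^{\cE'})=(1\otimes d^{\cE})(1\otimes\beta)$), a short manipulation gives
$$(1\otimes\beta)\,At_{\cE'}-At_{\cE}\,\beta=\Delta\circ d^{\cE'}-(1\otimes d^{\cE})\circ\Delta,\qquad \Delta:=(1\otimes\beta)\circ\nabla'-\nabla\circ\beta.$$
Exactly as in the proof of Proposition 4.6, the two Leibniz terms cancel, so $\Delta\colon E'\to\Omega^1\otimes_Q E$ is $Q$-linear; moreover $\Delta$ carries $E'_i$ into $\Omega^1\otimes E_i$, hence shifts the $\mathbb{Z}/2$-grading by one. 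It is therefore an odd $Q$-linear map of exactly the right degree to serve as a homotopy into the $\Omega^1\otimes E$ summand of $\cE^{(1)}$.

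Finally I would take $h:=\Delta$, viewed as a map $\cE'\to\cE^{(1)}$ landing in the second summand, and check that $d^{\cE^{(1)}}\circ h+h\circ d^{\cE'}$ equals the difference above. Here one uses the explicit differentials $\overline{A}=\begin{bmatrix}A&0\\df\wedge&-B\end{bmatrix}$ and $\overline{B}=\begin{bmatrix}B&0\\df\wedge&-A\end{bmatrix}$ of $\cE^{(1)}$: because $h$ vanishes in the first component, the $df\wedge$ entries contribute nothing, the lower-right entries $-(1\otimes B)$ and $-(1\otimes A)$ produce precisely $-(1\otimes d^{\cE})\circ\Delta$, and the term $h\circ d^{\cE'}$ produces $\Delta\circ d^{\cE'}$; summing gives the desired difference. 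I expect the only delicate point to be the bookkeeping of signs and of the $\mathbb{Z}/2$-degrees in this last matrix verification. The conceptual content is entirely in recognizing $\Delta=(1\otimes\beta)\nabla'-\nabla\beta$ as the homotopy and in verifying its $Q$-linearity, both of which mirror Proposition 4.6.
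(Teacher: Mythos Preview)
Your proposal is correct and follows essentially the same route as the paper's proof: both take the homotopy to be the $Q$-linear map landing in the $\Omega^1\otimes E$ summand built from the difference of connections along $\beta$, and both verify the homotopy identity using strictness of $\beta$. Your $\Delta=(1\otimes\beta)\nabla'-\nabla\beta$ is precisely the negative of the paper's $\psi_i=\nabla_i\beta-(1\otimes\beta)\nabla'_i$, consistent with your taking the difference of the two composites in the opposite order; the paper carries out the check componentwise on the second row of $\overline A$, whereas you package it abstractly as $\Delta d^{\cE'}-(1\otimes d^{\cE})\Delta$, but the content is identical.
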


\begin{proof}

Recall that $\widetilde{\varphi_{\cE}}=\begin{bmatrix} 1 \\ At_{\cE} \end{bmatrix}=1+At_{\cE}$.

Choose connections $\nabla_i$ and $\nabla_i'$, then we can construct module homomorphisms $\psi_0 = \begin{bmatrix}
   0 \\ (\nabla_{0}\beta-(1\otimes\beta)\nabla_{0}^{'})
\end{bmatrix},
 \psi_1 = \begin{bmatrix}
    0 \\ (\nabla_{1}\beta-(1\otimes\beta)\nabla_{1}^{'})
\end{bmatrix}$, which lives in the diagram

\begin{displaymath}
    \xymatrixcolsep{14pc}\xymatrix{
        E_1^{'} \ar[d] \ar[r]^C  & E_0^{'} \ar@{.>}[ld]_{\psi_0} \ar[d]^{\widetilde{\varphi_{\cE}}\circ\beta-(1\otimes\beta)\circ
\widetilde{\varphi_{\cE'}}} \ar[r]^{D}  & E_1^{'}  \ar@{.>}[ld]^{\psi_1} \ar[d]  \\
        \voplus{E_1}{\Omega^{1}\otimes E_0} \ar[r]_{\overline{A}} & \voplus{E_0}{\Omega^{1}\otimes E_1} \ar[r]_{\overline{B}} & \voplus{E_1}{\Omega^{1}\otimes E_0} }
\end{displaymath}

where $\widetilde{\varphi_{\cE}}\circ\beta-(1\otimes\beta)\circ
\widetilde{\varphi_{\cE'}}$
is the matrix $\begin{bmatrix} 0 \\
At_{\cE}\circ\beta-(1\otimes\beta)\circ At_{\cE'}
\end{bmatrix}$.

\vp{0.2}

First of all $\psi_0$ and $\psi_1$ are indeed module homomorphism:

\vp{0.1}

\hp{0.95} $\psi_0(q\cdot x)=\nabla_{0}\beta(q\cdot
x)-(1\otimes\beta)\nabla_{0}^{'}(q\cdot x)$

\vp{0.1}

\hp{1.5} $=\nabla_{0}(q\cdot\beta(x))-(1\otimes\beta)(dq\wedge
x+q\cdot\nabla_{0}^{'}(x))$

\vp{0.1}

\hp{1.5}
$=dq\wedge\beta(x)+q\cdot\nabla_{0}\beta(x)-dq\wedge\beta(x)-(1\otimes\beta)(q\cdot\nabla_{0}^{'}(x))$

\vp{0.1}

\hp{1.5}
$=q\cdot\nabla_{0}\beta(x)-q\cdot(1\otimes\beta)(\nabla_{0}^{'}(x))$

\vp{0.1}

\hp{1.5} $=q(\nabla_{0}\beta-(1\otimes\beta)\nabla_{0}^{'})(x)$

\vp{0.1}

\hp{1.5} $=q\cdot\psi_0(x)$

\vp{0.1}

The same argument shows that $\psi_1$ is also a module homomorphism.

\vp{0.2}

We want to show that $\psi_0$ and $\psi_1$ give us a homotopy. For the degree $0$ part, we need to show $(\widetilde{\varphi_{\cE}}\circ\beta-(1\otimes\beta)\circ
\widetilde{\varphi_{\cE'}})_0=\overline{A}\circ \psi_0+\psi_1\circ D$.

\vp{0.2}

Recall that $\overline{A} = \begin{bmatrix} A & 0 \\
df\wedge & -B \end{bmatrix} $ and elements are $2\times 1$ column vectors, the equality in the first row is easy so we just need to check the equality for the second row. Hence,

\hp{1.8} $(At_{\cE}\circ\beta-(1\otimes\beta)\circ At_{cE'})_0$

\vp{0.1}

\hp{1.7}
$=(\nabla_{1}B-B\nabla_{0})\beta-(1\otimes\beta)(\nabla_{1}^{'}D-D\nabla_{0}^{'})$

\vp{0.1}

\hp{1.7}
$=\nabla_{1}B\beta-B\nabla_{0}\beta-(1\otimes\beta)\nabla_{1}^{'}D+(1\otimes\beta)D\nabla_{0}^{'}$

\vp{0.1}

\hp{1.7} $=\nabla_{1}\beta
D-B\nabla_{0}\beta-(1\otimes\beta)\nabla_{1}^{'}D+B(1\otimes\beta)\nabla_{0}^{'}$

\vp{0.1}

\hp{1.7}
$=(\nabla_{1}\beta-(1\otimes\beta)\nabla_{1}^{'})D-B(\nabla_{0}\beta-(1\otimes\beta)\nabla_{0}^{'})$

\vp{0.1}

\hp{1.7} $=\psi_1\circ D+\overline{A}\circ \psi_0$

In the above calculation, we use $B\beta=\beta D$ and
$(1\otimes\beta)D=B(1\otimes\beta)$ by the fact that $\beta$ is a
strict morphism of matrix factorizations, i.e., the following
commutative diagram:

\begin{displaymath}
    \xymatrix{
        E_1^{'} \ar[d]_\beta \ar[r]^C  & E_0^{'} \ar[d]_\beta \ar[r]^D  & E_1^{'} \ar[d]_\beta  \\
        E_1 \ar[r]^A & E_0\ar[r]^B & E_1 }
\end{displaymath} 

\end{proof}

\begin{cor}

Under the same hypothesis as in Proposition 5.8, we have $\widetilde{\varphi_{\cE}^n}\circ\beta\sim(1\otimes1\otimes\cdots\otimes 1\otimes\beta)\circ\widetilde{\varphi_{\cE'}^{n}}$.

\end{cor}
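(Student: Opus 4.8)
The plan is to prove the $n$-fold statement $\widetilde{\varphi_{\cE}^n}\circ\beta\sim(1\otimes\cdots\otimes 1\otimes\beta)\circ\widetilde{\varphi_{\cE'}^n}$ by bootstrapping from the $n=1$ case, which is exactly Proposition 5.8. Recall from Corollary 4.20 that $\widetilde{\varphi^n}=\sum_{i=0}^n\frac{1}{i!}At^i$, so after unwinding the isomorphism $\Omega^{(n)}_{Q,df}\to(\Omega^\bullet,df,n)$ the claim reduces to showing, for each $i$, that $At_{\cE}^i\circ\beta$ is homotopic to $(1\otimes\beta)\circ At_{\cE'}^i$ (where on the left $1\otimes\beta$ abbreviates $1_{\Omega^i}\otimes\beta$, with the convention introduced after Proposition 4.17 of suppressing the identity tensor factors). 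Since homotopy is compatible with the $\frac{1}{i!}$ scalars (we are working over $k\supseteq\mathbb Q$) and with finite sums, it suffices to establish the single-degree homotopy $At_{\cE}^i\circ\beta\sim(1\otimes\beta)\circ At_{\cE'}^i$ for each $i$ and then add up the homotopies.

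First I would set up the induction on $i$. The base case $i=1$ is Proposition 5.8, which exhibits explicit homotopy operators $\psi_0,\psi_1$ built from the connection differences $\nabla_j\beta-(1\otimes\beta)\nabla_j'$. For the inductive step I would use the factorization $At_{\cE}^i=(\wedge\otimes 1_{\cE})\circ(1_{\Omega^{i-1}}\otimes At_{\cE})\circ At_{\cE}^{i-1}$ coming from Definition 4.2 together with the compatibility of $\wedge$ with $At$ proved in Proposition 4.16. The idea is to insert $\beta$ at the right end of the composite and slide it leftward one Atiyah factor at a time: the inductive hypothesis moves $\beta$ past the first $i-1$ copies of $At$ up to a homotopy, and Proposition 5.8 (tensored with the identity on $\Omega^{i-1}[i-1]$) moves it past the remaining copy. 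Concretely, $At_{\cE}^i\circ\beta=(\wedge\otimes 1)\circ(1_{\Omega^{i-1}}\otimes At_{\cE})\circ(At_{\cE}^{i-1}\circ\beta)$, and applying the inductive homotopy inside the last factor, then the base-case homotopy tensored by $1_{\Omega^{i-1}}$, transports $\beta$ all the way to the left.

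The main obstacle will be bookkeeping the homotopies through the tensor and wedge operations rather than any genuinely new idea. Two points deserve care. First, a homotopy $\alpha\sim\alpha'$ is preserved under post- and pre-composition with strict morphisms and under tensoring by an identity (this is the same stability used throughout Section 4, e.g.\ in the proof of Corollary 4.12), so each step of the slide keeps everything within the same homotopy class; I would state this stability once and invoke it. Second, the wedge map $\wedge\otimes 1_{\cE}$ is itself strict (Corollary 4.14), so composing the accumulated homotopy with it is legitimate and produces a homotopy in $\Omega^{(n)}_{Q,df}\otimes_{mf}\cE$; transporting along the isomorphism to $(\Omega^\bullet,df,n)\otimes_{mf}\cE$ only rescales the homotopy operators by the nonzero constants $\tfrac{1}{n!},\dots$, which is harmless over $\mathbb Q$. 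Assembling the per-degree homotopies with weights $\frac{1}{i!}$ then yields the single homotopy $\widetilde{\varphi_{\cE}^n}\circ\beta\sim(1\otimes\cdots\otimes 1\otimes\beta)\circ\widetilde{\varphi_{\cE'}^n}$, completing the proof.
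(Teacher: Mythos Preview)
Your proposal has a genuine gap. The decomposition $\widetilde{\varphi^n}=\sum_{i=0}^n\frac{1}{i!}At^i$ is correct as an equality of $Q$-linear maps, but the individual terms $At^i$ are \emph{not} strict morphisms of matrix factorizations. This is precisely the point stressed in the introduction and in Section~4: $At$ alone fails to commute with the differentials (in the free case one computes $d\circ At + At\circ d = df\cdot\mathrm{id}\neq 0$), and only the combination $\varphi=1+At$ is strict. Consequently your inductive step breaks down. You want to post-compose the inductive homotopy $At_{\cE}^{i-1}\circ\beta-(1\otimes\beta)\circ At_{\cE'}^{i-1}=dh+hd$ with $(\wedge\otimes 1)\circ(1_{\Omega^{i-1}}\otimes At_{\cE})$, but since $1\otimes At_{\cE}$ is not strict the expression $\delta(dh+hd)$ picks up an extra commutator term $[\delta,d]\circ h$ and is no longer of the form $dh'+h'd$. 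The stability principle you invoke (``homotopy is preserved under composition with strict morphisms'') is exactly what fails here.

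The paper's argument avoids this entirely by never breaking $\widetilde{\varphi^n}$ into its $At^i$ summands. Instead it uses the factorization of $\widetilde{\varphi^n}$ as a composition of \emph{strict} morphisms (the $\varphi^{(i)}$'s of Corollary~4.9, the wedge map of Corollary~4.11, and the rescaling isomorphism), and inducts by peeling off the innermost factor: $\widetilde{\varphi_{\cE}^n}\circ\beta=\widetilde{\varphi_{\cE}^{n-1}}\circ\widetilde{\varphi_{\cE}}\circ\beta\sim\widetilde{\varphi_{\cE}^{n-1}}\circ(1\otimes\beta)\circ\widetilde{\varphi_{\cE'}}$, where the homotopy comes from Proposition~5.8 and is then legitimately post-composed with the strict map $\widetilde{\varphi_{\cE}^{n-1}}$. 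Your plan can be repaired by abandoning the per-degree $At^i$ induction and running the same slide on the strict factorization instead.
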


\begin{proof}

Indeed, $$\widetilde{\varphi_{\cE}^n}\circ\beta=\widetilde{\varphi_{\cE}^{n-1}}\circ\widetilde{\varphi_{\cE}}\circ\beta\sim \widetilde{\varphi_{\cE}^{n-1}}\circ(1\otimes\beta)\circ\widetilde{\varphi_{\cE'}}$$ then by induction $$\widetilde{\varphi_{\cE}^n}\circ\beta\sim(1\otimes1\otimes\cdots\otimes 1\otimes\beta)\circ\widetilde{\varphi_{\cE'}^{n}}.$$

\end{proof}

\begin{cor}

We have $ch(\cE)=ch(\cE')$ for homotopy equivalent matrix factorizations $\cE$ and $\cE'$ of $MF(Q,f)$.

\end{cor}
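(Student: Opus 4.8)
I want to show that homotopy-equivalent matrix factorizations $\cE$ and $\cE'$ have equal Chern characters, i.e.\ $ch(\cE)=ch(\cE')$.

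**The plan.** The strategy is to reduce everything to the two structural facts already in hand: the naturality statement of Corollary 5.10, and the homotopy-invariance of the supertrace from Proposition 5.2(1). Suppose $\beta:\cE'\to\cE$ is a homotopy equivalence, with homotopy inverse $\gamma:\cE\to\cE'$, so that $\beta\circ\gamma\sim 1_{\cE}$ and $\gamma\circ\beta\sim 1_{\cE'}$. Recall that $ch(\cE)=str(\widetilde{\varphi^n_{\cE}})$, where $n$ is the relative dimension of $Q$. The idea is to manufacture, out of $\widetilde{\varphi^n_{\cE}}$ and $\beta,\gamma$, a suitable endomorphism whose supertrace computes $ch(\cE')$, and then invoke the cyclic-invariance and homotopy-invariance of $str$.

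**Key steps.** First I would write $ch(\cE)=str(\widetilde{\varphi^n_{\cE}})$ and insert the identity $1_{\cE}\sim \beta\circ\gamma$ to get
$$str(\widetilde{\varphi^n_{\cE}})=str(\widetilde{\varphi^n_{\cE}}\circ\beta\circ\gamma),$$
using Proposition 5.2(1) (homotopy invariance of $str$), after checking that precomposition by the homotopy $\beta\circ\gamma\sim 1_{\cE}$ keeps us within a homotopy class of strict endomorphisms of the appropriate matrix factorization $(\Omega^\bullet,df,n)\otimes_{mf}\cE$. Next, Corollary 5.10 gives the naturality relation
$$\widetilde{\varphi^n_{\cE}}\circ\beta\sim(1\otimes\cdots\otimes 1\otimes\beta)\circ\widetilde{\varphi^n_{\cE'}},$$
so substituting this in (again using Proposition 5.2(1)) yields
$$str(\widetilde{\varphi^n_{\cE}})=str\bigl((1\otimes\cdots\otimes\beta)\circ\widetilde{\varphi^n_{\cE'}}\circ\gamma\bigr).$$
Finally I would apply cyclic invariance of $str$ (Proposition 5.2(2)) to move $\gamma$ to the front and regroup it with $(1\otimes\cdots\otimes\beta)$; using that the supertrace only sees the $\cE$-tensor factor and that $\gamma\circ(\text{corresponding }\beta\text{-piece})$ reassembles into $1\otimes\cdots\otimes(\gamma\circ\beta)\sim 1\otimes\cdots\otimes 1_{\cE'}$, I obtain $str(\widetilde{\varphi^n_{\cE'}})=ch(\cE')$. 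One small bookkeeping point is that the map $\widetilde{\varphi^n}$ lands in $(\Omega^\bullet,df,n)\otimes_{mf}\cE$ rather than in an endomorphism ring, so to speak of $str$ I must interpret $At^i$ as an element of $\End_Q(E)\otimes_Q\Omega^i$ as in Subsection 5.1, and verify that tensoring by $\beta$ and $\gamma$ in the $\cE$-factor interacts correctly with this identification.

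**Main obstacle.** The genuinely delicate step is the last one: legitimately combining cyclic invariance with the homotopy $\gamma\circ\beta\sim 1_{\cE'}$ while keeping track of which tensor slots the maps act on. The supertrace is taken over the underlying module of the whole matrix factorization $(\Omega^\bullet,df,n)\otimes_{mf}\cE$, and $\beta,\gamma$ act only on the $\cE$ (resp.\ $\cE'$) slot while the $\Omega$-coefficients are carried along. I must be careful that cyclic permutation is applied to a composite of strict morphisms of the \emph{same} matrix factorization, so that Proposition 5.2(2) literally applies, and that after regrouping, the homotopy $\gamma\circ\beta\sim 1_{\cE'}$ can again be fed into Proposition 5.2(1). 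Once this indexing is handled honestly, the argument is a formal three-line manipulation; the work is entirely in verifying that $\beta$ and $\gamma$, tensored into the $\cE$-slot, are themselves strict morphisms compatible with the $\widetilde{\varphi^n}$ construction, which follows from functoriality of $-\otimes_{mf}-$ on the homotopy category.
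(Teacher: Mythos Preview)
Your proposal is correct and follows essentially the same route as the paper: insert $\beta\circ\gamma\sim 1_{\cE}$, invoke naturality to swap $\widetilde{\varphi^n_{\cE}}\circ\beta$ for $(1\otimes\cdots\otimes\beta)\circ\widetilde{\varphi^n_{\cE'}}$, then use cyclic invariance and $\gamma\circ\beta\sim 1_{\cE'}$ to finish. The only slip is a numbering one---the naturality relation you need is Corollary~5.9 (which in turn rests on Proposition~5.8), not Corollary~5.10, which is the very statement you are proving.
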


\begin{proof}

Say we have $\cE\darrow{\alpha}{\b} \cE' $, such that $\alpha\circ\beta\sim 1_{\cE'}$ and $\beta\circ\alpha\sim 1_{\cE}$, by Corollary 5.9 $$\widetilde{\varphi_{\cE}^n}\circ\beta\circ\a\sim(1\otimes 1\otimes\cdots\otimes 1\otimes\beta)\circ\widetilde{\varphi_{\cE'}^{n}}\circ\a$$

Therefore, by Proposition 5.2

\hp{2.5} $str(\widetilde{\varphi_{\cE}^n})$

\vp{0.1}

\hp{2.4} =$str(\widetilde{\varphi_{\cE}^n}\circ\beta\circ\alpha)$

\vp{0.1}

\hp{2.4} =$str((1^{\otimes n}\otimes\beta)\circ\widetilde{\varphi_{\cE'}^n}\circ\alpha)$

\vp{0.1}

\hp{2.4} =$str(\alpha\circ(1^{\otimes n}\otimes\beta)\circ\widetilde{\varphi_{\cE'}^n})$ 

\vp{0.1}

\hp{2.4} =$str(\widetilde{\varphi_{\cE'}^n})$

\vp{0.1}

This gives $$ch(\cE)=ch(\cE').$$ 

\end{proof}

\begin{thm}

Given any distinguished triangle $\cP \xrightarrow{\theta} \cQ \rightarrow cone(\theta) \rightarrow \cP[1]$ in $[MF(Q,f)]$, we have $$ch(\cQ)=ch(\cP)+ch(cone(\theta)) \hp{1}  (*)$$

\end{thm}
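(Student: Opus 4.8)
The plan is to reduce the additivity statement $(*)$ to the behavior of the supertrace on a direct sum, exploiting that the underlying $\mathbb{Z}/2$-graded module of $\cone(\theta)$ is literally $\cQ \oplus \cP[1]$ as a module (it is only the differential that mixes the two factors). Since $ch$ is defined as $str(\widetilde{\varphi^n})$ and, by Corollary 5.11, is a homotopy invariant, I am free to compute $ch(\cone(\theta))$ using any convenient choice of connection on the underlying module of $\cone(\theta)$. The natural choice is the block-diagonal connection $\nabla^{\cone} = \nabla^{\cQ} \oplus \nabla^{\cP[1]}$ coming from chosen connections on $\cQ$ and $\cP$ via Proposition 3.5(1). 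With this choice the supertrace of each $At^{2i}$ splits, and the goal becomes showing that the off-diagonal contributions (those sensitive to the map $\theta_0, \theta_1$ appearing in the differential of the cone) do not contribute to the supertrace.

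First I would write out $At_{\cone(\theta)}$ explicitly in block form with respect to the decomposition of the underlying module as $\cQ \oplus \cP[1]$, using the block-diagonal connection. The differential $d^{\cone}$ has the upper-triangular shape
$$
\begin{bmatrix} d^{\cQ} & \theta \\ 0 & -d^{\cP} \end{bmatrix},
$$
so $At_{\cone} = \nabla^{\cone}\circ d^{\cone} - (1\otimes d^{\cone})\circ \nabla^{\cone}$ is again block upper-triangular, with diagonal blocks $At_{\cQ}$ and $At_{\cP[1]}$ and an off-diagonal block involving $\nabla\theta - (1\otimes\theta)\nabla$. The key structural point is that an upper-triangular matrix raised to the power $2i$ is again upper-triangular, with diagonal blocks equal to the $2i$-th powers $At_{\cQ}^{2i}$ and $At_{\cP[1]}^{2i}$ of the diagonal entries. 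Since $str$ and $tr$ only read the diagonal (and the off-diagonal block lands in the wrong slot to affect the trace), I get $str(At_{\cone}^{2i}) = str(At_{\cQ}^{2i}) + str(At_{\cP[1]}^{2i})$. Summing against $\tfrac{1}{(2i)!}$ then yields $ch(\cone(\theta)) = ch(\cQ) + ch(\cP[1])$.

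The remaining step is to identify $ch(\cP[1])$ with $ch(\cP)$, which I expect to follow from a direct inspection of the shift: passing from $\cP$ to $\cP[1]$ swaps the roles of the even and odd parts of the underlying module and negates the differential. Negating $d$ leaves $At$ unchanged (since both terms defining $At$ are linear in $d$, the signs cancel), while the swap of even and odd parts negates the supertrace because $str = tr_0 - tr_1$. Hence $str(At_{\cP[1]}^{2i}) = -str(At_{\cP}^{2i})$ would give $ch(\cP[1]) = -ch(\cP)$ --- and here I must be careful, because the additive formula $(*)$ demands $ch(\cone(\theta)) = ch(\cQ) + ch(\cone(\theta))$, i.e., reconciling the shift sign with the claimed statement. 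I therefore expect the main obstacle to be the careful bookkeeping of signs in the shift and in the cone differential: one must verify that the sign introduced by $\cP[1]$ is consistent with reading the canonical triangle $\cQ \to \cone(\theta) \to \cP[1]$, so that the net result is genuinely additive as stated. Once the block-triangular argument is in place, this is purely a matter of tracking the $\mathbb{Z}/2$-grading conventions, and the computation in Example 4.7 (where $At^i$ is explicitly block-diagonal) provides a concrete check that the signs behave as required.
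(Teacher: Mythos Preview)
Your approach is essentially the paper's: choose the block-diagonal connection on the cone, observe that $At_{\cone(\theta)}$ is block upper-triangular with diagonal blocks $At_{\cQ}$ and $At_{\cP[1]}$, take even powers, and read off the supertrace. The paper simply unpacks this into an explicit $4\times 4$ block computation rather than your $2\times 2$ packaging, but the content is the same.

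One correction to your reasoning: negating $d$ does \emph{not} leave $At$ unchanged. Both terms in $At=\nabla d-(1\otimes d)\nabla$ are linear in $d$, so replacing $d$ by $-d$ gives $-At$, not $At$; the signs do not ``cancel,'' they reinforce. This slip is harmless because only even powers $At^{2i}$ appear in the Chern character, so $(-At)^{2i}=At^{2i}$. Your subsequent step---that swapping the even and odd summands negates $str$---is the one that matters, and it is correct: it yields $str(At_{\cP[1]}^{2i})=-str(At_{\cP}^{2i})$, hence $ch(\cP[1])=-ch(\cP)$. Plugging this into $ch(\cone(\theta))=ch(\cQ)+ch(\cP[1])$ gives $ch(\cQ)=ch(\cP)+ch(\cone(\theta))$, which is exactly $(*)$; there is no residual sign ambiguity to resolve once you keep straight that the cone's degree-$0$ part is $Q_0\oplus P_1=(\cQ)_0\oplus(\cP[1])_0$.
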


\begin{proof}

We will prove this theorem by explicit calculation of the Chern character.

First, it's enough to check equality for the even components, as discussed in the proof of Proposition 5.5. By definition $ch(\cE)=str(\widetilde{\varphi^n_{\cE}})=str(\displaystyle\sum\limits_{i=0}^n\frac{1}{i!}At_{\cE}^i)=\displaystyle\sum\limits_{i=0}^n\frac{1}{i!}str(At_{\cE}^i)$ for any matrix factorization $\cE$, so it's enough to prove$str(At_{\cQ}^{2i})=str(At_{\cP}^{2i})+str(At_{cone(\theta)}^{2i})$, for all even integers $2i$ between $1$ and $n$.

Say $\cP=(P_1\xrightarrow{A} P_0\xrightarrow{B} P_1)$ and $\cQ=(Q_1\xrightarrow{C} Q_0\xrightarrow{D} Q_1)$, the mapping cone is $$cone(\theta)=(\xymatrixcolsep{5pc}\xymatrix{Q_1\oplus P_0 \ar@<1ex>[r]^{ \begin{bmatrix}
    C & \theta_0 \\ 0 & -B
\end{bmatrix}} & Q_0\oplus P_1 \ar@<1ex>[l]^{\begin{bmatrix}
   D & \theta_1 \\ 0 & -A
\end{bmatrix}} })$$

Choose any connections $\nabla_0^{\cP}$ and $\nabla_1^{\cP}$ for $\cP$, similarly $\nabla_0^{\cQ}$ and $\nabla_1^{\cQ}$ for $\cQ$. We have induced connections for $cone(\theta)$:

$$\nabla_1^{cone(\theta)} = \begin{bmatrix}
   \nabla_0^{\cQ} &  \\  & \nabla_1^{\cP}
\end{bmatrix}  \hp{0.7}
 \nabla_0^{cone(\theta)} = \begin{bmatrix}
    \nabla_1^{\cQ} &  \\  & \nabla_0^{\cP}
\end{bmatrix}$$

\vp{0.2}

Since the Chern character is independent of choice of connections, we use these to compute the Atiyah class $At_{cone(\theta)}$ for $cone(\theta)$, which is just

$$At_{cone(\theta)}=\begin{bmatrix} \nabla_1^{\cQ} & & & \\  & \nabla_0^{\cP} & & \\ & & \nabla_0^{\cQ} &  \\ & & & \nabla_1^{\cP} \end{bmatrix}\cdot \begin{bmatrix}  & & C & \theta_0 \\  &  & & -B \\ D & \theta_1 & &  \\ & -A & & \end{bmatrix}-\begin{bmatrix}  & & C & \theta_0 \\  &  & & -B \\ D & \theta_1 & &  \\ & -A & & \end{bmatrix}\cdot \begin{bmatrix} \nabla_1^{\cQ} & & & \\  & \nabla_0^{\cP} & & \\ & & \nabla_0^{\cQ} &  \\ & & & \nabla_1^{\cP} \end{bmatrix}$$

$$=\begin{bmatrix}  & & X & * \\  &  & & Z \\ Y & * & &  \\ & W & & \end{bmatrix}$$

where  $$X=(\nabla_1^{\cQ} C-C \nabla_0^{\cQ})$$ $$Y=(\nabla_0^{\cQ} D-D\nabla_1^{\cQ})$$ $$Z=(B \nabla_1^{\cP} - \nabla_0^{\cP} B)$$ $$W=(A \nabla_0^{\cP} - \nabla_1^{\cP} A).$$

Hence $$At_{cone(\theta)}^2=\begin{bmatrix} XY & * & & \\  &  ZW & & \\  & & YX & * \\  & & & WZ \end{bmatrix}$$

Therefore, $$At_{cone(\theta)}^{2i}=\begin{bmatrix} (XY)^i & * & & \\  &  (ZW)^i & & \\  & & (YX)^i & * \\  & & & (WZ)^i \end{bmatrix}$$ for any even integer $2i$ between $1$ and $n$.

This gives that

\vp{0.2}

\hp{2.1} $str(At_{cone(\theta)}^{2i})$

\vp{0.2}

\hp{2} $=tr\begin{bmatrix} (XY)^i & *  \\  &
 (ZW)^i
\end{bmatrix}-tr\begin{bmatrix} (YX)^i & * \\   &
 (WZ)^i\end{bmatrix}$

\vp{0.2}

\hp{2.05}$=2tr((XY)^i) - 2tr((WZ)^i).$

\vp{0.2}

hence

$$str(At_{\cP}^{2i})+str(At_{cone(\theta)}^{2i})= 2tr((WZ)^i)+2tr((XY)^i)-2tr((WZ)^i)$$

\hp{2.6}$=2tr((XY)^i)=str(At_{\cQ}^{2i}).$ \end{proof}

\subsection{Grothendieck group}\hp{1}

\vp{0.1}

Recall that the Grothendieck group $K_0(T)$ of a triangulated category $T$ is the free abelian group generated by isomorphism classes for objects of $T$, modulo the relations $[X]+[Z]=[Y]$ for distinguished triangles $X\rightarrow Y\rightarrow Z\rightarrow X[1]$.

\begin{cor}

The Chern character induces a map from $K_0([MF(Q,f)])$ to $H_0((\Omega^{\cdot},df,n)_{\mathbb{Z}/2})$.

\end{cor}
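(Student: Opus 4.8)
The plan is to use the universal property of the Grothendieck group. By construction $ch(\cE)$ is an element of $H_0((\Omega^{\bullet},df,n)_{\mathbb{Z}/2})$ for every $\cE \in MF(Q,f)$, and the homotopy invariance established in the preceding corollary shows that $ch(\cE)$ depends only on the isomorphism class of $\cE$ in $[MF(Q,f)]$. First I would send each generator $[\cE]$ of the free abelian group on isomorphism classes of objects of $[MF(Q,f)]$ to $ch(\cE)$; since $ch$ is constant on isomorphism classes this defines a homomorphism of abelian groups from that free group to $H_0((\Omega^{\bullet},df,n)_{\mathbb{Z}/2})$. It then remains only to check that this homomorphism annihilates the defining relations $[X]+[Z]-[Y]$ indexed by distinguished triangles $X \to Y \to Z \to X[1]$.

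Next I would reduce an arbitrary such relation to the additivity theorem proved above. By the description of the triangulated structure on $[MF(Q,f)]$, every distinguished triangle $X \to Y \to Z \to X[1]$ is isomorphic, as a triangle, to one of the form $\cP \xrightarrow{\theta} \cQ \to \cone(\theta) \to \cP[1]$ for some strict morphism $\theta$. An isomorphism of triangles consists in particular of isomorphisms $X \cong \cP$, $Y \cong \cQ$, and $Z \cong \cone(\theta)$ in $[MF(Q,f)]$. Applying homotopy invariance to each of these three isomorphisms gives $ch(X)=ch(\cP)$, $ch(Y)=ch(\cQ)$, and $ch(Z)=ch(\cone(\theta))$. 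The additivity theorem then yields $ch(\cQ)=ch(\cP)+ch(\cone(\theta))$, and substituting the three equalities gives $ch(Y)=ch(X)+ch(Z)$; equivalently, the relation $[X]+[Z]-[Y]$ is sent to $0$. Hence the homomorphism descends to $K_0([MF(Q,f)])$.

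The genuine content of this argument has already been carried out: the hard computational step is the additivity theorem for cones, and the only remaining subtlety is the reduction from a general distinguished triangle to a cone triangle. The main point to be careful about is that this reduction is legitimate precisely because $ch$ is a homotopy invariant, so that passing across an isomorphism of triangles does not change any of the three Chern characters. Once this is granted, the statement is a formal consequence of the universal property of $K_0$, and no further estimates or explicit computations are needed.
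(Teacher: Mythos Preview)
Your proposal is correct and follows essentially the same approach as the paper: reduce an arbitrary distinguished triangle to a cone triangle using the description of the triangulated structure, then apply homotopy invariance (Corollary~5.10) and the additivity theorem for cones (Theorem~5.11). The paper's proof is a one-line sketch of exactly this argument, and you have simply unpacked it via the universal property of $K_0$.
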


\begin{proof}

Any distinguished triangle is isomorphic (in the homotopy category) to a triangle of the form of Theorem 5.11. Now apply Corollary 5.10 and Theorem 5.11 \end{proof}

Now we will prove that the Chern character is a ring homomorphism.

\begin{lem} $\bigoplus\limits_{f\in Q}K_0([MF(Q,f)])$ is a ring via $[\cE]_f\cdot[\cF]_g:=[\cE\otimes_{mf} \cF]_{f+g}$.

\end{lem}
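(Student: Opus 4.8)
The plan is to verify the ring axioms for the graded object $\bigoplus_{f \in Q} K_0([MF(Q,f)])$ with the stated multiplication $[\cE]_f \cdot [\cF]_g := [\cE \otimes_{mf} \cF]_{f+g}$. The multiplication is bilinear and respects the grading because, by Definition 2.9, the tensor product $\cE \otimes_{mf} \cF$ of a matrix factorization of $f$ and one of $g$ is a matrix factorization of $f+g$; thus the product of a class in the $f$-summand and a class in the $g$-summand lands in the $(f+g)$-summand, which is exactly the required compatibility with the $Q$-grading of the ring. The first thing I would do is confirm that $\otimes_{mf}$ descends to $K_0$, i.e.\ that it is well-defined on Grothendieck groups. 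Since the tensor product is well-defined on the homotopy category $[MF(Q,f)]$ (stated just after Definition 2.9), it sends isomorphism classes to isomorphism classes; to see it respects the defining relations of $K_0$ I would check that tensoring a distinguished triangle $\cP \to \cQ \to \cone(\theta) \to \cP[1]$ in $[MF(Q,f)]$ with a fixed $\cL \in MF(Q,g)$ yields a distinguished triangle in $[MF(Q,f+g)]$. This reduces to showing that $\otimes_{mf} \cL$ is an exact functor of triangulated categories, which follows from the natural isomorphism $\cone(\theta) \otimes_{mf} \cL \cong \cone(\theta \otimes 1_{\cL})$ together with the additivity of $\otimes_{mf}$ over direct sums (Proposition 2.10).

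Next I would establish the ring axioms on representatives and then pass to $K_0$. Bilinearity of the product follows directly from Proposition 2.10, which gives $(\cM \oplus \cN) \otimes_{mf} \cL \cong (\cM \otimes_{mf} \cL) \oplus (\cN \otimes_{mf} \cL)$, so that $([\cM]_f + [\cN]_f) \cdot [\cL]_g = [\cM]_f \cdot [\cL]_g + [\cN]_f \cdot [\cL]_g$; the symmetric distributive law comes from the analogous splitting in the second factor. For associativity I would exhibit a natural homotopy-coherent isomorphism $(\cE \otimes_{mf} \cF) \otimes_{mf} \cG \cong \cE \otimes_{mf} (\cF \otimes_{mf} \cG)$ in $[MF(Q, f+g+h)]$, which on underlying $\mathbb{Z}/2$-graded modules is the usual associativity of $\otimes_Q$ and which one checks commutes with the differentials via the graded Leibniz rule in Definition 2.9. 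The unit is the class $[\,(Q \darrow{0}{1} Q)\,]_0 \in K_0([MF(Q,0)])$, the matrix factorization of $0$ which tensors to the identity; I would verify $\cE \otimes_{mf} (Q \darrow{0}{1} Q) \cong \cE$ in the homotopy category by a direct computation of the folded tensor product.

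The main obstacle I expect is the well-definedness on $K_0$, specifically confirming that $-\otimes_{mf} \cL$ is genuinely triangulated rather than merely additive: one must produce a compatible natural isomorphism $\cone(\theta \otimes 1_{\cL}) \cong \cone(\theta) \otimes_{mf} \cL$ and check it is compatible with the connecting maps $\cN \to \cone(\theta)$ and $\cone(\theta) \to \cM[1]$ described after Definition 2.8, so that distinguished triangles map to distinguished triangles. This is the step where the signs in the shift functor (Definition 2.7) and in the differential $d_{M \otimes N}(m \otimes n) = d_M(m) \otimes n + (-1)^{|m|} m \otimes d_N(n)$ interact, and it requires care to match conventions; the remaining axioms are formal consequences of properties of $\otimes_Q$ once this is in place. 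A clean alternative, which I would mention, is to invoke the fact that $\otimes_{mf}$ is the standard symmetric monoidal structure on matrix factorizations (see \cite{yoshino1998tensor}), whose exactness in each variable is known, thereby shortcutting the explicit triangle-chasing.
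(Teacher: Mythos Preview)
Your overall strategy matches the paper's: verify the ring axioms on representatives using Proposition~2.10 for distributivity and the standard associativity/commutativity of $\otimes_{mf}$, and then observe that well-definedness on $K_0$ reduces to $-\otimes_{mf}\cL$ being a triangulated functor. The paper simply cites this last fact (tensor product is triangulated, \cite{yu2013geometric}) rather than spelling out the cone comparison $\cone(\theta)\otimes_{mf}\cL\cong\cone(\theta\otimes 1_\cL)$ as you propose, but the content is the same.

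There is, however, a genuine error in your choice of unit. The object $(Q \darrow{0}{1} Q)$ is contractible in $[MF(Q,0)]$: taking $h_1=1_Q$ gives $d^{\cN}h+hd^{\cN}=1$, so this matrix factorization is isomorphic to zero in the homotopy category and its class in $K_0$ is $0$, not $1$. Tensoring any $\cE$ with it produces a contractible object rather than $\cE$. The correct unit, which the paper uses, is $\mathbf{1}=(0\darrow{0}{0} Q)$, i.e.\ the module $Q$ concentrated in degree zero; one then has $\mathbf{1}\otimes_{mf}\cE\cong\cE$ already at the level of $MF(Q,f)$, not merely up to homotopy. With this correction your argument goes through.
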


\begin{proof}

First we have to show that the above multiplication is well-defined.

Since we know from the definition that $-\otimes_{mf}-$ preserves homotopy equivalences of matrix factorizations. For any given $\cE\simeq \cE'$ and $\cF\simeq \cF'$, we have $\cE\otimes_{mf} \cF\simeq \cE'\otimes_{mf} \cF'$, so the tensor product is well-defined on the free abelian group generated by isomorphism classes of matrix factorizations; we denote this group by $\bigoplus\limits_{f\in Q} \mathbb{Z}([MF(Q,f)])$.

\vp{0.1}

Now, let's show that $\bigoplus\limits_{f\in Q} \mathbb{Z}([MF(Q,f)])$ is a commutative ring under the above multiplication.

\begin{enumerate}

\item $(\cE)_f\cdot(\cF)_g\in \bigoplus\limits_{f\in Q} \mathbb{Z}([MF(Q,f)])$

\item $\cE\otimes_{mf} \cF\cong \cF\otimes_{mf}\cE$ hence $(\cE)_f\cdot(\cF)_g=(\cF)_g\cdot (\cE)_f$

\item $((\cE)_f\cdot(\cF)_g)\cdot(\cG)_h=(\cE\otimes_{mf} \cF)_{f+g}\cdot(\cG)_h=((\cE\otimes_{mf} \cF)\otimes_{mf} \cG)_{f+g+h}$. 

Also, $(\cE)_f\cdot((\cF)_g\cdot(\cG)_h)=(\cE)_f\cdot((\cF\otimes_{mf} \cG)_{g+h})=(\cE\otimes_{mf}(\cF\otimes_{mf} \cG))_{f+g+h}$, where $(\cE)_f$ means the isomorphism class of the matrix factorization $\cE\in MF(Q,f)$. Hence the above shows that the multiplication is associative.

\vp{0.1}

\item There is an identity  $\textbf{1}=(0 \darrow{0}{0} Q)\in MF(Q,0)$ such that $ (\textbf{1})_0\cdot(\cE)_f=(\cE)_f$.

\vp{0.1}

Indeed, $\textbf{1}\otimes_{mf}\cE$ equals to $$0\oplus E_1 \darrow{\begin{bmatrix}
    & 1\otimes e_1 \\ -1\otimes e_0 &
\end{bmatrix}}{\begin{bmatrix}
      & -1\otimes e_1 \\ 1\otimes e_0 &
\end{bmatrix}}
 0\oplus E_0.$$

Therefore we have an isomorphism of $\textbf{1}\otimes_{mf} \cE$ and $\cE$, i.e., $ (\textbf{1})_0\cdot(\cE)_f=(\cE)_f$.

Similarly, we also have $(\cE)_f\cdot(\textbf{1})_0=(\cE)_f$.

\item The last thing to check is the distribution law. In particular, I will show that $$((\cE)_f+(\cF)_f)\cdot (\cG)_g=(\cE)_f\cdot (\cG)_g+(\cF)_f\cdot (\cG)_f$$ for any $\cE,\cF\in MF(Q,f)$ and $\cG\in MF(Q,g)$.

In fact, we have

\hp{1.3}$((\cE)_f+(\cF)_f)\cdot (\cG)_g=(\cE\oplus \cF)_f\cdot (\cG)_g$

\hp{2.28}$=((\cE\oplus \cF)\otimes_{mf} \cG)_{f+g}$

\hp{2.28}$=((\cE\otimes_{mf} G)\oplus(\cF\otimes_{mf} \cG))_{f+g}$

\hp{2.28}$=(\cE\otimes_{mf} \cG)_{f+g}+(\cF\otimes_{mf} \cG)_{f+g}$

\hp{2.28}$=(\cE)_f\cdot(\cG)_g+(\cF)_f\cdot(\cG)_g$

\vp{0.1}

Similarly, we can prove that $(\cG)_g\cdot((\cE)_f+(\cF)_f)=(\cG)_g\cdot (\cE)_f + (\cG)_g\cdot (\cF)_f$.

The above shows that the isomorphism classes of all matrix factorizations is a monoid under $-\otimes_{mf}-$, so $\bigoplus\limits_{f\in Q} \mathbb{Z}([MF(Q,f)])$ is in fact a commutative ring. 

Finally, to show that this multiplication is well-defined on the quotient group, it's enough to prove that the subgroup $$\{[\cQ]-[\cP]-[\cW]: \cP\rightarrow \cQ\rightarrow \cW\rightarrow \cP[1] \text{ a distinguished triangle}\}$$ is an ideal of $\bigoplus\limits_{f\in Q} \mathbb{Z}([MF(Q,f)])$. This amounts to the following fact: tensor product is a triangulated functor \cite{yu2013geometric}.

\end{enumerate}

\end{proof}

\begin{defn}: Define $K_f(Q):=\bigoplus\limits_{i\in \mathbb{Z}_{\geqslant 0}}K_0([MF(Q,if)])$; this is in fact a  subring of $\bigoplus\limits_{f\in Q}K_0([MF(Q,if)])$.

\end{defn}

\begin{proof}

Given any $[a],[b]\in K_f(Q)$, $[a]+[b]\in K_f(Q)$; $[a]\cdot [b]\in K_f(Q)$; $[\textbf{1}]\in K_f(Q)$; $[-a] \in K_f(Q)$ (since $[-a]=[a[1]]\in K_f(Q).$) \end{proof}

\begin{lem}

$\bigoplus\limits_{f\in Q}H_0((\Omega^{\cdot},df,n)_{\mathbb{Z}/2})$ is a commutative ring via  $\wedge$.

\end{lem}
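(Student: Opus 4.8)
The plan is to regard $R:=\bigoplus_{f\in Q}H_0((\Omega^{\bullet},df,n)_{\mathbb{Z}/2})$ as a ring graded by the additive monoid $(Q,+)$, where the piece indexed by $f$ consists of classes of \emph{even} forms $\omega$ (i.e.\ $\omega\in\bigoplus_i\Omega^{2i}$) satisfying $df\wedge\omega=0$, taken modulo the forms $df\wedge\alpha$ with $\alpha$ odd. On such representatives I would define the product by the exterior product, $[\omega]_f\cdot[\eta]_g:=[\omega\wedge\eta]_{f+g}$, and extend biadditively. Since the wedge of two even forms is even, the product lands in the degree-$0$ part of the folding of $(\Omega^{\bullet},d(f+g),n)$, so the multiplication respects the $Q$-grading. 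This is exactly parallel to the grading on $\bigoplus_f K_0([MF(Q,f)])$ from Lemma 5.14, and is the structure needed for $ch$ to be multiplicative.

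The substance of the proof is the well-definedness of this product, which I would split into two checks, both reducing to sign bookkeeping that the restriction to even forms renders harmless. First, the wedge of two cycles is a cycle: for $\omega$ of even degree with $df\wedge\omega=0$ and $\eta$ of even degree with $dg\wedge\eta=0$,
\begin{align*}
d(f+g)\wedge(\omega\wedge\eta) &= df\wedge\omega\wedge\eta+dg\wedge\omega\wedge\eta\\
&=(df\wedge\omega)\wedge\eta+\omega\wedge(dg\wedge\eta)=0,
\end{align*}
using $dg\wedge\omega=\omega\wedge dg$ since $\omega$ is even. Second, the class of the product does not depend on representatives: if $\omega$ is replaced by $\omega+df\wedge\alpha$ with $\alpha$ odd, then
\[
(df\wedge\alpha)\wedge\eta=d(f+g)\wedge(\alpha\wedge\eta),
\]
because $dg\wedge\alpha\wedge\eta=-\alpha\wedge(dg\wedge\eta)=0$ (the sign being $(-1)^{|\alpha|}=-1$). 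Thus altering $\omega$ by a boundary alters $\omega\wedge\eta$ only by a boundary in the $(f+g)$-complex; the symmetric computation, moving $df$ past the even $\omega$, handles a change of $\eta$ by a boundary.

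Granting well-definedness, the ring axioms are inherited from the exterior algebra. Associativity and both distributive laws follow at once from the associativity and bilinearity of $\wedge$ and additivity in each $H_0$ summand. The identity is the class $[1]_0$ of $1\in\Omega^0=Q$ in the $f=0$ piece, whose differential $d0=0$ makes every even form a cycle; clearly $[1]_0\cdot[\eta]_g=[\eta]_g$. Commutativity holds because the representatives are even: $\omega\wedge\eta=(-1)^{|\omega||\eta|}\eta\wedge\omega=\eta\wedge\omega$. I do not anticipate any genuine obstacle; the only delicate point is the sign accounting in the well-definedness step, and that is precisely what is neutralized by working in the degree-$0$ (even) part of the $\mathbb{Z}/2$-folding.
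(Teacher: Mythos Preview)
Your proposal is correct and follows essentially the same approach as the paper: define multiplication by the wedge product of even representatives and derive the ring axioms from the exterior algebra, with commutativity coming from the evenness of the degrees. Your treatment is in fact more thorough than the paper's, which merely asserts that well-definedness and the axioms hold ``by properties of the wedge product,'' whereas you carry out the cycle and boundary checks explicitly.
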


\begin{proof}

First, assume $n$ is an odd integer. A similar proof works when $n$ is even. Recall that $(\Omega^{\cdot},df,n)$ is the following complex: $$\xymatrixcolsep{3pc}\xymatrix{ Q \ar[r]^{df\wedge} & \Omega^1 \ar[r]^{df\wedge} & \Omega^2 \ar[r]^{df\wedge} & \cdots \ar[r]^{df\wedge} & \Omega^n }.$$ Therefore the $\mathbb{Z}/2$-folding $(\Omega^{\cdot},df,n)_{\mathbb{Z}/2}$ is the matrix factorization $\cE=(E_1\darrow{D_1}{D_0} E_0)$ where $$E_1=\Omega^1\oplus\Omega^3\oplus\cdots\oplus\Omega^n, \hp{0.5} E_0=Q\oplus\Omega^2\oplus\cdots\oplus\Omega^{n-1}$$ $$\text{and} \hp{0.5} D_1=\begin{bmatrix} 0 & 0 & \cdots & 0 \\ df\wedge & 0 & \cdots & 0 \\ 0 & df\wedge & \cdots & 0 \\ & \cdots & \cdots & \\ 0 & 0 & \cdots & df\wedge \end{bmatrix}_{\frac{n+1}{2}\times\frac{n+1}{2}} \hp{0.5} D_0=\begin{bmatrix} df\wedge & 0 & \cdots & 0 \\ 0 & df\wedge & \cdots & 0 \\ & \cdots & \cdots & \\ 0 & 0 & \cdots & df\wedge \end{bmatrix}_{\frac{n+1}{2}\times\frac{n+1}{2}}$$

Hence $H_0((\Omega^{\cdot},df,n)_{\mathbb{Z}/2})=\dfrac{ker D_0}{Im D_1}$ is a ring by properties of the wedge product. For example, for any $a,b,c\in\bigoplus\limits_{i\text{ even}}\Omega^i$, we have $$(a\wedge b)\wedge c=a\wedge (b\wedge c).$$ It is not hard to see that elements on the two sides of the above equation determine the same element in homology. The same holds for other conditions to make a set into a ring. It is commutative since we are dealing only with even former (in general $a\wedge b=(-1)^{ij} b\wedge a$ for $a\in\Omega^i$ and $b\in\Omega^j$; therefore, $i,j$ even means $a\wedge b=b\wedge a$). From this we see that $\bigoplus\limits_{f\in Q}H_0((\Omega^{\cdot},df,n)_{\mathbb{Z}/2})$ is a commutative ring.

It is clear that $\bigoplus\limits_{i\in\mathbb{Z}_{\geqslant 0}}H_0((\Omega^{\cdot},idf,n)_{\mathbb{Z}/2})$ is a subring of $\bigoplus\limits_{f\in Q}H_0((\Omega^{\cdot},df,n)_{\mathbb{Z}/2})$. \end{proof}

\begin{thm}

Given two matrix factorizations $\cE\in [MF(Q,f)]$ and $\cF\in [MF(Q,g)]$, we have the following commutative diagram 

$$\xymatrixcolsep{10pc}\xymatrix{ \cE\otimes_{mf} \cF \ar[rd]_{\widetilde{\varphi_{\cE\otimes_{mf}\cF}^n}}
\ar[r]^{\widetilde{\varphi_{\cE}^n}\otimes\widetilde{\varphi_{\cF}^n}} &
((\Omega^{\bullet},df,n)\otimes_{mf} \cE)\otimes_{mf}((\Omega^{\bullet},dg,n)\otimes_{mf}
\cF) \ar[d]^{\wedge\circ(1\otimes\tau\otimes 1}\\
& (\Omega^{\bullet},df+dg,n)\otimes_{mf}(\cE\otimes_{mf} \cF)}$$ where $\tau: \cE\otimes_{mf}(\Omega^{\bullet},df,n)\rightarrow(\Omega^{\bullet},df,n)\otimes_{mf} \cE$ is the isomorphism $\tau(a\otimes b)=(-1)^{|a||b|}b\otimes a$.

\end{thm}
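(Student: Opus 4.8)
The plan is to reduce everything to the explicit expansion $\widetilde{\varphi^n_{\cE}}=\sum_{i=0}^n\frac1{i!}At^i_{\cE}$ from the end of Section 4, combined with a Leibniz rule for the Atiyah class under $\otimes_{mf}$, so that the commutativity of the square becomes the multiplicativity $\exp(a+b)=\exp(a)\exp(b)$ of the exponential series. Since $\widetilde{\varphi^n}$ is independent of the connection up to homotopy, I would fix the Levi--Civita connection on each of $\cE$ and $\cF$ and equip $\cE\otimes_{mf}\cF$ with the tensor-product connection of Proposition 3.5(2); working locally I may even take all connections to be the exterior derivative, so that, as in Example 4.7, the three Atiyah classes are concrete form-valued matrices. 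It then suffices to prove the identity on the nose for these compatible choices.

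The first and central step is the Leibniz rule
\[
At_{\cE\otimes_{mf}\cF}=At_{\cE}\otimes 1_{\cF}+1_{\cE}\otimes At_{\cF},
\]
where the summands are endomorphisms of $\cE\otimes_{mf}\cF$ valued in $\Omega^1$. This I would verify straight from the definition $At_{\nabla}=\nabla\circ d-(1\otimes d)\circ\nabla$, feeding in the tensor differential $d_{M\otimes N}(m\otimes n)=d_M(m)\otimes n+(-1)^{|m|}m\otimes d_N(n)$ of Definition 2.9 and the tensor connection of Proposition 3.5(2); because the connection is a graded derivation across the tensor product, the mixed terms cancel and only the two pure summands survive. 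The isomorphism $\tau$ appearing in the statement is exactly the swap of Proposition 3.5(2) used to bring the $\Omega^1$ produced by $At_{\cF}$ to the front.

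Next I would raise the Leibniz rule to the $i$-th power via the recursive composition $\widetilde{At^i}$ and post-compose with $\wedge$. Writing $a=At_{\cE}\otimes 1$ and $b=1\otimes At_{\cF}$, the point is that after wedging $a$ and $b$ super-commute: each contributes an odd $\Omega^1$-class acting on a complementary tensor factor, and the Koszul sign $(-1)^{|a||b|}$ carried by $\tau$ is precisely what converts the anticommutation of these odd forms into agreement. Granting this, the binomial theorem yields $\wedge\circ At^i_{\cE\otimes_{mf}\cF}=\sum_{j+k=i}\binom{i}{j}\,\wedge\circ(At^j_{\cE}\otimes At^k_{\cF})\circ(1\otimes\tau\otimes 1)$. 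Dividing by $i!$ and using $\frac1{i!}\binom{i}{j}=\frac1{j!\,k!}$, the resulting double sum factors as $\bigl(\sum_j\frac1{j!}At^j_{\cE}\bigr)\otimes\bigl(\sum_k\frac1{k!}At^k_{\cF}\bigr)$ post-composed with $\wedge\circ(1\otimes\tau\otimes 1)$, which is exactly $\wedge\circ(1\otimes\tau\otimes 1)\circ(\widetilde{\varphi^n_{\cE}}\otimes\widetilde{\varphi^n_{\cF}})$. The truncation at $n$ requires no extra argument: as $Q$ is smooth of relative dimension $n$ one has $\Omega^m_{Q/k}=0$ for $m>n$, so every monomial of form-degree above $n$ vanishes (this is also what legitimizes the factorization of the double sum), and the identity $d(f+g)=df+dg$ matches the three twisted de Rham complexes $(\Omega^{\bullet},df,n)$, $(\Omega^{\bullet},dg,n)$ and $(\Omega^{\bullet},df+dg,n)$.

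The step I expect to be the real obstacle is this sign bookkeeping in the super-commutation of $a$ and $b$: one must check that every transposition of an odd $\Omega^1$-factor past an odd-degree module element combines with the Koszul sign in $\tau$ so that the two cross-terms of $(a+b)^2$ add rather than cancel after wedging. I would pin this down first in the free model of Example 4.7, where $a$ and $b$ are honest matrices with $\Omega^1$-entries, verify the $i=2$ case by hand, and then observe that the higher powers follow by the identical sign analysis. Commutativity of the square on the nose for the chosen connections then gives commutativity up to homotopy in general.
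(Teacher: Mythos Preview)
Your proposal is correct and follows essentially the same route as the paper: derive the Leibniz rule $At_{\cE\otimes_{mf}\cF}=At_{\cE}\otimes 1+\tau(1\otimes At_{\cF})$ from the tensor differential and tensor connection, show that after wedging the two summands commute, expand $(a+b)^i$ by the binomial theorem, and then match the result against $\wedge\circ(1\otimes\tau\otimes1)\circ(\widetilde{\varphi^n_{\cE}}\otimes\widetilde{\varphi^n_{\cF}})$ term by term. The paper handles the sign bookkeeping you flag as the obstacle by evaluating both compositions directly on simple tensors $e\otimes f$ (tracking a residual sign $-(-1)^{|e|}$ that appears on \emph{both} sides when $k$ and $s$ are both odd) rather than by first passing to the free model of Example~4.7, but the argument is otherwise identical.
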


\begin{rem} The above diagram makes sense, since the relative dimension of $Q$ over $k$ is $n$. After changing the position, wedging things together, terms with degree higher than $n$ vanish. \end{rem}

\begin{proof} (of Theorem5.16) First, by Proposition 3.5, for the underlying modules $E$ and $F$, if we choose a connection $\nabla_E$ for $E$ and $\nabla_F$ for $F$, then there is a natural connection for the tensor product: $\nabla_E\otimes 1 + 1\otimes\nabla_F$. Also, the differential for the tensor product of two matrix factorizations is given by $d_{\cE\otimes_{mf} \cF}(e\otimes f)=d_{\cE}(e)\otimes f+(-1)^{|e|}e\otimes d_{\cF}(f)$, where $e\in E$ and $f\in F$.  After a careful calculation, we have that $$At_{\cE\otimes_{mf} \cF}(e\otimes f)=At_{\cE}(e)\otimes f+(-1)^{|e|} e\otimes At_{\cF}(f)  \hp{0.3} (*)$$ i.e., $At_{\cE\otimes_{mf} \cF}=At_{\cE}\otimes 1+\tau(1\otimes At_{\cF})$, where $\tau$ is the map in the statement of the theorem.

Another observation we want to make before looking into $\widetilde{\varphi_{\cE\otimes_{mf} \cF}^n}$ is that $$\wedge\circ(At_{\cE}\otimes 1)\circ\tau(1\otimes At_{\cF})=\wedge\circ\tau(1\otimes At_{\cF})\circ (At_{\cE}\otimes 1).$$ In fact, we have $$\wedge\circ(At_{\cE}\otimes 1)\circ\tau(1\otimes At_{\cF})(e\otimes f)=\wedge\circ(At_{\cE}\otimes 1)\circ\tau(1\otimes At_{\cF})((-1)^{|e|}\sigma (e\otimes At_{\cF}(f)))$$ where $\sigma$ is the same as $\tau$ but doesn't introduce a sign. Say for simplicity that $At_{\cF}(f)=u\otimes f'$ and $At_{\cE}(e)=w\otimes e'$ (these should really be sums of simple tensors, nonetheless, the idea is the same and the case for simple tensors is more clear), then the above is $$\wedge\circ(At_{\cE}\otimes 1)\circ((-1)^{|e|}u\otimes e\otimes f')$$ $$=(-1)^{|e|}\wedge(u\otimes At_{\cE}(e)\otimes f')$$ $$=(-1)^{|e|}\wedge(u\otimes w\otimes e'\otimes f')$$ $$=(-1)^{|e|}\cdot(u\wedge w\otimes e'\otimes f')$$ $$=-(-1)^{|e|}\cdot(w\wedge u\otimes e'\otimes f')$$ $$=-(-1)^{|e|}\wedge\sigma(At_{\cE}(e)\otimes At_{\cF}(f)).$$ For $\wedge\circ\tau(1\otimes At_{\cF})\circ (At_{\cE}\otimes 1)(e\otimes f)$, we have $$\wedge\circ\tau(1\otimes At_{\cF})\circ (At_{\cE}\otimes 1)(e\otimes f)$$ $$=\wedge\circ\tau(1\otimes At_{\cF})\circ (At_{\cE}(e)\otimes f)$$ $$=\wedge\circ\tau(1\otimes At_{\cF})\circ (w\otimes e' \otimes f)$$ $$=\wedge\circ\tau(w\otimes e' \otimes At_{\cF}(f))$$ $$=\wedge\circ\tau(w\otimes e' \otimes u\otimes f')$$ $$=(-1)^{|e'|}\wedge(w\otimes u \otimes e'\otimes f')$$ $$=(-1)^{|e|+1}(w\wedge u \otimes e'\otimes f')$$ $$=-(-1)^{|e|}\wedge\sigma(At_{}\cE(e)\otimes At_{\cF}(f)).$$

Therefore, the tow compositions of the operators $At_{\cE}\otimes 1$ and $\tau(1\otimes At_{\cF})$ are the same after $\wedge$ and more importantly we get $-(-1)^{|e|}\wedge\sigma(At_{\cE}(e)\otimes At_{\cF}(f))$ applying to the element $e\otimes f$. Then it is not hard to see that $$\wedge(At_{\cE}\otimes 1)^k\circ(\tau()1\otimes At_{\cF})^s=   \begin{dcases*}
        \wedge\sigma(At_{\cE}^k(e)\otimes At_{\cF}^s(f))  & if one of $k,s$ is even\\
        -(-1)^{|e|}\wedge\sigma(At_{\cE}^k(e)\otimes At_{\cF}^s(f)) & if both $k$ and $s$ are odd 
        \end{dcases*}$$

We can compute $\widetilde{\varphi_{\cE\otimes_{mf} \cF}^n}$ by formula $(*)$, the degree $i$th piece is (remember the notation $\sim$ indicates we have already applied $\wedge$ to the Atiyah class) $$\frac{1}{i!}At_{\cE\otimes_{mf}\cF}^i=\frac{1}{i!}(At_{\cE}\otimes 1+\tau(1\otimes At_{\cF}))^i$$ 

$$=\begin{dcases*}
        \frac{1}{i!}\displaystyle\sum_{k+s=i}\dbinom{i}{k}\wedge\sigma(At_{\cE}^k\otimes
At_{\cF}^s)  & if one of $k,s$ is even\\
        -(-1)^{|e|}\frac{1}{i!}\displaystyle\sum_{k+s=i}\dbinom{i}{k}\wedge\sigma(At_{\cE}^k\otimes
At_{\cF}^s) & if both $k$ and $s$ are odd 
        \end{dcases*}$$   

$$=\begin{dcases*}
        \displaystyle\sum_{k+s=i}\frac{1}{k!s!}\wedge\sigma(At_{\cE}^k\otimes
At_{\cF}^s)  & if one of $k,s$ is even\\
        -(-1)^{|e|}\displaystyle\sum_{k+s=i}\frac{1}{k!s!}\wedge\sigma(At_{\cE}^k\otimes
At_{\cF}^s) & if both $k$ and $s$ are odd 
        \end{dcases*}$$ 
       
Meanwhile, the $i$th component for $\widetilde{\varphi_{\cE}^n}\otimes\widetilde{\varphi_{\cF}^n}$ is $\displaystyle\sum_{k+s=i}\frac{1}{k!s!}(At_{\cE}^k\otimes At_{\cF}^s)$. Therefore, say $At_{\cE}^k(e)=w'\otimes\overline{e}$ and $At_{\cE}^s(f)=u'\otimes\overline{f}$ with $w'\in\Omega^k$ and $u'\in\Omega^s$ (hence $|\overline{e}|=|e|+1$ if $k$ is odd and $|\overline{e})|=|e|$ if $k$ is even), we have $$(\wedge\circ(1\otimes\tau\otimes 1))(\displaystyle\sum_{k+s=i}\frac{1}{k!s!}(At_{\cE}^k(e)\otimes At_{\cF}^s(f)))$$
$$=(-1)^{|\overline{e}|\cdot s}\cdot(\displaystyle\sum_{k+s=i}\frac{1}{k!s!}\wedge\sigma(At_{\cE}^k(e)\otimes At_{\cF}^s(f))$$ 

$$=\begin{dcases*}
        -(-1)^{|e|}\cdot\displaystyle\sum_{k+s=i}\frac{1}{k!s!}\wedge\sigma(At_{\cE}^k(e)\otimes At_{\cF}^s(f))  & if both $k,s$ are odd\\
        \displaystyle\sum_{k+s=i}\frac{1}{k!s!}\cdot\wedge\sigma(At_{\cE}^k(e)\otimes At_{\cF}^s(f)) & otherwise 
        \end{dcases*}$$ 

This completes the proof of the theorem. \end{proof}

\begin{cor}

The Chern character $ch: K_f(Q)\rightarrow \bigoplus\limits_{i\in  \mathbb{Z}_{\geqslant 0}}H_0((\Omega^{\cdot},idf,n)_{\mathbb{Z}/2})$ is a ring homomorphism, i.e., $$ch([\cE]\cdot[\cF])=ch([\cE])ch([\cF])$$

\end{cor}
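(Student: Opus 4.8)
The plan is to derive the corollary directly from Theorem 5.16 together with the multiplicativity of the supertrace under $\mathbb{Z}/2$-graded tensor products. Since $ch$ is already known to descend to the Grothendieck group (Corollary 5.12) and to be additive on direct sums, and since $K_f(Q)$ is generated as an abelian group by the classes of single matrix factorizations, it suffices to verify the multiplicative identity $ch(\cE\otimes_{mf}\cF)=ch(\cE)\wedge ch(\cF)$ on representatives $\cE\in MF(Q,if)$ and $\cF\in MF(Q,jf)$. The distributivity laws established in Lemma 5.13 then extend the identity bilinearly, and the ring structures on source and target (Lemmas 5.13 and 5.15) guarantee that bilinear extension is legitimate, so $ch$ becomes a ring homomorphism on all of $K_f(Q)$.

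First I would unwind both sides. By definition $ch(\cE\otimes_{mf}\cF)=str(\widetilde{\varphi^n_{\cE\otimes_{mf}\cF}})$. Theorem 5.16 identifies $\widetilde{\varphi^n_{\cE\otimes_{mf}\cF}}$ with the composite $(\wedge\circ(1\otimes\tau\otimes 1))\circ(\widetilde{\varphi^n_{\cE}}\otimes\widetilde{\varphi^n_{\cF}})$, so I would substitute this under the supertrace and invoke Proposition 5.2 (homotopy and cyclic invariance of $str$) to work with convenient representatives and to rearrange factors. The crux is the multiplicativity $str(\alpha\otimes\beta)=str(\alpha)\cdot str(\beta)$ for degree-zero endomorphisms of $\mathbb{Z}/2$-graded projective modules, where the left-hand side uses the graded tensor product. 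This I would prove by decomposing along the even/odd splitting: writing $\alpha=\alpha_0\oplus\alpha_1$ and $\beta=\beta_0\oplus\beta_1$, the even part of $\alpha\otimes\beta$ is $(\alpha_0\otimes\beta_0)\oplus(\alpha_1\otimes\beta_1)$ and the odd part is $(\alpha_0\otimes\beta_1)\oplus(\alpha_1\otimes\beta_0)$, so by $tr(\gamma\otimes\delta)=tr(\gamma)tr(\delta)$ one gets $str(\alpha\otimes\beta)=(tr\,\alpha_0-tr\,\alpha_1)(tr\,\beta_0-tr\,\beta_1)=str(\alpha)str(\beta)$. Matching the operators $\wedge$ and $1\otimes\tau\otimes 1$ with the ring multiplication $\wedge$ of Lemma 5.15 then yields $str(\widetilde{\varphi^n_{\cE\otimes_{mf}\cF}})=str(\widetilde{\varphi^n_{\cE}})\wedge str(\widetilde{\varphi^n_{\cF}})$ in homology, which is exactly $ch(\cE\otimes_{mf}\cF)=ch(\cE)\wedge ch(\cF)$.

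The delicate point I expect to confront is sign bookkeeping, since the graded tensor product, the Koszul sign in $\tau$, and the two-case formula of Theorem 5.16 all interact. Here the key simplification is Proposition 5.5: $str(At^i)=0$ whenever $i$ is odd. Expanding $str(At^{2m}_{\cE\otimes_{mf}\cF})$ into terms $str(At^k_{\cE}\otimes At^s_{\cF})$ with $k+s=2m$, every summand with $k$ or $s$ odd dies under the supertrace; in particular the problematic ``both $k,s$ odd'' case carrying the extra $-(-1)^{|e|}$ sign in Theorem 5.16 contributes nothing. Only the $k,s$ both even terms survive, and for those the formula is sign-free, so the factorials reorganize as $\frac{1}{(2m)!}\binom{2m}{k}=\frac{1}{k!\,s!}$ and reassemble precisely into the Cauchy product $\bigl(\sum_k \tfrac{1}{k!}str(At^k_{\cE})\bigr)\wedge\bigl(\sum_s \tfrac{1}{s!}str(At^s_{\cF})\bigr)$. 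The main obstacle is thus not the signs themselves but confirming that these identifications, and the supertrace multiplicativity, remain valid at the level of homology classes in $H_0((\Omega^\cdot,(i+j)f,n)_{\mathbb{Z}/2})$ rather than merely on chains; this I would settle using that all the maps involved are strict morphisms and that $str$ is homotopy invariant, so passing to homology is harmless.
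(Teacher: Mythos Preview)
Your proposal is correct and follows essentially the same route as the paper: invoke Theorem 5.16 to write $\widetilde{\varphi^n_{\cE\otimes_{mf}\cF}}=(\wedge\circ(1\otimes\tau\otimes 1))\circ(\widetilde{\varphi^n_{\cE}}\otimes\widetilde{\varphi^n_{\cF}})$ and then apply the supertrace. The paper's own proof is a single sentence to this effect; your write-up simply makes explicit the ingredients that the paper leaves implicit, namely the multiplicativity $str(\alpha\otimes\beta)=str(\alpha)\,str(\beta)$ for degree-zero endomorphisms of $\mathbb{Z}/2$-graded projectives and the use of Proposition 5.5 to kill the odd-indexed cross terms so that the sign in the ``both $k,s$ odd'' case of Theorem 5.16 never enters.
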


\begin{proof} Theorem 5.16 tells us that $\widetilde{\varphi_{\cE\otimes_{mf}\cF}^n}=(\wedge\circ(1\otimes \tau\otimes 1))\circ(\widetilde{\varphi_{\cE}^n}\otimes\widetilde{\varphi_{\cF}^n})$. The corollary follows by applying $str$ to this equation. \end{proof}

\vp{0.5}

\subsection{Functoriality}\hp{1}

\vp{0.1}

Consider a $k$-algebra homomorphism $\varphi: R\rightarrow S$ that sends $f\in R$ to $g\in S$. For any matrix factorization $\cE=(E_1\darrow{A}{B} E_0)\in MF(R,f)$, there is then a naturally induced matrix factorization $$\cE\otimes_{mf} S=(E_1\otimes_R S \darrow{A\otimes 1}{B\otimes 1} E_0\otimes_R S)\in MF(S,g).$$ It is obvious that $E_1\otimes_R S$ and $E_0\otimes_R S$ are finitely generated projective $S$-modules. Also, we do have $(B\otimes 1)\circ(A\otimes 1)=(A\otimes 1)\circ(B\otimes 1)=g\cdot id$: in fact, $(B\otimes 1)\circ(A\otimes 1)(e_1\otimes s)=(f\cdot e_1)\otimes s$, but since we are talking about $S$-modules, $(f\cdot e_1)\otimes s=e_1\otimes (\varphi(f)\cdot s)=e_1\otimes (g\cdot s)=g\cdot(e_1\otimes s)$. 

\begin{defn} 

For a $k$-algebra homomorphism $\varphi$ as above, define a functor $\varphi_*: MF(R,f)\rightarrow MF(S,g)$ that sends $\cE$ to $\varphi_*(\cE):=\cE\otimes_{mf} S$ and a strict morphism $\a=(\a_0,\a_1):\cE\rightarrow\cF$ to a strict morphism $\varphi_*(\a):=(\a_0\otimes 1,\a_1\otimes 1): \varphi_*(\cE)\rightarrow\varphi_*(\cF)$.

\end{defn} 

The functor $\varphi_*$ is well-defined on the homotopy category of matrix factorizations. Also we can talk about $\varphi_*(E)=E\otimes_R S$ for a finitely generated $R$-module $E$ by regarding $E$ as a matrix factorization of zero. In particular, there is a natural map $\mu: \Omega^1_{R/k}\otimes_R S=\varphi_*(\Omega^1_{R/k})\rightarrow\Omega^1_{S/k}$ which sends $d_{R/k}(r)\otimes s$ to $s\cdot d_{S/k}(\varphi(r))$. 

Let us prove a lemma before getting into the statement about functoriality.

\begin{lem} 
For $\varphi$ as above and any finitely generated projective $R$-module $E$, there is a naturally induced connection $\nabla_{E\otimes_R S}:=\mu(\nabla_E\otimes 1)+\sigma(1\otimes d_{S/k})$ for the $S$-module $\varphi_*(E)$, i.e., $$\nabla_{E\otimes_R S}: \varphi_*(E):=E\otimes_R S\rightarrow \Omega^1_{S/k}\otimes_S (E\otimes_R S)\cong \Omega^1_{S/k}\otimes_R E$$ 
\end{lem}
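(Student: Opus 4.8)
The plan is to verify the two assertions hidden in the statement: that the formula $\mu(\nabla_E\otimes 1)+\sigma(1\otimes d_{S/k})$ descends to a well-defined $k$-linear map on $E\otimes_R S$, and that the resulting map satisfies the Leibniz rule for the $S$-module structure on $\varphi_*(E)=E\otimes_R S$. First I would unwind both summands on an elementary tensor $e\otimes s$. Writing $\nabla_E(e)=\sum_i \omega_i\otimes e_i$ and $\bar\omega_i:=\mu(\omega_i\otimes 1)\in\Omega^1_{S/k}$, the first summand sends $e\otimes s$ to $\sum_i (s\bar\omega_i)\otimes e_i$ (apply $\nabla_E$, then push the $\Omega^1_{R/k}$-slot through $\mu$), while the second summand sends it to $d_{S/k}(s)\otimes e$; under the identification $\Omega^1_{S/k}\otimes_S(E\otimes_R S)\cong\Omega^1_{S/k}\otimes_R E$ both land in $\Omega^1_{S/k}\otimes_R E$. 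I would also record that $\mu$ is $R$-linear through $\varphi$ in its $\Omega^1_{R/k}$-slot and $S$-linear in its $S$-slot, both immediate from its defining formula.

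The step I expect to be the real obstacle is well-definedness, because neither summand is individually well-defined on the tensor product over $R$: both $\nabla_E$ and $d_{S/k}$ are only $k$-linear, so $\nabla_E\otimes 1$ and $1\otimes d_{S/k}$ each fail to respect the relation $er\otimes s=e\otimes\varphi(r)s$, and the point is that their failures match. I would therefore first define the assignment $(e,s)\mapsto\sum_i(s\bar\omega_i)\otimes e_i+d_{S/k}(s)\otimes e$ on the product $E\times S$, where it is manifestly $k$-bilinear, and then check it is $R$-balanced. At the representative $(er,s)$ the Leibniz rule $\nabla_E(re)=d_{R/k}(r)\otimes e+r\nabla_E(e)$ makes the first summand acquire the extra term $s\,d_{S/k}(\varphi(r))\otimes e$ on top of $\sum_i(\varphi(r)s\,\bar\omega_i)\otimes e_i$, while the second summand reads $\varphi(r)\,d_{S/k}(s)\otimes e$. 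At the representative $(e,\varphi(r)s)$ the first summand is just $\sum_i(\varphi(r)s\,\bar\omega_i)\otimes e_i$, but now the Leibniz rule $d_{S/k}(\varphi(r)s)=\varphi(r)\,d_{S/k}(s)+s\,d_{S/k}(\varphi(r))$ makes the second summand read $\varphi(r)\,d_{S/k}(s)\otimes e+s\,d_{S/k}(\varphi(r))\otimes e$. The two values therefore coincide, the single extra term $s\,d_{S/k}(\varphi(r))\otimes e$ being produced once by $\nabla_E$ and once by $d_{S/k}$. Hence the assignment descends to a well-defined $k$-linear map $\nabla_{E\otimes_R S}\colon E\otimes_R S\to\Omega^1_{S/k}\otimes_R E$.

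Finally I would verify the Leibniz rule, which is now routine. For $t\in S$ one has $t\cdot(e\otimes s)=e\otimes ts$, and expanding $d_{S/k}(ts)=t\,d_{S/k}(s)+s\,d_{S/k}(t)$ yields
$$\nabla_{E\otimes_R S}(e\otimes ts)=\sum_i(ts\,\bar\omega_i)\otimes e_i+t\,d_{S/k}(s)\otimes e+s\,d_{S/k}(t)\otimes e.$$
On the other hand, under $\Omega^1_{S/k}\otimes_S(E\otimes_R S)\cong\Omega^1_{S/k}\otimes_R E$ the term $d_{S/k}(t)\otimes(e\otimes s)$ becomes $s\,d_{S/k}(t)\otimes e$, while $t\,\nabla_{E\otimes_R S}(e\otimes s)=\sum_i(ts\,\bar\omega_i)\otimes e_i+t\,d_{S/k}(s)\otimes e$, so the two sides of $\nabla_{E\otimes_R S}(t\cdot(e\otimes s))=d_{S/k}(t)\otimes(e\otimes s)+t\,\nabla_{E\otimes_R S}(e\otimes s)$ agree termwise. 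This shows $\nabla_{E\otimes_R S}$ is a connection on $\varphi_*(E)$. The only genuinely delicate book-keeping is the cancellation in the well-definedness step; the $k$-linearity and the Leibniz verification are formal consequences of the defining formulas for $\mu$ and $d_{S/k}$.
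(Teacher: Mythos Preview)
Your proposal is correct and follows essentially the same approach as the paper: establish that the formula descends to $E\otimes_R S$, then verify the Leibniz rule by direct computation on elementary tensors. The only cosmetic difference is that the paper phrases the first step as defining the map on $E\otimes_k S$ and asserting (without writing out the details) that it is ``$R$-linear'' so as to factor through $E\otimes_R S$, whereas you work with the $k$-bilinear map on $E\times S$ and explicitly carry out the $R$-balancedness check that the paper leaves to the reader; your version is thus slightly more complete on exactly the point you flagged as the delicate one.
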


\begin{proof}

First, notice that we have the following two compositions: $$E\otimes_k S\xrightarrow{\nabla_E\otimes 1}(\Omega^1_{R/k}\otimes_R E)\otimes_k S\cong \Omega^1_{R/k}\otimes_R (E\otimes_k S)\cong (\Omega^1_{R/k}\otimes_R S)\otimes_k E\xrightarrow{\mu}\Omega^1_{S/k}\otimes_k E$$ and $$E\otimes_k S\xrightarrow{1\otimes d_{S/k}} E\otimes_R\Omega^1_{S/k}\xrightarrow{\sigma}\Omega^1_{S/k}\otimes_k E$$

Let's denote the some of the above two compositions by $\nabla_{E\otimes_k S}$. It is obvious that they are both $k$-linear, one can also show that $\nabla_{E\otimes_k S}$ is in fact $R$-linear by checking directly. Hence we get an induced map: $$\xymatrix{E\otimes_k S \ar[r]^{\nabla_{E\otimes_k S}} \ar@{->>}[d] & \Omega^1_{S/k}\otimes_k E \ar@{->>}[r]  & \Omega^1_{S/k}\otimes_R E\\  E\otimes_R S \ar@{-->}[ru]  & & }$$ which we denote by $\nabla_{E\otimes_R S}$.

Now the only thing left to check is that $\nabla_{E\otimes_R S}$ satisfies the Leibniz rule, i.e., $$\nabla_{E\otimes_R S}(s\cdot(e\otimes s'))=d_{S/k}(s)\otimes(e\otimes s')+s\cdot\nabla_{E\otimes_R S}(e\otimes s'),$$ for any $e\in E, s,s'\in S$.

Let's prove it using the same technique as in the proof of Theorem 5.16. Say $\nabla_E(e)=d_{R/k}(r)\otimes e'\in\Omega^1_{R/k}\otimes_R E$, $$\nabla_{E\otimes_R S}(s\cdot(e\otimes s'))=\nabla_{E\otimes_R S}(e\otimes ss')$$ $$=\mu(\nabla_E(e)\otimes ss')+\sigma(e\otimes d_{S/k}(ss'))$$ $$=\mu(d_{R/k}(r)\otimes e'\otimes ss')+(d_{S/k}(s)\cdot s'+s\cdot d_{S/k}(s'))\otimes e$$ $$=ss'\cdot d_{S/k}(\varphi(r))\otimes e'+(d_{S/k}(s)\cdot s'+s\cdot d_{S/k}(s'))\otimes e.$$ 

Meanwhile, $$d_{S/k}(s)\otimes(e\otimes s')+s\cdot\nabla_{E\otimes_R S}(e\otimes s')=s'\cdot d_{S/k}(s)\otimes e+s\cdot(\mu(\nabla_E\otimes 1)(e\otimes s')+\sigma(1\otimes d_{S/k})(e\otimes s'))$$ $$=s'\cdot d_{S/k}(s)\otimes e+s\cdot(\mu(\nabla_E(e)\otimes s')+d_{S/k}(s')\otimes e)$$ $$=s'\cdot d_{S/k}(s)\otimes e+s\cdot d_{S/k}(s')\otimes e+s\cdot\mu(d_{R/k}(r)\otimes e'\otimes s')$$ $$=s'\cdot d_{S/k}(s)\otimes e+s\cdot d_{S/k}(s')\otimes e+s\cdot(s'd_{S/k}(\varphi(r))\otimes e').$$ \end{proof}

For simplicity (and to make future calculations easier), we follow the usual convention of denoting $\nabla_{E\otimes_R S}$ by $\nabla_E\otimes 1+1\otimes d_{S/k}$. Now we can state and prove

\begin{prop}[Functoriality]

Under the above hypotheses and the extra condition that both $R$ and $S$ are smooth $k$-algebras with the same relative dimension $n$, we have $\varphi_*\circ ch=ch\circ\varphi_*$.

\end{prop}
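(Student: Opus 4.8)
The plan is to reduce the statement to the single identity $str(At^{i}_{\varphi_*\cE})=\varphi_*\!\bigl(str(At^{i}_{\cE})\bigr)$ in $\Omega^{i}_{S/k}$ for every $i$, and then to sum these with the coefficients $1/i!$. First I make precise the map $\varphi_*$ on the targets of the two Chern characters. The natural map $\mu\colon\Omega^{1}_{R/k}\otimes_{R}S\to\Omega^{1}_{S/k}$ extends multiplicatively to exterior powers, and the composite $\Omega^{\bullet}_{R/k}\xrightarrow{-\otimes 1}\Omega^{\bullet}_{R/k}\otimes_{R}S\xrightarrow{\mu}\Omega^{\bullet}_{S/k}$ carries $df\wedge$ to $dg\wedge$, because $\mu(df\otimes 1)=d_{S/k}(\varphi(f))=dg$ and $\mu$ is multiplicative. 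Hence it is a map of complexes $(\Omega^{\bullet},df,n)\to(\Omega^{\bullet},dg,n)$, where the hypothesis that $R$ and $S$ have the \emph{same} relative dimension $n$ guarantees that both complexes are truncated in the same place. After $\mathbb{Z}/2$-folding this induces the map $\varphi_{*}\colon H_{0}((\Omega^{\bullet},df,n)_{\mathbb{Z}/2})\to H_{0}((\Omega^{\bullet},dg,n)_{\mathbb{Z}/2})$ appearing on the left-hand side of the proposition.

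Next, since $ch$ is independent of the chosen connection up to homotopy, I am free to compute both Chern characters with whatever connections are convenient. The asserted identity is one of differential forms in $\Omega^{i}_{S/k}$; since these inject into a suitable localization and $E$ is locally free, I may argue exactly as in the proof of Proposition 5.5 and reduce to the case where $E=E_{0}\oplus E_{1}$ is a free $R$-module, so that $\cE=(R^{r}\xrightarrow{A}R^{r}\xrightarrow{B}R^{r})$. I choose the de Rham connection $\nabla_{E}=d_{R/k}$. By the induced-connection lemma above, the resulting connection on $\varphi_*\cE=(S^{r}\xrightarrow{\varphi(A)}S^{r}\xrightarrow{\varphi(B)}S^{r})$ is the de Rham connection $d_{S/k}$. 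Example 4.7 then gives $At_{\cE}$ with off-diagonal blocks $dA$ and $dB$, and likewise $At_{\varphi_*\cE}$ with off-diagonal blocks $d_{S/k}(\varphi(A))$ and $d_{S/k}(\varphi(B))$; since $d_{S/k}\circ\varphi=\mu\circ(d_{R/k}\otimes 1)$ entrywise, these match after applying $\mu$, i.e. $At_{\varphi_*\cE}=\mu(At_{\cE}\otimes 1)$.

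Because $\mu$ is multiplicative, products of the blocks map to products under $\mu$, so iterating gives $At^{i}_{\varphi_*\cE}=\mu(At^{i}_{\cE}\otimes 1)$ entrywise, under the identification $\operatorname{End}_S(E\otimes_R S)\cong\operatorname{End}_R(E)\otimes_R S$ valid for finitely generated projective $E$. The supertrace is read off the diagonal blocks, hence commutes with the entrywise application of $\mu$, yielding $str(At^{i}_{\varphi_*\cE})=\mu\bigl(str(At^{i}_{\cE})\otimes 1\bigr)=\varphi_*\bigl(str(At^{i}_{\cE})\bigr)$ in $\Omega^{i}_{S/k}$. Summing the defining formula $ch(\cE)=\sum_{i=0}^{n}\frac{1}{i!}str(At^{i}_{\cE})$, and using that $\varphi_*$ is additive and that both sums run to the common $n$, gives $\varphi_*(ch(\cE))=\sum_{i=0}^{n}\frac{1}{i!}str(At^{i}_{\varphi_*\cE})=ch(\varphi_*\cE)$, which is the assertion $\varphi_*\circ ch=ch\circ\varphi_*$.

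I expect the main obstacle to be the second step: justifying the reduction to free $E$ so that Example 4.7 applies, and verifying that under this reduction the abstractly defined map $\varphi_*$ on homology is genuinely induced by the entrywise $\mu$ appearing in the block computation. One must also keep track of the canonical identifications $\Omega^{1}_{S/k}\otimes_{S}(E\otimes_{R}S)\cong\Omega^{1}_{S/k}\otimes_{R}E$ and $\operatorname{End}_S(E\otimes_R S)\cong\operatorname{End}_R(E)\otimes_R S$, and confirm that the induced connection of the preceding lemma really specializes to $d_{S/k}$ in the free de Rham case, so that Example 4.7 may be invoked over $S$. Once these compatibilities are checked, the remaining bookkeeping—including the sign conventions in the matrix-factorization tensor products—is routine.
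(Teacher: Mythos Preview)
Your argument is correct and rests on the same key identity as the paper, namely $At_{\varphi_*\cE}=\varphi_*(At_{\cE})$ when the connection on $E\otimes_R S$ is the one induced from $\nabla_E$ via Lemma~5.20. The paper, however, carries out this computation directly for an arbitrary connection $\nabla_E$ on a projective $E$: writing $\nabla_{E\otimes_R S}=\nabla_E\otimes 1+1\otimes d_{S/k}$ and the differential $d_{\varphi_*\cE}=d_{\cE}\otimes 1$, one gets $[\nabla_E\otimes 1,d_{\cE}\otimes 1]=At_{\cE}\otimes 1$ while $[1\otimes d_{S/k},d_{\cE}\otimes 1]=0$, so $At_{\varphi_*\cE}=At_{\cE}\otimes 1$ on the nose, with no localization or reduction to free modules needed. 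Your detour through Proposition~5.5 and Example~4.7 is therefore unnecessary, and it is also the part of your write-up that carries the most risk (you would have to check that the supertrace identity is local on $\Spec R$ and that the chosen localizations are compatible with $\varphi$). On the plus side, you spell out explicitly what $\varphi_*$ means on $H_0((\Omega^\bullet,df,n)_{\mathbb{Z}/2})$ and why it is well defined, which the paper leaves implicit.
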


\begin{proof}

By our formula for the Chern character, it's enough to show $\varphi_*(At(\cE))=At(\varphi_*(\cE))$ for a matrix factorization $\cE$.

By Lemma 5.20, choose the nature connection $\nabla_E\otimes 1+1\otimes d_{S/k}$ for $E\otimes_R S$, so the Atiyah class of $\varphi_*(\cE)=\cE\otimes_{mf} S$ is 

$$\begin{bmatrix} \nabla_E\otimes 1+1\otimes d_{S/k} & \\ & \nabla_E\otimes 1+1\otimes d_{S/k}\end{bmatrix}\begin{bmatrix} & A\otimes 1\\ B\otimes 1 & \end{bmatrix} - $$ 

$$\begin{bmatrix} & A\otimes 1\\ B\otimes 1 & \end{bmatrix}\begin{bmatrix} \nabla_E\otimes 1+1\otimes d_{S/k} & \\ & \nabla_E\otimes 1+1\otimes d_{S/k}\end{bmatrix}$$

$$=\begin{bmatrix} & (\nabla_E A-A\nabla_E)\otimes 1 \\ (\nabla_E B-B\nabla_E)\otimes 1 & \end{bmatrix}$$ The Atiyah class of $\cE$ is $$\begin{bmatrix} & \nabla_E A-A\nabla_E \\ \nabla_E B-B\nabla_E & \end{bmatrix}$$ Now is obvious from the definition of $\varphi_*$ on strict morphisms that $\varphi_*(At(\cE))=At(\varphi_*(\cE))$.\end{proof}

\section*{Acknowledgements}
This is part of the author's PhD thesis written under the supervision of Professor Mark Walker at the University of Nebraska-Lincoln. I would like to thank him for his help and advice of this article. 

\bibliographystyle{plain}
\bibliography{Refs}

\vp{0.2}

\emph{\footnotesize{Department of Mathematics, University of Nebraska-Lincoln, Lincoln NE 68588 USA}} 

\emph{\footnotesize{Current address: Departement Wiskunde-Informatica, Universiteit Antwerpen, Middelheimlaan 1, 2020 Antwerpen, Belgium}}

\emph{\footnotesize{E-mail address: xuan.yu@uantwerpen.be}}

\end{document}